\documentclass[oneside, book,11 pt]{amsart}
\usepackage{color}
\usepackage{cancel}
\usepackage{amsmath}
\usepackage{amssymb}
\usepackage[normalem]{ulem}
\usepackage{amsfonts}
\usepackage{bbm}
\usepackage{amsthm}
\usepackage{enumerate}
\usepackage[mathscr]{eucal}
\bibliographystyle{plain}
\usepackage{graphicx}
\usepackage{pstricks}
\usepackage[active]{srcltx}
\usepackage{fancyhdr}
\usepackage{verbatim}
\usepackage{fullpage}
\usepackage{hyperref}
\usepackage{mathtools}
\hypersetup{
colorlinks = true, 
urlcolor = blue, 
linkcolor = red, 
citecolor = blue 
}
\usepackage{caption}
\usepackage{float}

\pagestyle{fancy}
\fancyhf{}
\fancyfoot[C]{\footnotesize \thepage\ }


\newtheorem{thm}{Theorem}[section]
\newtheorem*{thm*}{Theorem}
\newtheorem{lemma}[thm]{Lemma}
\newtheorem{corollary}[thm]{Corollary}
\newtheorem{prop}[thm]{Proposition}
\newtheorem*{prop*}{Proposition}

\numberwithin{equation}{section}

\title{Mixing Properties for Hom-shifts and the Distance Between Walks on Associated Graphs}
\author{
Nishant Chandgotia 
\and
Brian Marcus
}\address{School of Mathematical Sciences\\
Tel Aviv University,
Israel}
\email {nishant.chandgotia@gmail.com}

\address{
Department of Mathematics\\
University of British Columbia,
Canada }
\email{marcus@math.ubc.ca}
\subjclass[2010]{Primary 37B10; Secondary 68R10, 82B20}
\keywords{Walks on graphs, folding, block-gluing, symbolic dynamics, strong irreducibility, universal covers}
\def\F{{\mathcal F}}
\def\A{{\mathcal A}}
\def\N{\mathbb N}

\newcommand{\Z}{\mathbb{Z}}
\def \B{\mathcal B}

\def \E{\mathbb E}
\def \R{\mathbb R}
\def \G{\mathcal G}
\def \V{\mathcal V}
\def \E1{\mathcal E}

\def \L{\mathcal L}

\def\t{\tilde}
\def \m{\vec}
\def \H{\mathcal H}

\def\mi{{\vec{i}}}
\def\mj{{\vec{j}}}
\def\l{{\langle}}
\def\r{{\rangle}}
\newcommand{\Mod}[1]{\ (\text{mod}\ #1)}
\begin{document}
\maketitle
\begin{abstract} Let $\H$ be a finite connected undirected graph and $\H^2_{walk}$ be the graph of bi-infinite walks on $\H$; two such walks $\{x_i\}_{i\in \Z}$ and $\{y_i\}_{i \in \Z}$ are said to be adjacent if $x_i$ is adjacent to $y_i$ for all $i \in \Z$. We consider the question: Given a graph $\H$ when is the diameter (with respect to the graph metric) of $\H^2_{walk}$ finite? Such questions arise while studying mixing properties of hom-shifts (shift spaces which arise as the space of graph homomorphisms from the Cayley graph of $\Z^d$ with respect to the standard generators to $\H$) and are the subject of this paper. 
\end{abstract}

\section{Introduction}

Let $\A$ be a finite set called the alphabet. A shape is a finite subset of $\Z^d$ and a pattern is a function from a shape to the alphabet $\A$. Given a finite set of patterns $\F$ called a forbidden list, a shift of finite type (SFT) $X_\F\subset \A^{\Z^d}$ is the set of configurations in which patterns from $\F$ and their translates do not appear. There is a natural topology on $X_\F$ coming from the product of the discrete topology on $\A$ making it a compact metrisable space; $\Z^d$ acts on it by translation of configurations making it a dynamical system. The study of SFTs for $d\geq 2$ is rife with numerous undecidability issues. It is not even decidable if an SFT is non-empty \cite{bergerundecidable}. It follows immediately that most non-trivial properties of SFTs are undecidable (Proposition \ref{prop: undecidability of transitivity/block-gluing}). In this paper we study an important class of SFTs called hom-shifts, for which, a priori many such issues do not arise.

By $\Z^d$ we will mean both the group and its Cayley graph with respect to standard generators. Given any SFT $X_\F$, we can assume by a standard recoding argument that $X_\F$ is in fact a nearest neighbour SFT (possibly for a different alphabet $\A$), meaning $\F$ consists of patterns on edges and vertices of $\Z^d$. Let $Hom(\G,\H)$ denote the set of all graph homomorphisms from $\G$ to $\H$. An SFT $X$ is called a hom-shift if $X=Hom(\Z^d, \H)$ for some graph $\H$; it is denoted by $X^d_\H$. Alternatively, a hom-shift can be described as a nearest neighbour SFT which is `symmetric' and `isotropic', that is, if $v,w\in \A$ are forbidden to sit next to each other in some coordinate direction, then they are forbidden to sit next to each other in all coordinate directions. It follows that a hom-shift $X^d_\H$ is non-empty if and only if $\H$ has at least one edge. An introduction to SFTs and hom-shifts can be found in Section \ref{Section:SFTs, Hom-Shifts and Mixing Conditions}.

Many important SFTs arise as hom-shifts like the hard square shift and the $n$-coloured chessboard. In this paper we study certain mixing properties of hom-shifts: topological mixing, block-gluing and strong irreducibility and relate them to some natural questions in graph theory. The mixing conditions studied in this paper are introduced in Section \ref{Section: Some Mixing Conditions for Hom-Shifts}. For further background consider \cite{boyle2010multidimensional}.

An SFT $X$ is said to be topologically mixing (or just mixing) if any two patterns appearing in $X$ can coappear in a configuration in $X$ provided the corresponding shapes are far enough apart (the distance depending on the patterns). Clearly, a hom-shift $X^d_\H$ is not mixing if $\H$ is bipartite; the pattern on any partite class of $\Z^d$ is mapped into a partite class of $\H$. It turns out that this is essentially the only obstruction. We prove in Proposition \ref{prop: transitivity hom-shifts} that a hom-shift $X^d_\H$ is mixing if and only if $\H$ is a connected undirected graph which is not bipartite; further if $\H$ is bipartite then it still satisfies a similar mixing condition but we may need to translate one of the two patterns by a unit coordinate vector.  In the heart of the analysis is the following simple idea: We say that two finite walks, $\{v_i\}_{i=1}^n$ and $\{w_i\}_{i=1}^n$ are adjacent if $v_i$ is adjacent to $w_i$ for all $i$. We show that for all $n$ and finite connected graphs $\H$, the graph of finite walks of length $n$ is connected. 

However we find that the diameter of the graph of finite walks on a graph $\H$ of length $n$ might increase with $n$. Whether the diameter remains bounded or not relates to another important mixing property called the phased block-gluing property: We say that an SFT $X$ is block-gluing if there is an $n\in \N$ such that any two patterns on rectangular shapes in $X$ can coexist in a configuration in $X$ provided that they are separated by distance $n$. Strong irreducibility (SI) is a similar (though a much stronger) mixing property where there is no restriction on the shape of the patterns.

Again we observe that if the graph $\H$ is bipartite then $X^d_\H$ is neither block-gluing nor SI. To remedy the situation we introduce the phased block-gluing and the phased SI properties in Section \ref{section: The Block-Gluing Property for Hom-Shifts} which are similar to the usual block-gluing and SI properties but there is a fixed finite set $S\subset \Z^d$ by elements of which we are allowed to translate one of the two patterns. We prove in Propositions \ref{prop:fromshifttowalk} and \ref{prop:Phaed SI} that if $\H$ is not bipartite and $X^d_\H$ is phased block-gluing/phased SI then it is block-gluing/SI respectively. Further if $\H$ is bipartite and $X^d_\H$ is phased block-gluing/phased SI then the set $S$ can be chosen to be the origin and any of the coordinate unit vectors. This is done by relating the mixing conditions with some natural graph theoretic questions. 

The study of the phased block-gluing property for the $d$-dimensional shift space $X^d_\H$ relates to a natural graph structure on $X^{d-1}_\H$: $x, y\in X^{d-1}_\H$ are said to be adjacent if $x_{\mi}$ is adjacent to $y_{\mi}$ for all $\mi$ in $\Z^{d-1}$. Denote the graph thus obtained by $\H^d_{walk}$. In Proposition \ref{prop:fromshifttowalk} we prove that $X^d_{\H}$ is phased block-gluing if and only if the diameter of $\H^d_{walk}$ is finite.

It can be proved using the ideas of graph folding in \cite{Nowwinkler,brightwell2000gibbs} that if $\H$ is a tree then the space $X^d_\H$ is phased SI. This turns out to be a characterisation for the phased SI property for a large class of graphs: A graph is called four-cycle free if it is connected, it has no self-loops and the four-cycle, $C_4$ is not a subgraph. In Section \ref{section:Phased Mixing Properties for $C_4$-Hom-Free Graphs} we prove for four-cycle free graphs $\H$ that $X^d_\H$ is phased block-gluing/phased SI if and only if $\H$ is a tree. Surprisingly the proof goes via lifts to the universal cover of the graph; in fact following \cite{Marcinfourcyclefree2014} we prove the results for a more general class of graphs called the four-cycle hom-free graphs (defined in Section \ref{section:Phased Mixing Properties for $C_4$-Hom-Free Graphs}). In Subsection \ref{subsection:Why is Four-Cycle Hom-Free Condition Necessary} we discuss why this characterisation fails when the four-cycle hom-free restriction is removed. The paper concludes with a long list of open questions (Section \ref{section: open question}).

Let us summarise. Results regarding decidability among hom-shifts and shifts of finite type are Proposition \ref{Proposition: decidability to periodic dense conjugacy}, Corollary \ref{corollary: decidability conjugate to hom-shift} and Proposition \ref{prop: undecidability of transitivity/block-gluing}; in Subsections \ref{subsection:Decidability of Block-Gluing Distance} and \ref{subsection: homshift sft conjugate} we mention some related open questions. In the proof of Proposition \ref{prop: transitivity hom-shifts}  and in Proposition \ref{prop:fromshifttowalk} we reformulate transitivity, mixing and block-gluing in terms of walks on graphs. Proposition \ref{prop: transitivity hom-shifts} gives necessary and sufficient conditions for transitivity and mixing. Section \ref{section:Phased Mixing Properties for $C_4$-Hom-Free Graphs} discusses the mixing properties for hom-shifts where the corresponding graph is four-cycle hom-free.

We end the introduction with the question which is the cornerstone for this line of research; this we are unable to address. For a more detailed discussion, look at Subsection \ref{subsection:Decidability of Block-Gluing Distance}.
\\

\noindent\textbf{Question:} Is it decidable whether a hom-shift is SI/block-gluing?
\section{SFTs and Hom-Shifts}\label{Section:SFTs, Hom-Shifts and Mixing Conditions}

Let $\A$ be a finite set which we refer to as the \emph{alphabet} with the discrete topology; we give the set $\A^{\Z^d}$ the product topology making it a compact metrizable space. By $\Z^d$ we will mean both the Cayley graph of $\Z^d$ with respect to standard generators and the group. The elements of $\A^{\Z^d}$ are called \emph{configurations} while elements of $\A^{B}$ for some finite set $B$ are called \emph{patterns}. 
Usually configurations will be denoted by letters like $x, y$ and $z$ while patterns will be denoted by letters like $a, b$ and $c$. 
Given a configuration $x$, let $x_\mi:=x(\mi)$ and a pattern $a\in \A^B$ and $\mi\in B$, let $a_\mi:=a(\mi)$.

There is a natural action of $\Z^d$ on $\A^{\Z^d}$: For all $\mi \in \Z^d$ let 

$$\sigma^\mi:\A^{\Z^d}\longrightarrow \A^{\Z^d}\text{ given by }\left(\sigma^\mi(x)\right)_{\mj}:=x_{\mi+ \mj}$$ denote the \emph{shift-action}. A \emph{shift space} is a closed set of configurations $X\subset \A^{\Z^d}$ which is invariant under the shift-action, meaning, $\sigma^\mi(X)=X$ for all $\mi \in \Z^d$. Alternatively, it can also be defined using forbidden patterns: A set of configurations $X$ is a shift space if and only if there is a set of patterns $\F$ such that 
$$X=X_\F:=\left\{x\in \A^{\Z^d}~:~\text{ patterns from }\F\text{ do not appear in any shift of }x\right\}.$$ 
Look at \cite[Chapter 6]{LM} for the proof of the equivalence when $d=1$; the proof is similar in higher dimensions. In a similar fashion the shift map extends to patterns:
$$\sigma^\mi:\A^{F}\longrightarrow \A^{F-\mi}\text{ given by }\left(\sigma^\mi(a)\right)_{\mj}:=x_{\mi+ \mj}\text{ for }F\subset \Z^d\text{ and }\mj\in F-\mi$$
 Let $\m 0$ be the origin and $\{\m e_1^d, \m e_2^d, \ldots, \m e^d_d \}$ denote the standard generators of $\Z^d$. We will drop the superscript when it is obvious from the context. Given $a, b\in \A$ we denote by $\langle a,b \rangle^{i}\in \A^{\{\m 0, \m e_i\}}$ the pattern
$$\langle a,b\rangle ^i_{\m 0}:= a, \langle a,b \rangle ^i_{\m e_i}=b.$$

Let us look at a few examples:
\begin{enumerate}
\item
Let $\A=\{0,1\}$ and $\F=\{\langle1,1\rangle ^i~:~1\leq i \leq d\}.$ Then 
$$X_\F=\{x\in \{0,1\}^{\Z^d}~:~\text{ no two appearances of }1\text{ in }x\text{ are adjacent}\}.$$ 
This is called the \emph{hard square shift.}
\item 
Let $\A=\{1,2,\ldots, n\}$ and $\F=\{\langle j,j\rangle ^i~:~1\leq i \leq d, 1\leq j\leq n\}.$
Then 
$$X_\F=\{x\in \{1,2, \ldots, n\}^{\Z^d}~:~\text{ adjacent symbols}\text{ in }x\text{ are distinct}\}.$$ 
This is called the \emph{$n$-coloured chessboard}.
\item
Let $d=1$, $\A=\{0,1\}$ and $\F=\{10^{2i-1}1~:~i\in \Z\}$. Then 
$$X_\F=\{x\in \{0,1\}^\Z~:~\text{ the separation between successive }1\text{'s is even}\}.$$
This is called the \emph{even shift}. 

\end{enumerate}•

Note that in the hard square shift the forbidden list $\F$ consists of $d$ elements while in the even shift the forbidden list $\F$ consists of infinitely many elements. It can be in fact proven that $\F$ cannot be chosen finite for the even shift. 

A shift space $X$ is called a \emph{shift of finite type (SFT)} if there exists a finite set of forbidden patterns $\F$ such that $X=X_\F$. Thus the hard square shift is an SFT while the even shift is not an SFT. Further if $\F$ can be chosen to be a set of patterns on edges and vertices of $\Z^d$ then $X$ is called a \emph{nearest neighbour shift of finite type}. Any SFT can be ``recoded'' into a nearest neighbour SFT: Given shift spaces $X$ and $Y$, a continuous map $f: X\longrightarrow Y$ which commutes with the shift-action, that is, $f\circ \sigma^\mi=\sigma^\mi\circ f$ is called a \emph{sliding block code}. A \emph{factor map} is a sliding block code which is surjective while a \emph{conjugacy} is a sliding block code which is bijective. The inverse of a conjugacy is also a conjugacy; thus conjugacies determine an equivalence relation. Any shift space conjugate to an SFT is also an SFT. Further given an SFT $X$, a simple construction gives us a nearest neighbour SFT, $Y$ which is conjugate to $X$ \cite{schimdt_fund_cocycle_98}. 

A \emph{periodic configuration} is a configuration $x\in \A^{\Z^d}$ such that there exists some $n \in \N$ such that $\sigma^{n\m e_i}(x)=x$ for all $1\leq i\leq d$. Some fundamental properties of nearest neighbour SFTs are undecidable for $d\geq 2$; for instance there is no algorithm to decide, given a finite set $\F$ whether $X_\F$ is non-empty \cite{bergerundecidable, Robinson1971}. Let us review a few salient features of the proof: Fix $d\geq 2$. Given a Turing machine $T$ there is a finite alphabet $\A_T$ and a finite forbidden list $\F_T$ such that $X^d_{\F_T}$ is non-empty if and only if $T$ does not halt starting on the empty input. Since the halting problem for Turing machines is undecidable, the non-emptiness problem for SFTs (and hence nearest neighbour SFTs) is also undecidable. Further $X^d_{\F_T}$ has no periodic configurations; this shall be useful later.

All the graphs $\H$ in this paper are undirected, without multiple edges and have no isolated vertices.

$X\subset \A^{\Z^{d}}$ is called a \emph{hom-shift} if there exists a finite undirected graph $\H$ such that $X= Hom(\Z^d,\H)$. Alternatively, these are exactly the nearest neighbour SFTs which are symmetric and isotropic, meaning nearest neighbour SFTs which are invariant under the automorphism group of $\Z^d$ (as a graph). These correspond to vertex shifts in $d=1$ defined by an undirected graph \cite[Chapter 2]{LM}.

For an undirected graph $\H$ (finite or not) we denote 
$$X^d_\H:=Hom(\Z^d, \H).$$
Clearly $X^d_\H$ is non-empty if and only if $\H$ is non-empty. Let $K_n$ denote the complete graph on $n$ vertices $\{1, 2, 3, \ldots, n\}$. Then $X_{K_n}^d$ is the $n$-coloured chessboard. If $\H$ is the graph given by Figure \ref{figure:hardsquaregraph} then $X^d_\H$ is the hard square shift.

\begin{figure}
\includegraphics[angle=0,
width=.15\textwidth]{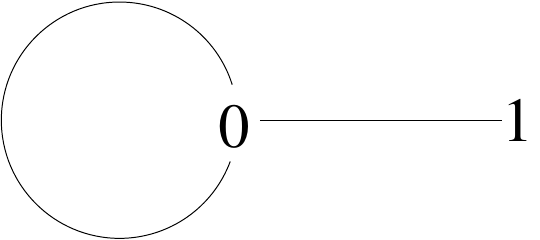}
\caption{Graph for the hard square shift\label{figure:hardsquaregraph}}
\end{figure}

We shall frequently use the cartesian product on graphs: Given graphs $\H_1=(\V_1, \E1_1)$ and  $\H_2=(\V_2,\E1_2)$, $\H_1\square \H_2$ is the graph with vertex set $\V_1\times \V_2$ where $(v_1, v_2)\sim_{\H_1\square \H_2}(w_1, w_2)$ if and only if $v_1=w_1$ and $v_2\sim_\H w_2$ or $v_1\sim_\H w_1$ and $v_2=w_2$. By $\square^r_{j=1} \H_j$ we mean the graph $\H_1\square \H_2\square\ldots\square\H_r$.

For a shift space $X\subset \A^{\Z^d}$, the \emph{language} for $X$ is given by
$$\L(X):=\{a\in \A^B~:~ N\subset \Z^d \text{ is finite and there exists }x\in X \text{ such that }x|_B=a\}.$$
These are called the set of \emph{globally-allowed patterns} in $X$. On the other hand, if the shift space $X$ is given by a forbidden list $\F$, then a pattern $a$ is called \emph{locally-allowed} if no element of $\F$ appears in the shifts of $a$. For shifts of finite type, it is not decidable whether a locally-allowed pattern is globally-allowed \cite{Robinson1971}. For hom-shifts, it is in fact decidable; this follows from Proposition \ref{Proposition: periodic points in Hom shifts}.

A \emph{shape} is a finite subset of $\Z^d$. For a shape $A\subset \Z^d$ we write $\L_A(X):=\L(X)\cap \A^A$. We will often denote an element $a\in \A^A$ by $\langle a\rangle_A$ instead to emphasise the domain of the pattern. By a \emph{rectangular shape} $A\subset \Z^d$ we mean that $A=\square^d_{j=1} I_j$ for some finite intervals $I_j\subset \Z$. A \emph{rectangular pattern} in $X$ is a pattern in $\L_A(X)$ for some rectangular shape $A$. The following proposition implies that periodic configurations are dense in hom-shifts.

\begin{prop}[Extension of (Possibly Infinite) Rectangular Patterns]\label{Proposition: periodic points in Hom shifts}
Let $\H$ be an undirected graph and $A=\square_{t=1}^d I_t$ where $I_t$'s are intervals in $\Z$. Then for all homomorphisms $a\in Hom(A, \H)$ there exists a configuration $x\in X^d_\H$ such that $x|_A=a$. If $A$ is a finite set then $x$ can be chosen to be periodic.
\end{prop}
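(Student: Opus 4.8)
The plan is to extend $a$ to a homomorphism $x\colon\Z^d\to\H$ by \emph{folding} the box $A$ onto itself inside $\Z^d$, and to treat separately any coordinate direction in which $A$ has width one (where folding is impossible). Reindex the coordinates so that $I_1,\dots,I_k$ each have at least two elements while $I_{k+1}=\{m_{k+1}\},\dots,I_d=\{m_d\}$ are singletons.

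\textbf{Folding the thick directions.} For each $t\le k$ choose the folding map $r_t\colon\Z\to I_t$ determined by $r_t|_{I_t}=\mathrm{id}$ and $|r_t(j)-r_t(j+1)|=1$ for all $j\in\Z$: when $I_t=[l,u]$ is finite this is the triangular wave of period $2(u-l)$; when $I_t=[l,\infty)$ it is $j\mapsto l+|j-l|$, and symmetrically for $(-\infty,u]$; when $I_t=\Z$ it is the identity. Let $J_t=\Z$ for $t\le k$ and $J_t=\{m_t\}$ for $t>k$, and define $\phi\colon\square_{t=1}^{d}J_t\to A$ by $\phi(\mi)_t=r_t(\mi_t)$ for $t\le k$ and $\phi(\mi)_t=\mi_t$ otherwise. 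Two adjacent points of $\square_{t=1}^{d}J_t$ differ by a unit vector in some \emph{thick} direction (a width-one direction carries no edges), and $\phi$ moves that coordinate by $\pm1$ while keeping it inside $I_t$ and fixing the others; hence $\phi$ is a graph homomorphism into $A$, and $\phi|_A=\mathrm{id}$ since each $r_t$ fixes $I_t$. Therefore $b:=a\circ\phi$ is a homomorphism on $\square_{t=1}^{d}J_t$ extending $a$, and if every $I_t$ is finite it is periodic, with period $2(|I_t|-1)$ in each thick direction $t$.

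\textbf{Filling in the width-one directions.} If $k=0$ then $A$ is a single vertex $v_0$; since $\H$ has no isolated vertex I pick a neighbour $v_1$ of $v_0$ and set $x(\mi)=v_0$ if $\sum_t\mi_t\equiv\sum_t m_t\Mod{2}$ and $x(\mi)=v_1$ otherwise, a $2$-periodic homomorphism on $\Z^d$ extending $a$. If $k\ge1$, I process $t=k+1,\dots,d$ in turn, at each stage extending the homomorphism built so far (starting from $b$): given a homomorphism $g$ defined on $\Z^{t-1}\times\{m_t\}\times\cdots\times\{m_d\}$, extend it to $\Z^{t}\times\{m_{t+1}\}\times\cdots\times\{m_d\}$ by
\[
\mi\longmapsto g\bigl(\mi-(\mi_t-m_t)\m e_t+(\mi_t-m_t)\m e_1\bigr).
\]
An edge of the new domain in direction $1$ or $t$ maps to an edge in direction $1$ of the domain of $g$, and an edge in any other direction maps to an edge in the same direction, so the extension is again a homomorphism; it agrees with $g$ when the $t$-th coordinate equals $m_t$; and any period of $g$ in direction $1$ is a period of the extension in directions $1$ and $t$. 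After the step $t=d$ this yields the desired $x$, which is periodic whenever $A$ is finite.

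\textbf{Main difficulty.} The substance of the argument is the standard observation that a box all of whose sides have length at least two is a retract of $\Z^d$ via folding, so $a$ simply pulls back along the folding retraction. The only thing requiring genuine care is the bookkeeping for width-one directions, where there is nothing to fold: one slides along an adjacent thick direction (or, when $A$ is a single vertex and no thick direction exists, uses a checkerboard colouring from a single edge of $\H$), and one must verify that the homomorphism property and periodicity survive this auxiliary iteration.
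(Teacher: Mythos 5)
Your proof is correct and follows essentially the same route as the paper's sketch: your folding retractions $r_t$ are exactly the paper's ``reflect about the faces and tile'' step, and your slide along direction $1$ for width-one sides is the paper's ``stacking shifts'' device, with the single-vertex case handled by the standing convention that $\H$ has no isolated vertices. The only (minor) divergence is that you treat infinite intervals directly via the identity or half-line folds, whereas the paper deduces the infinite-$A$ case from the finite case by a compactness/limiting argument.
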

Here is the idea: Let us first observe this for a finite $A$. If any of the side-lengths of $A$ is one then we extend it to a pattern $\t a$ on a bigger rectangular shape by `stacking shifts' of the pattern $a$. Then we reflect the pattern obtained about its faces to obtain a pattern $b$ on a still bigger rectangular shape and finally tile $\Z^d$ by this new pattern to obtain a periodic configuration. Some of the details are provided in Part (2) of the proof of \cite[Lemma 8.2]{MR3645513}. Although the proof there is for the case when $\H$ is a tree, it carries forward without any change to our context.

Now if $A$ is an (infinite) rectangular shape then by compactness of shift spaces and a standard limiting argument (taking a sequence of rectangular patterns which approximate the given pattern and considering the corresponding sequence of configurations extending them), the result for finite rectangular patterns implies the proposition.
\\

In the following, by a given nearest neighbour SFT $X$, we mean a given finite list of patterns $\F$ on edges and vertices of $\Z^d$ such that $X=X_\F$.

\begin{prop}\label{Proposition: decidability to periodic dense conjugacy}
Fix $d\geq 2$. Let $\mathcal C$ be a set of SFTs for which periodic points are dense for all $X\in \mathcal C$. It is undecidable whether an SFT is conjugate to some $X\in\mathcal C$. 
\end{prop}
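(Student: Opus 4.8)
The claim is that for a fixed $d \geq 2$ and a class $\mathcal{C}$ of SFTs with dense periodic points, it is undecidable whether a given SFT is conjugate to some member of $\mathcal{C}$. The natural approach is a reduction from the non-emptiness (equivalently, halting) problem for $d$-dimensional SFTs. Recall from the discussion preceding the statement that for every Turing machine $T$ there is a finite alphabet $\A_T$ and finite forbidden list $\F_T$ such that $X^d_{\F_T} \neq \emptyset$ iff $T$ does not halt on empty input, and moreover $X^d_{\F_T}$ has \emph{no} periodic configurations. The plan is: given $T$, construct an SFT $Y_T$ such that $Y_T$ is conjugate to some $X \in \mathcal{C}$ iff $T$ halts, and then invoke undecidability of the halting problem.

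**The construction.** Fix once and for all some specific SFT $X_0 \in \mathcal{C}$; since periodic points are dense in $X_0$, in particular $X_0 \neq \emptyset$ (assuming $\mathcal C$ is nonempty — if $\mathcal C$ is empty the statement is trivial since nothing is conjugate to a member). Given $T$, form the disjoint-union-type SFT $Y_T := X_0 \sqcup X^d_{\F_T}$, realized concretely on the alphabet $\A_0 \sqcup \A_T$ with forbidden patterns being those of $X_0$, those of $\F_T$, and all mixed edges (forbidding any edge with one endpoint in $\A_0$ and the other in $\A_T$) — this is plainly a nearest neighbour SFT, and a finite list $\F$ describing it is computable from $T$. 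Now observe: if $T$ halts, then $X^d_{\F_T} = \emptyset$, so $Y_T = X_0 \in \mathcal{C}$, hence trivially conjugate to a member of $\mathcal{C}$. Conversely, if $T$ does not halt, then $X^d_{\F_T} \neq \emptyset$, and since $X^d_{\F_T}$ contains no periodic configuration while $X^d_{\F_T}$ is a nonempty clopen (hence shift-invariant open) piece of $Y_T$, periodic points are \emph{not} dense in $Y_T$. Since conjugacies are topological conjugacies of the $\Z^d$-action and density of periodic points is a conjugacy invariant, $Y_T$ cannot be conjugate to any $X \in \mathcal{C}$.

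**Wrapping up.** Therefore $Y_T$ is conjugate to a member of $\mathcal{C}$ if and only if $T$ halts on empty input, and since $T \mapsto Y_T$ (i.e.\ $T \mapsto \F$) is computable, decidability of "is this SFT conjugate to some $X \in \mathcal{C}$?" would decide the halting problem — a contradiction. This completes the reduction.

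**Anticipated obstacle.** The only genuinely delicate point is the direction "$T$ does not halt $\Rightarrow Y_T \notin$ (conjugates of $\mathcal{C}$)": one must argue that adding a nonempty aperiodic closed piece to $X_0$ destroys density of periodic points. This uses that $Y_T$ decomposes as a disjoint union of two shift-invariant clopen sets, that any periodic point of $Y_T$ lies entirely in one of the two pieces (a periodic point maps into a single component since the components are clopen and a single configuration is connected under the shift-orbit-closure in the sense that it can't straddle a partition into clopen invariant sets), and that the $X^d_{\F_T}$ piece, being nonempty and having no periodic points, has an open neighborhood in $Y_T$ missing all periodic configurations of $Y_T$. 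One should also double-check that the "mixed edges forbidden" device genuinely yields $Y_T = X_0 \sqcup X^d_{\F_T}$ as a set of configurations — i.e.\ that no configuration can use symbols from both alphabets — which is immediate since $\Z^d$ is connected and every edge would eventually be a mixed edge; this is routine. A minor caveat worth stating explicitly in the final proof is the trivial case $\mathcal{C} = \emptyset$, handled separately.
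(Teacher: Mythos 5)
Your proposal is correct and follows essentially the same route as the paper: form the disjoint union of a fixed member of $\mathcal C$ with the aperiodic SFT $X_{\F_T}$, note that density of periodic points (a conjugacy invariant) holds iff $X_{\F_T}$ is empty, and reduce from the halting problem. Your extra remarks (forbidding mixed edges so the union is genuinely an SFT, and the degenerate case of empty $\mathcal C$, which the paper implicitly excludes) only make explicit what the paper leaves routine.
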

\begin{proof}
Let $X\in \mathcal C$. Recall the properties of the SFT, $X_{\F_T}$ which was constructed given a Turing machine $T$. We can assume (possibly after a change in alphabet for $X$) that the underlying alphabets for $X$ and $X_{\F_T}$ are disjoint for all Turing machines $T$. Then $X\cup X_{\F_T}$ is a nearest neighbour SFT for every Turing machine $T$; since $X_{\F_T}$'s do not have periodic points, periodic points are dense in $X\cup X_{\F_T}$ if and only if $X_{\F_T}$ is empty. 

We claim that this implies $X\cup X_{\F_T}$ is conjugate to a member of $\mathcal C$ if and only if $X_{\F_T}$ is empty. Clearly, if $X_{\F_T}$ is empty then $X\cup X_{\F_T}\in \mathcal C$. Now suppose $X_{\F_T}$ is not empty. Since it does not have periodic points, periodic points are not dense in $X\cup X_{\F_T}$ and hence it cannot be conjugate to a member of $\mathcal C$.

Thus it is undecidable whether $X\cup X_{\F_T}$ is conjugate to an element of $\mathcal C$ proving, more generally, that it is undecidable whether a nearest neighbour SFT is conjugate to an element of $\mathcal C$. 
\end{proof}

\begin{corollary}\label{corollary: decidability conjugate to hom-shift}
It is undecidable whether a shift space $X$ is conjugate to a hom-shift for $d\geq 2$. 
\end{corollary}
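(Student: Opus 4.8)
The plan is to obtain this as an immediate consequence of Proposition \ref{Proposition: decidability to periodic dense conjugacy}, taking $\mathcal{C}$ to be the collection of all $d$-dimensional hom-shifts (for the fixed $d\geq 2$). To invoke that proposition I first need to check that its hypothesis holds for this $\mathcal{C}$: every hom-shift $X^d_\H$ is a nearest neighbour SFT by definition, and by Proposition \ref{Proposition: periodic points in Hom shifts} periodic configurations are dense in $X^d_\H$ --- indeed, any globally-allowed pattern of $X^d_\H$ is a homomorphism from some finite shape into $\H$, which trivially extends to a homomorphism from a finite rectangular shape, and Proposition \ref{Proposition: periodic points in Hom shifts} extends the latter to a periodic configuration. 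Hence $\mathcal{C}$ is a set of SFTs for which periodic points are dense, so the proposition applies.

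Applying Proposition \ref{Proposition: decidability to periodic dense conjugacy} with this $\mathcal{C}$ shows that it is undecidable whether an SFT --- presented, as throughout the paper, by a finite list $\F$ of forbidden patterns on edges and vertices of $\Z^d$ --- is conjugate to some hom-shift. To upgrade ``SFT'' to ``shift space'' in the statement, I would simply note that any shift space conjugate to a hom-shift is conjugate to a nearest neighbour SFT, hence is itself an SFT; so a shift space which is not an SFT is automatically not conjugate to a hom-shift, and the decision problem about shift spaces (with inputs being finite SFT presentations) is exactly the one already shown to be undecidable.

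I expect no genuine obstacle here: the substantive work is already contained in Proposition \ref{Proposition: decidability to periodic dense conjugacy}, whose proof forms the disjoint union $X \cup X_{\F_T}$ of a fixed member of $\mathcal{C}$ with the tiling SFT $X^d_{\F_T}$ associated to a Turing machine $T$, and in the density of periodic points for hom-shifts supplied by Proposition \ref{Proposition: periodic points in Hom shifts}. The only points worth a moment's care are that $\mathcal{C}$ is non-empty (e.g.\ the $2$-coloured chessboard $X^d_{K_2}$ is a hom-shift) so that the union $X \cup X_{\F_T}$ can be formed, and that the alphabet of the chosen hom-shift can be taken disjoint from those of the $X_{\F_T}$, exactly as arranged in the proof of Proposition \ref{Proposition: decidability to periodic dense conjugacy}.
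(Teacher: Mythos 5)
Your proposal is correct and follows exactly the paper's route: the paper derives the corollary immediately from Proposition \ref{Proposition: decidability to periodic dense conjugacy} applied to the class of hom-shifts, with Proposition \ref{Proposition: periodic points in Hom shifts} supplying the density of periodic points. Your additional remarks (non-emptiness of the class, alphabet disjointness, and the reduction from ``shift space'' to SFT presentations) are just careful spellings-out of what the paper leaves implicit.
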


This follows immediately from Propositions \ref{Proposition: periodic points in Hom shifts} and \ref{Proposition: decidability to periodic dense conjugacy}.

\section{Some Mixing Conditions for Hom-Shifts} \label{Section: Some Mixing Conditions for Hom-Shifts}
In this section we introduce some topological mixing conditions for shift spaces in $d\geq 2$. This introduction will be far from comprehensive; for more background consider \cite{boyle2010multidimensional}.

Given $A, B\subset \Z^d$ let
$$d_{\infty}(A, B):=\min_{\m i \in A, \m j \in B} {|\m i -\m j|_\infty}\text{ where }|\cdot|_\infty\text{ is the }l_{\infty}\text{ norm on }\R^d.
$$ 

A shift space $X$ is \emph{topologically mixing} or just \emph{mixing} if for all $\langle a \rangle_A, \langle b \rangle_B\in \L(X)$ there exists $n\in \N$ such that for all $\mi\in \Z^d$, $|\mi|_\infty\geq n$ there is $x\in X$ satisfying $x|_A=a$ and $\sigma^{\mi}(x)|_B=b$. A shift space $X$ is \emph{transitive} if for all $\langle a\rangle_A, \langle b\rangle _B\in \L(X)$ there exists $x\in X$ and $\m i\in \Z^d$ such that $x|_A=a$ and $\sigma^{\mi}(x)\Big\vert_B=b$. 

In this section we shall prove the following result:
\begin{prop}\label{prop: transitivity hom-shifts}
Let $d\geq 2$ and $\H$ be a finite undirected graph. Then $X^d_\H$ is transitive if and only if $\H$ is connected. Further it is mixing if and only if $\H$ is connected and not bipartite.
\end{prop}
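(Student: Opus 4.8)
The plan is to treat necessity separately and then build the two sufficiency directions on one combinatorial input: the fact, announced in the introduction, that graphs of walks are connected.

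\emph{Step 1 (necessity).} If $\H$ is disconnected, pick vertices $u,v$ in different components; the one‑point patterns $u$ and $v$ on $\{\m 0\}$ lie in $\L(X^d_\H)$, but no configuration can contain both, since a configuration is a graph homomorphism of the connected graph $\Z^d$ and hence has image in a single component of $\H$ — so $X^d_\H$ is not transitive. If $\H$ is bipartite with classes $V_1,V_2$, fix $v,w\in V_1$; every $x\in X^d_\H$ sends one parity class of $\Z^d$ into $V_1$, so $x_{\m 0}=v$ forces $x_{\mi}\neq w$ whenever $\mi$ has odd coordinate sum. Taking $\mi=(n,0,\dots,0)$ with $n$ odd (so $|\mi|_\infty$ unbounded) shows $X^d_\H$ is not mixing. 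Since mixing implies transitivity, mixing forces $\H$ connected and non‑bipartite.

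\emph{Step 2 (combinatorial core).} For a finite graph $\G$ write $[\G,\H]$ for the (loopless) graph on $Hom(\G,\H)$ with $f\sim g$ iff $f(x)\sim_\H g(x)$ for all $x$. The key input is the walk lemma: $[P_n,\H]$ is connected for every $n$ when $\H$ is finite and connected, $P_n$ the path on $n$ vertices; I would prove this (and its closed‑walk analogue, that $[C_p,\H]$ is connected for even $p$) by transforming one walk into another using connectedness of $\H$, via the folding reductions of \cite{Nowwinkler,brightwell2000gibbs} to the cases $\H=K_2$ and $\H$ an odd cycle. Since the exponential identification $Hom(\G_1\square\G_2,\H)\cong Hom(\G_1,[\G_2,\H])$ upgrades to an isomorphism $[\G_1\square\G_2,\H]\cong[\G_1,[\G_2,\H]]$, an induction on the number of Cartesian factors yields: the box graph $[\square_{j=1}^{d-1}P_N,\H]$ is connected; and the torus graph $\mathcal T_p:=[\square_{j=1}^{d-1}C_p,\H]$ is connected for every $p$. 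Moreover, if $p$ is even and $\H$ is connected but not bipartite then $\mathcal T_p$ is also not bipartite: letting $g$ be the odd girth of $\H$, pick a $\{0,1\}$‑valued $2$‑colouring $\psi$ of $\square_{j=1}^{d-1}C_p$ (possible, as this graph is bipartite) and a homomorphism $\phi\colon C_g\to\H$, and check that $(x,i)\mapsto\phi(i+\psi(x))$ is a homomorphism $(\square_{j=1}^{d-1}C_p)\square C_g\to\H$, i.e.\ an odd closed walk in $\mathcal T_p$.

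\emph{Step 3 (transitivity when $\H$ is connected).} After translating shapes and extending patterns, it suffices to coexist $a,b\in\L_{[1,N]^d}(X^d_\H)$ at some displacement $M\m e_1$. I would regard a homomorphism from the box $[1,M+N]\times[1,N]^{d-1}$ into $\H$ as a walk of length $M+N$ in the connected finite graph $W:=[\square_{j=1}^{d-1}P_N,\H]$, whose successive vertices are the slices $\{j_1=k\}$. Prescribing the first $N$ and last $N$ slices to be those of $a$ and of the appropriate translate of $b$, it remains to join the $N$‑th slice of $a$ to the first slice of that translate by a walk in $W$; such a walk exists because $W$ is connected, and it can be lengthened in steps of $2$ (back‑and‑forth along any edge of $W$) to the exact length $M+1-N$ by choosing $M$ large of the correct parity. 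Proposition~\ref{Proposition: periodic points in Hom shifts} then extends the resulting box‑homomorphism to a configuration.

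\emph{Step 4 (mixing when $\H$ is connected and non‑bipartite — the hard part).} Here one must realize coexistence at \emph{every} $\mi$ with $|\mi|_\infty\geq n$, uniformly, not only along a coordinate axis. I would extend $a,b$ to periodic configurations $\t a,\t b$ of a common \emph{even} period $p$ (Proposition~\ref{Proposition: periodic points in Hom shifts}), reduce by a coordinate automorphism of $\Z^d$ to $i_1=|\mi|_\infty>0$, and slice perpendicular to $\m e_1$: set the $k$‑th slice of $x$ equal to that of $\t a$ for $k\leq N$ and to that of $\sigma^{-\mi}(\t b)$ for $k\geq i_1+1$ — which forces $x|_{[1,N]^d}=a$ and $x|_{[1,N]^d+\mi}$ to be the desired translate of $b$ — and interpolate the slices $k=N+1,\dots,i_1$ by a walk of length $i_1+1-N$ in the graph on $X^{d-1}_\H$. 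The decisive point is that both endpoint slices are $p$‑periodic, and the $p$‑periodic points of $X^{d-1}_\H$ with pointwise adjacency form exactly the \emph{fixed finite} graph $\mathcal T_p$ of Step 2 — independent of $\mi$ and of $n$. By Step 2, $\mathcal T_p$ is connected and not bipartite, hence its one‑dimensional vertex shift is mixing \cite{LM}: there is $n_0$ such that any two of its vertices are joined by walks of every length $\geq n_0$. Taking $n:=n_0+N$, the interpolating walk exists for all $\mi$ with $|\mi|_\infty\geq n$, and reassembling the slices gives the required configuration. The real obstacle is Step 2 (the walk lemma, its closed‑walk version, and the connectivity they yield); the periodicity device in Step 4 is exactly what keeps the relevant homomorphism graph finite and $\mi$‑independent, and the non‑bipartite hypothesis is used precisely to make that finite graph aperiodic, so the interpolation length may be an arbitrary large integer instead of one of fixed parity.
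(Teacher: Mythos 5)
There is a genuine gap, and it sits exactly at the combinatorial claim your whole mixing argument leans on. In Step 2 you assert that $[C_p,\H]$ is connected for even $p$, and hence that the torus graph $\mathcal T_p=[\square_{j=1}^{d-1}C_p,\H]$ is connected for every $p$ and every connected $\H$. This is false: for a homomorphism $f:C_p\longrightarrow C_3$ the winding number is invariant under a pointwise-adjacency step, because every closed walk of length $4$ in $C_3$ has winding $0$ (writing each edge traversal as $\pm 1$ and summing around the four-cycle $f(i),f(i+1),g(i+1),g(i)$ gives that the total increments of $f$ and $g$ agree). So in $[C_6,C_3]$ the map $i\mapsto i \Mod 3$ (winding $2$) and the map $(0,1,0,1,0,1)$ (winding $0$) lie in different components; the same happens for $C_5$, $p=10$, etc. This rigidity is precisely the phenomenon the paper later exploits (via universal covers, Corollary \ref{corollary: universal cover infinite no block-gluing}) to show $X^d_{C_3}$ is \emph{not} block-gluing, so no general connectivity statement for torus homomorphism graphs can be true. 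Consequently Step 4 does not go through as written: after you force the two boundary slices to be $p$-periodic, the interpolation needs walks of all large lengths between two specific vertices of $\mathcal T_p$, and "connected and non-bipartite'' is not available. To repair it you would have to show that the two reflection-periodic slices and your odd closed walk all lie in one common aperiodic component of $\mathcal T_p$, which is not established and is a genuinely nontrivial statement for general $\H$. A secondary problem: your proposed proof of the walk lemma "by folding reductions to $\H=K_2$ or an odd cycle'' cannot work as a reduction, since the stiff graph obtained by folding a connected graph need not be of that form (e.g.\ $K_4$ is stiff); the path statement itself is true and has a short direct proof.

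The fix is to avoid periodic slices altogether, which is what the paper does: it interpolates between restrictions to the \emph{finite box} $B^{d-1}_n$, i.e.\ works in the graphs $\H^d_{n,walk}$, whose connectivity is proved by an elementary induction on $d$ (concatenate the two patterns with a walk in $\H$ and stack shifts of the resulting pattern). Non-bipartiteness of $\H$ directly yields odd cycles in $\H^d_{n,walk}$, so this finite graph admits walks of every sufficiently large length between any two vertices; one then builds a homomorphism on a box containing $a$ and the required translate of $b$ and only at the end extends it to all of $\Z^d$ by Proposition \ref{Proposition: periodic points in Hom shifts}. Because the interpolation is on boxes rather than tori, no winding-number obstruction ever arises. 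Your Steps 1 and 3 are essentially the paper's argument and are fine.
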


Before we proceed with the proof, we shall consider a few more standard mixing conditions. A stronger mixing property which is also the main theme of this paper is the \emph{block-gluing property}: A shift space $X$ is said to be \emph{block-gluing} if there exists an $n\in \N$ such that for all rectangular patterns $\l a \r_A, \l b \r_B\in \L(X)$ satisfying $d_{\infty}(A,B)\geq n$ there exists $x\in X$ such that $x|_A=a$ and $x|_B=b$. A still stronger mixing condition is the following: A shift space $X$ is called \emph{strongly irreducible} (SI) if there exists $n \in \N$  such that for all $\l a \r_A, \l b \r_B\in \L(X)$ satisfying $d_{\infty}(A,B)\geq n$ there exists $x\in X$ such that $x|_A=a$ and $x|_B=b$.

The hard square shift $X$ is SI for $n=2$: Given shapes $A, B$ such that $d_{\infty}(A, B)\geq 2$ and $a\in \L_A(X), b \in \L_B(X)$, $x\in X$ given by 
$$x_\mi:=\begin{cases}
a_\mi&\text{ if }\mi \in A\\
b_\mi&\text{ if }\mi \in B\\
0&\text{ otherwise}
\end{cases}$$
satisfies $x|_A=a$ and $x|_B=b$. We will give a large class of examples in this paper of hom-shifts which are block-gluing and of hom-shifts which are mixing but not block-gluing. (Theorem \ref{theorem:fcfreecharacterisation}) We will also give an example of an hom-shift which is (phased) block-gluing but not (phased) SI in Subsection \ref{subsection:Why is Four-Cycle Hom-Free Condition Necessary}; the phased properties are introduced in Section \ref{section: The Block-Gluing Property for Hom-Shifts}.

\begin{prop}\label{prop: undecidability of transitivity/block-gluing}
Let $d\geq 2$. It is undecidable whether an SFT is transitive/mixing/block-gluing/SI.
\end{prop}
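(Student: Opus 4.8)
The plan is to reduce the halting problem to each of the four properties, in the same spirit as Proposition~\ref{Proposition: decidability to periodic dense conjugacy}. Recall the construction associating to a Turing machine $T$ a nearest neighbour SFT $X_{\F_T}$ which is non-empty if and only if $T$ does not halt on empty input, and which has \emph{no} periodic configurations (in particular, when non-empty, $X_{\F_T}$ contains a point but no point with finite orbit). The idea is to build, from $X_{\F_T}$, an SFT $Y_T$ that has the mixing property in question precisely when $X_{\F_T}$ is empty; since emptiness of $X_{\F_T}$ is undecidable, so is the mixing property.

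First I would fix a ``known good'' building block: a nearest neighbour SFT $Z$ on an alphabet disjoint from that of all $X_{\F_T}$ which is SI (hence also block-gluing, mixing and transitive) — for instance $Z = X^d_{K_n}$ for $n$ large, or even a one-point fixed-symbol shift; the cleanest choice is a single fixed symbol $\{*\}^{\Z^d}$, which is trivially SI. Then set $Y_T := Z \cup X_{\F_T}$, which is again a nearest neighbour SFT because the alphabets are disjoint (so no edge or vertex pattern links the two pieces, and no configuration mixes symbols from both). The key observation is that a disjoint union $Z \cup W$ of two non-empty shift spaces is never transitive (hence never mixing, block-gluing or SI): pick a pattern $a$ occurring in $Z$ and a pattern $b$ occurring in $W$; since the alphabets are disjoint, no single configuration can contain both $a$ and a shift of $b$. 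Conversely, if $X_{\F_T}$ is empty then $Y_T = Z$, which has all four properties. Therefore $Y_T$ is transitive (resp.\ mixing, block-gluing, SI) if and only if $X_{\F_T} = \emptyset$, and all four properties are undecidable.

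The main point requiring care — and the only real obstacle — is the claim that the disjoint union is genuinely a \emph{nearest neighbour} SFT and that it fails transitivity as soon as both pieces are non-empty. For the SFT claim one just takes the union of the two forbidden lists together with all edge patterns $\langle u, v\rangle^i$ with $u$ in one alphabet and $v$ in the other; this forces every configuration to use symbols from exactly one of the two alphabets, so $X_{\F} = Z \cup X_{\F_T}$. For the failure of transitivity one uses that transitivity, applied to a pattern from $Z$ and a pattern from $X_{\F_T}$ supported on shapes $A$ and $B$, would produce a configuration agreeing with each on $A$ and a translate of $B$ — impossible since such a configuration would have to lie in a single connected ``component'' of the disjoint union and hence use only one alphabet. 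One should also note that this argument is completely uniform in which of the four mixing notions is considered, since each of them implies transitivity, and each of them holds for $Z$; so a single construction handles all four cases at once. Finally, I would remark that $d \geq 2$ is used only through the existence of the family $X_{\F_T}$ with undecidable emptiness and no periodic points, exactly as in the preceding proposition.
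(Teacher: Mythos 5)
Your proof is correct and is essentially the paper's own argument: the paper likewise takes the disjoint union of $X_{\F_T}$ with a fixed SI shift (it uses the hard square shift where you use a fixed-point shift or $X^d_{K_n}$, an immaterial difference) and observes that the union has each of the four properties if and only if $X_{\F_T}$ is empty, reducing to the undecidable emptiness problem exactly as in Proposition~\ref{Proposition: decidability to periodic dense conjugacy}.
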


The proof is very similar to the proof of Proposition \ref{Proposition: decidability to periodic dense conjugacy}. Let $X$ be the hard square shift and consider for every Turing machine $T$ the SFT $X_{\F_T}$ (with alphabet disjoint from $\{0,1\}$); it is undecidable whether $X_{\F_T}$ is empty. Further $X\cup X_{\F_T}$ is transitive/mixing/block-gluing/SI if and only if $X_{\F_T}$ is empty; thus the proposition follows.
\\

Now let us return to Proposition \ref{prop: transitivity hom-shifts}. Suppose $\H$ is not connected. Let $\H=\H_1\cup \H_2$ where $\H_1$ and $\H_2$ are disjoint. Then $X^d_\H=X^d_{\H_1}\cup X^d_{\H_2}$ where $X^d_{\H_1}$ and $X^d_{\H_2}$ are non-empty shift spaces over disjoint alphabets proving that $X^d_\H$ is not transitive. Also if $\H$ is bipartite then $X^d_{\H}$ is not mixing since for a given $x\in X^d_{\H}$ and all even vertices $\mi \in \Z^d$, $x_{\mi}$ belong to the same partite class.

To prove the other direction we  will use some auxiliary constructions; the idea used for the proof of this proposition will be useful later as well.

A \emph{walk} $p$ in a graph $\H$ is a (finite, infinite or bi-infinite) sequence of vertices $\{p_i\}$ in $\H$ satisfying $p_{i}\sim_\H p_{i+1}$ for all $i$. A walk of \emph{length} $k$ is a finite walk $p=(p_0, p_1, \ldots, p_{k})$; let $|p|$ denote the length of $p$. Denote by $[i,j]$ the induced subgraph of $\Z$ on $\{i, i+1, \ldots , j\}$. For every $n \in \Z^+$ and $d\geq 2$ let 
$$B^{d-1}_n:=\square^{d-1}_{j=1} [-n,n],$$ that is, the $l^\infty$ ball of radius $n$ in $\Z^{d-1}$. Consider the graph $\H_{n,walk}^d:=(Hom(B^{d-1}_n, \H), \E1_{n,walk}^d)$ where
$$\E1^d_{n,walk}:=\{(x,y)~:~x_{\m i}\sim_\H y_{\m i}\text{ for all }\m i\in B^{d-1}_n\}.$$
As with homotopies in algebraic topology, there is a walk from $p$ to $q$ in $\H_{n,walk}^d$ of length $k$ if and only if there is a graph homomorphism $a: B^{d-1}_n\square [0,k]\longrightarrow \H$ such that $a_{\mi, 0}=p_\mi$ and $a_{\mi,k}=q_\mi$ for all $\mi\in B^{d-1}_n$. We will use this correspondence frequently throughout the paper.
Connectivity of the graph $\H_{n,walk}^d$ is related to the transitivity/mixing property via the following lemma:

\begin{lemma}\label{lemma: transitivity n-walk}
Let $d\geq 2$ and $\H$ be a finite undirected graph. If $\H_{n,walk}^d$ is connected for all $n\in \Z^+$ then $X^d_\H$ is transitive. Further if $\H_{n,walk}^d$ is connected and not bipartite for all $n\in \Z^+$ then $X^d_\H$ is mixing.
\end{lemma}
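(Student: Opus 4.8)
The plan is to recast both claims as problems of gluing graph homomorphisms on cubes and then to solve those by finding walks in bounded-width analogues of the graphs $\H^d_{n,walk}$. First, two reductions. By Proposition \ref{Proposition: periodic points in Hom shifts} every homomorphism on a box extends to a configuration in $X^d_\H$, and globally allowed patterns extend to configurations; so, given $\l a\r_A,\l b\r_B\in\L(X^d_\H)$, after extending each to a homomorphism on a cube and translating — which changes the mixing constant produced below by at most an additive constant depending only on $a,b$ — I may assume $A=B=[0,R]^d$ with $a,b\in Hom([0,R]^d,\H)$. Second, for $n\ge R$ the box $[0,R]^{d-1}$ is an induced sub-box of $B^{d-1}_n$; homomorphisms on a sub-box extend to $\Z^{d-1}$ by Proposition \ref{Proposition: periodic points in Hom shifts}, hence to $B^{d-1}_n$, and restrictions of walks (resp.\ of odd closed walks) are again walks (resp.\ odd closed walks), so both connectivity and non-bipartiteness descend from the graphs $\H^d_{n,walk}$ to the walk-graph $W$ on $Hom([0,R]^{d-1},\H)$ (two homomorphisms adjacent iff $\H$-adjacent pointwise). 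Thus $W$ is connected, and if moreover the $\H^d_{n,walk}$ are non-bipartite then $W$ is connected and non-bipartite, so there is a constant $\tau_R$ with the property that any two vertices of $W$ are joined in $W$ by a walk of every length $\ge\tau_R$ (the standard fact about finite connected non-bipartite graphs). I shall use freely the correspondence recorded in the text between walks of length $k$ in such walk-graphs and homomorphisms on $(\text{box})\square[0,k]$ with prescribed $0$-th and $k$-th slices; in particular the $\m e_d$-slices of a homomorphism on $[0,R]^d$ form a walk of length $R$ in $W$.

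\emph{Transitivity.} Extend $a,b$ to $\t a,\t b\in X^d_\H$, fix $n\ge R$, and set $p:=\t a|_{B^{d-1}_n\times\{R\}}$ and $q:=\t b|_{B^{d-1}_n\times\{0\}}$, which are vertices of $\H^d_{n,walk}$. By connectivity there is a walk from $p$ to $q$ of some length $k$, i.e.\ a homomorphism $c\colon B^{d-1}_n\square[0,k]\to\H$ with $c_{\cdot,0}=p$ and $c_{\cdot,k}=q$. Define $x$ on $B^{d-1}_n\times\Z$ to agree with $\t a$ on heights $\le R$, to be $(\m u,t)\mapsto c_{\m u,\,t-R}$ on heights $R\le t\le R+k$, and to be $(\m u,t)\mapsto\t b_{(\m u,\,t-R-k)}$ on heights $\ge R+k$; the pieces agree on the two overlap slices by the choice of $p,q$, so $x$ is a homomorphism, and Proposition \ref{Proposition: periodic points in Hom shifts} extends it to $X^d_\H$. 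Then $x|_{[0,R]^d}=a$ and $\sigma^{(R+k)\m e_d}(x)|_{[0,R]^d}=b$, so $X^d_\H$ is transitive.

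\emph{Mixing.} Put $N:=R+\tau_R$, increased by the additive constant from the reduction. Let $|\mi|_\infty\ge N$. Precomposing with a coordinate automorphism of $\Z^d$ (a permutation and reflections of coordinates) is a symmetry of $X^d_\H$ that preserves $\L(X^d_\H)$, so we may assume $\mi=(\mi_1,\dots,\mi_d)$ with every $\mi_j\ge0$ and $\mi_d=|\mi|_\infty$, and (as in the first reduction) $A=B=[0,R]^d$. Let $\t a\in X^d_\H$ extend $a$. I build a homomorphism on a monotone ``staircase tube'' running from $A=[0,R]^d$ to $A+\mi$: process the coordinates $1,\dots,d-1$ in order, each time translating the current head cube by $\mi_j\m e_j$ while sweeping along $\m e_j$; let $U_0$ be the union of $A$ with these $d-1$ legs, and put $\t a|_{U_0}$ on $U_0$ — a genuine homomorphism (being a restriction), equal to $a$ on $A$. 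Because the staircase is monotone and its legs are taken in coordinate order, every point of $U_0$ has $\m e_d$-coordinate in $[0,R]$, and the head cube reached after these $d-1$ legs lies in $U_0$. Now take the $d$-th leg to be the tube of $\m e_d$-length $\mi_d+R$ over that head cube's horizontal extent whose $\m e_d$-slices $0,\dots,R$ reproduce the slices of $\t a|$ (so that it agrees with $\t a|_{U_0}$ wherever it meets $U_0$, that meeting having $\m e_d$-coordinate in $[0,R]$), whose $\m e_d$-slices $\mi_d,\dots,\mi_d+R$ reproduce the slices of the copy of $b$ placed on $A+\mi$, and whose $\m e_d$-slices $R,\dots,\mi_d$ trace a walk of length $\mi_d-R$ in $W$ from the $R$-th slice of $\t a|$ to the $0$-th slice of that copy of $b$; such a walk exists since $W$ is connected and non-bipartite and $\mi_d-R\ge\tau_R$. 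The $d$-th leg is then a homomorphism agreeing with $\t a|_{U_0}$ on their overlap, so the two glue to a homomorphism on $U_0\cup(d\text{-th leg})$ that restricts to $a$ on $A$ and to the placed copy of $b$ on $A+\mi$; extend it to $x\in X^d_\H$ by Proposition \ref{Proposition: periodic points in Hom shifts}. Then $x|_{[0,R]^d}=a$ and $\sigma^{\mi}(x)|_{[0,R]^d}=b$, so $X^d_\H$ is mixing.

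The one delicate point, and the reason for the staircase rather than a single thick slab, is to keep the width of the gluing region bounded independently of $\mi$. The naive attempt would place $A$ and $A+\mi$ inside a slab $B^{d-1}_n\times\Z$ with $n$ comparable to $|\mi|_\infty$ and connect the two ends by one walk in $\H^d_{n,walk}$; but any such walk has length at least the diameter of $\H^d_{n,walk}$, which — precisely the phenomenon this paper studies — may grow with $n$, possibly superlinearly, so that no fixed $N$ would work. The staircase confines all resynchronisation to a single long leg of width $R+1$, so the relevant walk-graph $W$ and its threshold $\tau_R$ depend only on $R$; the price is the bookkeeping, supplied above by monotonicity of the staircase, guaranteeing that consecutive legs overlap only where the configuration already equals the fixed extension $\t a$.
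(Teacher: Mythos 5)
Your transitivity argument is correct and is essentially the paper's own (paste $a$, a connecting walk, and $b$ inside a rectangular tube and invoke Proposition \ref{Proposition: periodic points in Hom shifts}), and your observation that connectivity and non-bipartiteness of the graphs $\H^d_{n,walk}$ descend to the walk graph $W$ on $Hom([0,R]^{d-1},\H)$ is fine. The mixing half, however, has a genuine gap at its last step: the region $U_0\cup(d\text{-th leg})$ is a staircase, not a product of intervals, and Proposition \ref{Proposition: periodic points in Hom shifts} only guarantees extension of homomorphisms defined on rectangular shapes $\square_t I_t$ (its proof, by stacking and reflecting, uses that product structure essentially). Extendability of a homomorphism from an L-shaped or staircase-shaped region to a configuration in $X^d_\H$ is not a consequence of the hypotheses of the lemma; it is a corner-gluing/SI-type property, and precisely this kind of non-rectangular constraint is what the paper shows can go wrong (see the graph of Figure \ref{figure:SInotlbock} in Subsection \ref{subsection:Why is Four-Cycle Hom-Free Condition Necessary}, where the values on an L-shaped set force an entire square). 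So the claim ``extend it to $x\in X^d_\H$ by Proposition \ref{Proposition: periodic points in Hom shifts}'' is unjustified, and with it the whole staircase construction: nothing you have assumed lets you fill in the inner corners between consecutive legs.

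The gap is also avoidable, because the worry that motivates your staircase rests on a misreading of the difficulty. You do not need the slab half-width $n$ to grow with $|\mi|_\infty$: since the mixing constant may depend on the patterns, the paper fixes $n$ from $a,b$ alone, extends $a$ and $b$ horizontally periodically to homomorphisms $\t a,\t b$ on the infinite slabs $\Z^{d-1}\square[-n,n]$ (a rectangular shape, so Proposition \ref{Proposition: periodic points in Hom shifts} does apply), and joins the top slice of $\t a$ to the bottom slice of the horizontally shifted $\t b$ by a walk in $\H^d_{n,walk}$ of length exactly $i_d-2n$. Connectivity together with non-bipartiteness of the finite graph $\H^d_{n,walk}$ gives walks of every length $\geq k$ between any two of its vertices, so the single threshold $k+2n$ works for all $\mi$ with $i_d=|\mi|_\infty\geq k+2n$; the (possibly huge) horizontal component of $\mi$ is absorbed by the horizontal periodicity of the slab patterns rather than by geometric routing, and every extension performed is from a rectangular shape. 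If you want to salvage your staircase, you would need to prove an extension result for the specific non-rectangular regions you build, which is a substantially stronger statement than the lemma itself.
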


\begin{proof}
Let $A, B\subset \Z^d$ be finite sets and $\langle a\rangle_A, \langle b\rangle_B\in \L(X^d_\H)$ be given and suppose $\H^d_{n,walk}$ is connected. We need to prove that there exists some $\m i \in \Z^d$ such that $x|_A=a$ and $(\sigma^\mi(x))|_{B}=b$. By shifting the patterns if necessary and extending them to $B^d_n$ for some large enough $n>1$ we can assume $A=B= B^d_n$. By the hypothesis we know that $\H_{n,walk}^d$ is connected so there is a walk of length $k$ for some $k \in \N$ from $a|_{B^{d-1}_n\square{\{n\}}}$  to $b|_{B^{d-1}_n\square{\{-n\}}}$; here the graphs $B^{d-1}_n\square{\{-n\}}$ and $B^{d-1}_n\square{\{n\}}$ are identified with $B^{d-1}_n$. As observed earlier, this gives us a homomorphism $c: B^{d-1}_n\square{[n,n+k]}\longrightarrow \H$ such that $c_{\mi, n}=a_{\mi, n}$ and $c_{\mi, n+k}=b_{\mi, -n}$. `Pasting together' the configurations $a$ and $b$ to $c$ we get a homomorphism $l:B^{d-1}_n\square[-n,3n+k] \longrightarrow \H$ with $l|_{B^d_n}=a$ and 
$$l|_{B^{d}_n+(2n+k)\m e_d}=(\sigma^{-(2n+k)\m e_d}(b)).$$
 By Proposition \ref{Proposition: periodic points in Hom shifts} we see that $X^d_\H$ is transitive. 

For mixing, assume that $\H^d_{n,walk}$ is connected and not bipartite.  As before, let $\langle a\rangle_{B^{d}_{n}}, \langle b\rangle_{B^d_n}\in \L(X^d_\H)$. Choose an integer $k$ such that for all $a', b'\in \H^d_{n,walk}$ there is a walk from $a'$ to $b'$ of length $r$ for all $r\geq k$. Let $\mi=(i_1, i_2, \ldots, i_d)$ such that $|\mi|_\infty\geq k+2n$; without the loss of generality assume that $i_d\geq k+2n$. Extend $a$ and $b$ periodically to get extensions $\t a, \t b$ on $\Z^{d-1}\square[-n,n]$. There is a walk in $\H^d_{n,walk}$ from $\t a|_{B^{d-1}_n\square\{n\}}$ to $(\sigma^{-\mi}(\t b))|_{B^{d-1}_n\square\{-n+i_d\}}$ of length $i_d-2n$; thus we get a homomorphism $l': B^{d-1}_n\square{[-n,n+i_d]}\longrightarrow \H$ such that
$$l'|_{B^d_n}=\t a|_{B^d_n}\text{ and }l'|_{B^{d-1}_n\square[-n+i_d,n+i_d]}=(\sigma^{-\mi}(\t b))|_{B^{d-1}_n\square[-n+i_d,n+i_d]}.$$
By periodically extending $l'$ we get a homomorphism $\t l:\Z^{d-1}\square[-n,n+i_d]\longrightarrow \H$ such that
$$\t l|_{\Z^{d-1}\square[-n,n]}=\t a\text{ and }(\sigma^{\mi}(\t l))|_{\Z^{d-1}\square[-n,n]}=\t b.$$
By Proposition \ref{Proposition: periodic points in Hom shifts} the proof is complete.
\end{proof}

\begin{proof}[Proof of Proposition \ref{prop: transitivity hom-shifts}]
Fix $d\geq 2$. We have already shown that if $\H$ is not connected then $X^d_\H$ is not transitive. Let $\H$ be a connected graph. By Lemma \ref{lemma: transitivity n-walk} we need to prove that the graph $\H_{n,walk}^d$ is connected for all $n \in \Z^+$. When $n=0$, then $B^d_n$ consists of a single vertex; the connectivity of $\H_{0,walk}^d$ is exactly the connectivity of the graph $\H$. Now fix $n\geq 1$. The argument will follow by induction on $d$.
\smallskip

\noindent\textbf{Base Case:}
 Let $p, q\in \H_{n,walk}^2$. Consider a walk $r$ (say of length $k$) in $\H$ from $p_{n}$ to $q_{-n}$. Let $s:[-n,3n+k]\longrightarrow \H$ be the walk `joining' $p$, $r$ and $q$; formally, let

$$s_i:=\begin{cases}
p_i&\text{ if }i\in [-n,n]\\
r_{i-n}&\text{ if }i\in[n,n+k]\\
q_{i-2n-k}&\text{ if }i\in[n+k,3n+k].
\end{cases}•$$

By `stacking together the shifts' of the pattern $s$ we get a walk in $\H_{n,walk}^2$ from $p$ to $q$; formally, let $p^i\in\H_{n,walk}^2$ be given by 
$p^i_{t}:=s_{i+t}$ for $t\in [-n,n]$ and $i\in [0, 2n+k]$. Then $p^0=p$, $p^{2n+k}=q$ and 
$$p^i_t=s_{i+t}\sim_\H s_{i+t+1}=p^{i+1}_t$$
proving that $p^i\sim_{\H^2_{n,walk}}p^{i+1}$.

\noindent\textbf{The induction step:}
Let's assume the conclusion for some $d\geq 2$. Let $p,q\in \H_{n,walk}^{d+1}$. By the induction hypothesis there exists a walk $r^0, r^1,\ldots, r^k$ in $\H_{n,walk}^d$ from $p|_{[-n,n]^{d-1}\square\{n\}}$ to $q|_{[-n,n]^{d-1}\square\{-n\}}$ for some $k$. Let $s:[-n,n]^{d-1}\square[-n,3n+k]\longrightarrow \H$ be a graph homomorphism obtained by `joining' $p$, $r^0,r^1, \ldots, r^k$ and $q$; formally let

$$s_{\m j,i}:=\begin{cases}
p_{\mj,i}&\text{ if } i\in [-n,n]\\
r^{i-n}_{\mj}&\text{ if }i \in [n,n+k]\\
q_{\mj,i-2n-k}&\text{ if }i\in[n+k,3n+k]
\end{cases}$$
for all $\mj\in [-n,n]^{d-1}$. As in the base case, by `stacking together the shifts' of the pattern $s$ we get a walk from $p$ to $q$ in $\H_{n, walk}^{d+1}$. This proves that $X^d_\H$ is transitive.

If $\H$ is bipartite with partite classes $V_1, V_2$ and $x\in X^d_\H$ then $x_{\m 0}\in V_1$ if and only if $x_{\mi}\in V_1$ for all even vertices $\mi\in \Z^{d}$; thus $X^d_{\H}$ isn't mixing. For the other direction assume that $\H$ is connected and not bipartite. By the first part of the proof the graph $\H^d_{n,walk}$ is connected. Further since $\H$ is not bipartite it has an odd cycle. Thus one obtains an odd cycle in $\H^d_{n, walk}$ for all $n$; hence it is also not bipartite. By Lemma \ref{lemma: transitivity n-walk}, the proof is complete.
\end{proof}

Observe that the proof of Proposition \ref{prop: transitivity hom-shifts} gives us a bound on the diameter in the graph metric of $\H^{d+1}_{n,walk}$ given the diameter of $\H^{d}_{n, walk}$. Specifically
\begin{equation}
diam(\H^{d+1}_{n,walk})\leq 2n+diam(\H^{d}_{n,walk})\label{equation: nwalk d to d+1}
\end{equation}
for all $d\geq 0$; here $\H^{0}_{n,walk}$ is interpreted as the graph $\H$. We will be interested in cases where $diam(\H^{d+1}_{n,walk})$ is uniformly bounded for all $n$.

The following corollary follows from arguments in the proofs of Lemma \ref{lemma: transitivity n-walk} and Proposition \ref{prop: transitivity hom-shifts}.

\begin{corollary}\label{corollary: transitivity connectivity blah blah}
Let $\H$ be a finite undirected graph. The following are equivalent:
\begin{enumerate}
\item
$\H$ is connected.
\item
$X^d_\H$ is transitive for some $d\in \N$.
\item
$X^d_\H$ is transitive for all $d\in \N$.
\item
$\H^d_{n,walk}$ is connected for all $n$ and $d$.
\item
$\H^d_{n,walk}$ is connected for some $n$ and $d$.
\end{enumerate}
\end{corollary}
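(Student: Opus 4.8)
The plan is to establish the cycle of implications
$(1)\Rightarrow(4)\Rightarrow(5)\Rightarrow(1)$ and $(1)\Rightarrow(3)\Rightarrow(2)\Rightarrow(1)$, most pieces of which are already packaged in the preceding results. The implication $(1)\Rightarrow(4)$ is precisely the content of the induction carried out in the proof of Proposition~\ref{prop: transitivity hom-shifts}: if $\H$ is connected then $\H^d_{n,walk}$ is connected for every $n\geq 0$ and every $d\geq 2$, and the case $d=1$ (or the convention $\H^0_{n,walk}=\H$) reduces to the connectivity of $\H$ itself. The implication $(4)\Rightarrow(5)$ is trivial since there exist such $n$ and $d$. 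Likewise $(1)\Rightarrow(3)$ is the transitivity half of Proposition~\ref{prop: transitivity hom-shifts}, and $(3)\Rightarrow(2)$ is immediate. So the real work is in the two ``converse'' arrows $(5)\Rightarrow(1)$ and $(2)\Rightarrow(1)$, i.e.\ showing that a single instance of connectivity of some $\H^d_{n,walk}$, or of transitivity of some $X^d_\H$, forces $\H$ to be connected.

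For $(2)\Rightarrow(1)$ I would argue by contrapositive exactly as in the opening paragraph of the proof of Proposition~\ref{prop: transitivity hom-shifts}: if $\H$ is disconnected, write $\H=\H_1\sqcup\H_2$ with both parts nonempty (hence, since $\H$ has no isolated vertices, each part carries an edge), so $X^d_\H=X^d_{\H_1}\sqcup X^d_{\H_2}$ is a disjoint union of two nonempty clopen shift-invariant subsets, which rules out transitivity for every $d$. For $(5)\Rightarrow(1)$ I would run the analogous observation at the level of walk graphs: a disconnection $\H=\H_1\sqcup\H_2$ induces a disconnection $\mathrm{Hom}(B^{d-1}_n,\H)=\mathrm{Hom}(B^{d-1}_n,\H_1)\sqcup\mathrm{Hom}(B^{d-1}_n,\H_2)$, because $B^{d-1}_n$ is connected so the image of any homomorphism lies entirely in one component of $\H$; moreover each piece is nonempty (take a constant homomorphism onto any vertex, or use Proposition~\ref{Proposition: periodic points in Hom shifts}), and no edge of $\H^d_{n,walk}$ can cross between the two pieces since adjacency of configurations requires pointwise adjacency in $\H$. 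Hence $\H^d_{n,walk}$ is disconnected for every $n$ and $d$, giving the contrapositive of $(5)\Rightarrow(1)$.

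I expect no serious obstacle here: every ingredient is either already proved or a one-line topological/graph-theoretic observation. The only point requiring a little care is the bookkeeping around degenerate cases --- namely $n=0$ (where $B^{d-1}_n$ is a single vertex and $\H^d_{0,walk}$ should be read as $\H$ itself) and $d=1$ (where the relevant statement again collapses to connectivity of $\H$) --- so that the quantifiers ``for all $n$ and $d$'' versus ``for some $n$ and $d$'' are handled honestly. With those conventions fixed, the implications chain together and the corollary follows; it would be enough to write: $(1)\Rightarrow(3)\Rightarrow(2)\Rightarrow(1)$ by Proposition~\ref{prop: transitivity hom-shifts} and its proof, and $(1)\Rightarrow(4)\Rightarrow(5)\Rightarrow(1)$ by the proof of Proposition~\ref{prop: transitivity hom-shifts} together with the disconnection argument above.
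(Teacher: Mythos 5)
Your proposal is correct and follows essentially the route the paper intends: the corollary is stated there as following from the arguments in Lemma \ref{lemma: transitivity n-walk} and Proposition \ref{prop: transitivity hom-shifts}, which is exactly the chaining of the induction step, the walk-to-transitivity lemma, and the contrapositive disconnection observations that you spell out. The only micro-slip is that a constant map onto a vertex is a homomorphism only if that vertex has a self-loop, but since each component of $\H$ contains an edge (no isolated vertices) the alternating two-vertex pattern you also mention gives nonemptiness of each piece, so nothing is lost.
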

Let $\H$ be a bipartite connected graph with partite classes $V_1, V_2$. Then $X^d_\H=X_1\cup X_2$ where
$$X_i:=\{x\in X^d_\H~:~x_{\m 0}\in V_i\}.$$
To prove that if $\H$ is connected and not bipartite then $X^d_\H$ is mixing, the only place we used the fact that the graph $\H$ is not bipartite is to conclude that $\H^d_{n,walk}$ is also not bipartite. If $\H$ is connected and bipartite then $\H^d_{n,walk}$ is also connected and bipartite; there exists $K\in \N$ such that for any $p, q\in \H^{d}_{n, walk}$ and $k>K$ there is a walk from $p$ to $q$ of length either $k$ or $k+1$. It follows that $X_1$ and $X_2$ are mixing SFTs for the $(2\Z)^d$ action. So we have the following proposition:

\begin{corollary}\label{corollary:biparite2zdactiontransitive}
If $\H$ is a bipartite connected graph then $X^d_\H$ is a disjoint union of two conjugate mixing SFTs with respect to the $(2\Z)^d$ action.
\end{corollary}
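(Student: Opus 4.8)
The plan is to unwind the two claims in Corollary \ref{corollary:biparite2zdactiontransitive}: first that $X^d_\H$ decomposes as a disjoint union $X_1 \sqcup X_2$ of $(2\Z)^d$-invariant clopen sets, and second that each $X_i$, viewed as an SFT for the $(2\Z)^d$-action, is mixing, and that the two are conjugate.

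First I would verify the decomposition. Since $\H$ is bipartite with partite classes $V_1, V_2$, for any $x \in X^d_\H$ the parity argument already used in the proof of Proposition \ref{prop: transitivity hom-shifts} shows that $x_{\m 0} \in V_i$ forces $x_{\mi} \in V_i$ for every even $\mi \in \Z^d$ and $x_{\mi} \in V_{3-i}$ for every odd $\mi$. Hence $X_1$ and $X_2$ are disjoint, their union is all of $X^d_\H$, each is closed (the condition $x_{\m 0} \in V_i$ is clopen since $V_i$ is a subset of the finite alphabet), and each is invariant under $\sigma^{2\m e_j}$ for all $j$, i.e.\ under the $(2\Z)^d$-action. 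One should also note each $X_i$ is a $(2\Z)^d$-SFT: recoding $\Z^d$ by the sublattice $(2\Z)^d$ turns the nearest-neighbour constraints of $X^d_\H$ into finite-range constraints on the $2^d$ cosets, and restricting the $\m 0$-coset alphabet to $V_i$ is a finite constraint.

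Next, mixing of $X_1$ (and symmetrically $X_2$) for the $(2\Z)^d$-action. Here I would reuse the walk machinery: as the text points out, the only place bipartiteness of $\H$ obstructed the argument of Lemma \ref{lemma: transitivity n-walk} and Proposition \ref{prop: transitivity hom-shifts} was in concluding that $\H^d_{n,walk}$ is non-bipartite. When $\H$ is connected and bipartite, $\H^d_{n,walk}$ is connected and bipartite, and there is $K \in \N$ so that any two vertices $p,q$ of $\H^d_{n,walk}$ in the same partite class are joined by a walk of every even length $\geq K$, and in opposite classes by a walk of every odd length $\geq K$ (standard for connected bipartite graphs: combine a path with a back-and-forth along one edge, which adds $2$ to the length). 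Feeding this into the gluing construction of Lemma \ref{lemma: transitivity n-walk}, but now requiring the separation in the $\m e_d$-direction to be a vector in $(2\Z)^d$ of large enough $\ell^\infty$-norm — which is possible precisely because the length-parity of the connecting walk is pinned down by which partite class (equivalently, which coset parity) the two patterns live on, and both patterns lie in $X_1$ so these parities are consistent — yields, via Proposition \ref{Proposition: periodic points in Hom shifts}, the configuration in $X_1$ exhibiting the two given patterns at $(2\Z)^d$-separation $\geq$ some fixed $n$. That is $(2\Z)^d$-mixing of $X_1$.

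Finally, the conjugacy $X_1 \cong X_2$: the map $x \mapsto \sigma^{\m e_1}(x)$ sends $X^d_\H$ to itself, swaps the two partite-class conditions at the origin, hence carries $X_1$ bijectively onto $X_2$; it is continuous with continuous inverse $\sigma^{-\m e_1}$, and it commutes with the $(2\Z)^d$-action since $\m e_1$ and $2\m e_j$ commute in the abelian group $\Z^d$. So it is a conjugacy of $(2\Z)^d$-systems. The main obstacle, and the only step needing genuine care, is the length-parity bookkeeping in the mixing argument: one must check that restricting the gluing displacement to $(2\Z)^d$ is compatible with the parity of the walk length forced by the bipartite structure, so that the construction never asks for a walk of a forbidden length. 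Everything else is routine unwinding of definitions and the observations already recorded in the excerpt just before the corollary.
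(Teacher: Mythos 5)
Your proof is correct and takes essentially the same route as the paper: the decomposition $X_i=\{x\in X^d_\H : x_{\m 0}\in V_i\}$, the observation that $\H^d_{n,walk}$ is connected and bipartite so that between any two of its vertices there are walks of every sufficiently large length of the appropriate parity, and the conjugacy $X_1\cong X_2$ via a unit shift, which commutes with the $(2\Z)^d$-action. The length-parity bookkeeping you spell out is exactly the point the paper leaves implicit in the sentence preceding the corollary.
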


 This is reminiscent of the case for $d=1$, where if $X$ is an irreducible SFT of period $p$ then it can be written as  disjoint union of $p$ conjugate mixing SFTs with respect to the $p\Z$ action (\cite[Exercise 4.5.6]{LM}). We shall state similar conclusions in Corollary \ref{corollary:bipartite2zdactionblock} for some stronger mixing properties. We remark that the group $(2\Z)^d$ (which is of index $2^d$ in $\Z^d$) can be replaced by any subgroup contained in the same partite class as $\m 0$ in these results. However for the ease of notation and understanding, we will work with the group $(2\Z)^d$ instead.

\section{The Phased Block-Gluing and SI Property for Hom-Shifts}\label{section: The Block-Gluing Property for Hom-Shifts}

From here on the graph $\H$ is connected unless stated otherwise. The graph metric on $\H$ is denoted by $d_\H$. The block-gluing property is too restrictive: If $\H$ is bipartite then $X^d_\H$ is not even mixing. With this in view, we define the following:

A shift space $X$ is said to be \emph{phased block-gluing} if there exists an $n\in \N$ and a finite set $S\subset \Z^d$ such that for all rectangular patterns $\l a \r_A, \l b \r_B\in \L(X)$ satisfying $d_{\infty}(A, B)\geq n$ there exists $x\in X$ such that $x|_A=a$ and $\sigma^{\m i}(x)|_B=b$ for some $\m i \in S$. The set $S$ will be called a \emph{gluing set} of $X$ and $n$ will be called a \emph{gluing distance}. Observe that although the phased block-gluing property is defined for finite rectangular patterns $\l a\r_A, \l b\r_B$, it immediately applies (by using the compactness of shift spaces) to infinite rectangular patterns as well.

From here on fix $d\geq 2$ unless mentioned otherwise. We will now construct some auxiliary graphs which will be useful in the study of the phased block-gluing property. Let $\H_{walk}^d=(X^{d-1}_\H, \E1_{walk}^d)$ be the graph where
$$\E1^d_{walk}=\{(x,y)~:~x_{\m i}\sim_\H y_{\m i}\text{ for all }\m i\in \Z^{d-1}\}.$$

\noindent Given symbols $v,w$ we denote by $(v,w)^{\infty,d-1}\in \{v,w\}^{\Z^{d-1}}$ the checkerboard configuration given by
$$(v,w)^{\infty,d-1}_{\mi}:=\begin{cases}
v&\text{ if }\mi\text{ is in the same partite class as }\m 0\\
w&\text{ otherwise.}
\end{cases}$$

\noindent Similarly $v^{\infty,d-1}$ is the constant configuration given by 
$$v^{\infty,d-1}_\mi:=v \text{ for all }\mi\in \Z^{d-1}.$$

 Let us look at a few examples. 

\begin{enumerate}
\item \label{example: walk space 1}
If $\H$ is a graph with a single edge and vertices $v,w$ then $X^{d-1}_\H$ consists only of the two checkerboard patterns $(v,w)^{\infty,d-1}$ and $(w,v)^{\infty, d-1}$ which are connected to each other in $\H_{walk}^d$.
\item \label{example: walk space 2}
Let $\H$ be the graph in Figure \ref{figure:hardsquaregraph} (the graph for the hard square shift). Since $0,1 \sim_\H 0$, for all $x\in X^{d-1}_\H$, $x\sim_{\H^d_{walk}}0^{\infty,d-1}$. In general, if $\H$ is a graph with a vertex $\star$ such that $\star\sim_\H v$ for all $v\in \H$ (in other words, if the hom-shift $X^{d-1}_\H$ has a so-called safe symbol) then for all $x\in X^{d-1}_\H$, $x \sim_{\H^d_{walk}}\star^{\infty,d-1}$.

\end{enumerate}

The usual graph metric on $\H^d_{walk}$ is denoted by $d^{w}_\H$. Further we say that $d^{w}_\H(x,y):=\infty$ if there is no finite walk from $x$ to $y$. The \emph{diameter} of $\H^d_{walk}$ is denoted by
$$diam(\H^d_{walk}):=\sup_{x, y \in \H^d_{walk}}d^{w}_\H(x,y).$$

The diameter of the graph $\H^d_{walk}$ measures the maximum distance required to transition between two configurations in $X^{d-1}_\H$. Recall the graphs $\H^d_{n, walk}$. They may be thought to `approximate' the graph $\H^d_{walk}$; in fact it follows quite easily that
$$diam(\H^d_{walk})=\infty\text{ if and only if }\lim_{n\longrightarrow \infty}diam(\H^d_{n,walk})=\infty.$$
The proof is left to the reader. Look also at Subsection \ref{subsection: growth of radius}.

As mentioned previously with respect to the graphs $\H^d_{n,walk}$, there is a correspondence between walks $x=p^0, p^1, \ldots, p^k=y$ in $\H^d_{walk}$ from $x$ to $y$ of length $k$ and $\t x \in Hom\left(\Z^{d-1}\square[0,k],\H\right)$ satisfying $\t x_{\mi,0}=x_\mi$ and $\t x_{\mi, k}=y_\mi$. We will use this and similar correspondences throughout the paper.

While the graphs $\H^d_{n, walk}$ were useful in analysing the mixing and transitivity of the hom-shifts $X^d_{\H}$ (as in Proposition \ref{prop: transitivity hom-shifts}), the graph $\H^d_{walk}$ relates to the phased block-gluing property by the following proposition:

\begin{prop}\label{prop:fromshifttowalk}
Let $\H$ be a finite, undirected graph. Then
\begin{enumerate}
\item $X^d_\H$ is block-gluing if and only if there exists an $n\in \N$ such that for all $x, y\in X^{d-1}_\H$ there exists a walk of length $n$ in $\H^d_{walk}$ starting at $x$ and ending at $y$.\label{prop:fromshifttowalk item 1}
\item  $X^d_\H$ is phased block-gluing if and only if $diam(\H^d_{walk})<\infty$. \label{prop:fromshifttowalk item 2}
\item If $\H$ is bipartite and $X^d_\H$ is phased block-gluing then the gluing set can be chosen to be $\{0, \m e_i\}$ for all $1\leq i\leq d$.\label{prop:fromshifttowalk item 4}
\item If $\H$ is not bipartite and $X^d_\H$ is phased block-gluing then $X^d_\H$ is block-gluing.\label{prop:fromshifttowalk item 3}
\end{enumerate}
\end{prop}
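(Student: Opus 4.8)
The plan is to prove the four statements in order, since each builds on the previous one via the correspondence between walks in $\H^d_{walk}$ and homomorphisms on $\Z^{d-1}\square[0,k]$. For item (1), the forward direction: given block-gluing with gluing distance $n$, take two configurations $x,y\in X^{d-1}_\H$; by Proposition~\ref{Proposition: periodic points in Hom shifts} they extend to configurations $\t x, \t y \in X^d_\H$ which are constant in the $\m e_d$-direction. Viewing the slices $\t x|_{\Z^{d-1}\square\{0\}}$ and $\t y|_{\Z^{d-1}\square\{n\}}$ as (infinite) rectangular patterns at $\infty$-distance $n-1$... actually one should be slightly careful: block-gluing is stated for finite rectangular patterns, so I would apply it to finite truncations $\l x\r_{B^{d-1}_m\square\{0\}}$ and $\l y\r_{B^{d-1}_m\square\{n+1\}}$, obtain configurations gluing them, and take a limit as $m\to\infty$ by compactness; the resulting $z\in X^d_\H$ restricted to the slab $\Z^{d-1}\square[0,n+1]$ is (the homomorphism corresponding to) a walk of fixed length from $x$ to $y$. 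Conversely, if every pair $x,y$ is joined by a walk of length exactly $n$ in $\H^d_{walk}$, then given two rectangular patterns $\l a\r_A,\l b\r_B$ with $d_\infty(A,B)\geq n+1$, after rotating coordinates so the separation is realized in the $\m e_d$-direction and extending $a,b$ to full slabs $\Z^{d-1}\square I$ using Proposition~\ref{Proposition: periodic points in Hom shifts}, I glue the two slab-configurations by the length-$n$ walk in the gap and invoke Proposition~\ref{Proposition: periodic points in Hom shifts} again to produce the required $z\in X^d_\H$. The point that all walks can be taken of the \emph{same} length $n$ is what matches block-gluing's uniform gap; note a walk of length $n$ can always be padded to length $n+2$ by stepping to a neighbor and back, so the parity issue is the only constraint.

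For item (2), the phased version, the argument is the same except we no longer demand a walk of a fixed length, only that $d^w_\H(x,y)<\infty$ uniformly, i.e. $diam(\H^d_{walk})<\infty$: different pairs $x,y$ may be joined by walks of different lengths, and the extra translation by an element of the gluing set $S$ exactly absorbs this discrepancy in walk-length. Concretely, if $diam(\H^d_{walk})=D$, take $S=\{j\m e_d : 0\leq j\leq D+1\}$ (with a parity adjustment — one can take all $j$ with $0\le j\le 2D$); given rectangular patterns separated by distance $n$ large enough, extend both to slabs, join them by a walk of the appropriate length $\le D$ plus padding, and read off the needed shift. Conversely, if $X^d_\H$ is phased block-gluing with gluing set $S$ and distance $n$, then for any $x,y\in X^{d-1}_\H$ the definition produces $z\in X^d_\H$ with $z$ agreeing with $\t x$ on one slab and with a shift of $\t y$ on another slab at bounded distance; the slice-to-slice portion of $z$ is a walk in $\H^d_{walk}$ from $x$ to $y$ of length at most $n+\max_{\m s\in S}|\m s|_\infty$, so $d^w_\H(x,y)$ is bounded independently of $x,y$, giving $diam(\H^d_{walk})<\infty$.

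For item (3): suppose $\H$ is bipartite with partite classes $V_1,V_2$, and $X^d_\H$ is phased block-gluing, so by item (2) the diameter is finite. Here the key observation is a \emph{parity} phenomenon: a walk $p^0,\dots,p^k$ in $\H^d_{walk}$ connecting $x$ to $y$ forces, for each $\mi$, the vertices $p^0_\mi=x_\mi$ and $p^k_\mi=y_\mi$ to lie in the same partite class iff $k$ is even. Since $\H$ is bipartite, $X^{d-1}_\H = X_1\cup X_2$ (configurations with $x_{\m 0}\in V_1$ resp. $V_2$), and within each $X_i$ the "phase" of a configuration at every site is determined by its phase at $\m 0$; moving between $X_1$ and $X_2$ flips this, which costs exactly one step of odd parity. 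So if $x,y$ are in the same class $X_i$ they are joined by an even-length walk (of length $\le 2D$ after padding), hence by a homomorphism on a slab of even height — this corresponds to gluing with no shift, $\m i=\m 0$; if $x,y$ lie in different classes, an odd-length walk is needed, corresponding to a shift by $\m e_d$ (or equivalently by $\m e_i$ for any $i$, since one can rotate the roles of the coordinate directions, all being symmetric for a hom-shift). Hence the gluing set reduces to $\{\m 0,\m e_i\}$. I expect this is the most delicate step, because one has to track the parity bookkeeping carefully and verify that padding a walk by "out-and-back" steps preserves both endpoints and adds $2$ to the length, so that every pair is connected by a walk of every sufficiently large length of the correct parity — this is the analogue of the standard period-$p$ decomposition of irreducible SFTs in $d=1$ alluded to after Corollary~\ref{corollary:biparite2zdactiontransitive}.

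Finally, item (4): if $\H$ is not bipartite, then it contains an odd cycle, so $\H^d_{walk}$ is not bipartite (stack the odd cycle constantly to get an odd closed walk), and more to the point, for any $x$ there is an odd-length closed walk at $x$: using the odd cycle at a single coordinate one builds a closed walk of some odd length $2\ell+1$ at any configuration. Combined with the finite diameter $D$ from phased block-gluing and the ability to pad walks by $2$, this shows that any two $x,y\in X^{d-1}_\H$ are joined by a walk of \emph{every} sufficiently large length $k\ge N$ (both parities), for a uniform $N$ depending only on $\H$. Then the translation by $S$ in the phased definition becomes unnecessary: given rectangular patterns at distance $\ge N$, rotate so the separation is in the $\m e_d$-direction, extend to slabs, and join them by a walk whose length equals the actual gap (no leftover shift needed), so by Proposition~\ref{Proposition: periodic points in Hom shifts} we get an honest block-gluing with gluing set $\{\m 0\}$. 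I would present this by first proving the uniform "all large lengths" claim for $\H^d_{walk}$ as a lemma, using items (1)/(2), the odd cycle, and the padding trick, and then deducing block-gluing. The main obstacle throughout is keeping the bookkeeping between "walks in $\H^d_{walk}$", "homomorphisms on slabs $\Z^{d-1}\square[0,k]$", and "shifts/translations of patterns in $X^d_\H$" consistent, together with the parity tracking in item (3).
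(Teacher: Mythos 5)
Your route is essentially the paper's: translate (phased) block-gluing into statements about walks in $\H^d_{walk}$ via the correspondence between length-$k$ walks and homomorphisms on $\Z^{d-1}\square[0,k]$, use Proposition~\ref{Proposition: periodic points in Hom shifts} to pass between finite rectangular patterns and full slabs, get the diameter bound from the phased gluing data (absorbing the horizontal part of the shift by the fact that a configuration is adjacent to its unit translates), and for part (4) combine finite diameter with aperiodicity coming from an odd cycle to obtain walks of one uniform length and then invoke part (1). However, there is one step you explicitly leave open and which, as written, breaks the converse of part (1): padding a walk by an out-and-back step changes its length by $2$, so your construction only glues rectangular patterns whose separation has one fixed parity, whereas block-gluing requires gluing at \emph{every} separation $\geq n$; saying ``the parity issue is the only constraint'' names the problem but does not solve it. The resolution (the one the paper uses) is that the hypothesis gives a walk of length exactly $n$ between \emph{every} pair: walk from $x$ to a neighbour $z$ of $y$ in $n$ steps (such a neighbour exists by Proposition~\ref{Proposition: periodic points in Hom shifts}) and then step to $y$, so walks of every length $m\geq n$ exist and no parity restriction survives.

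Two smaller imprecisions, both fixable within your framework. In part (3), which shift in $\{\m 0,\m e_i\}$ is needed is governed by whether the parity of the gap between the two slab faces matches the parity of $d^{w}_\H$ between the corresponding slices, not merely by whether $x$ and $y$ lie in the same partite class as you state; the paper's proof of the converse of (2) handles this directly by producing a walk of length $m$ or $m-1$ (finite diameter plus padding by $2$) and absorbing the one-step defect with a single unit shift, so that (3) requires no separate argument at all. In part (4), the claim that every configuration carries an odd closed walk ``using the odd cycle at a single coordinate'' is not justified as stated — you cannot in general move one coordinate around a cycle while freezing the others; the clean version is that the checkerboard configurations built from an odd cycle $u_0,\dots,u_{2\ell}$ of $\H$ give one odd closed walk in $\H^d_{walk}$, and the finite diameter you already have transports it to any base point (go there, around, and back, adding an even number of steps). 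With these repairs your uniform-length lemma and part (1) finish part (4) exactly as in the paper.
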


\begin{proof}[Proof of Part (\ref{prop:fromshifttowalk item 1}) of Proposition \ref{prop:fromshifttowalk}]

Suppose that $X^d_\H$ is block-gluing with gluing distance $n$. Let $x, y \in X^{d-1}_\H$. We can identify them as elements of $Hom(\Z^{d-1}\square\{0\}, \H)$ and $Hom(\Z^{d-1}\square \{n\}, \H)$ respectively. By the block-gluing property there exists $z\in X^{d}_\H$ for which
$z|_{\Z^{d-1}\square \{0\}}=x$ and $z|_{\Z^{d-1}\square{\{n\}}}=y$. Equivalently we have found a walk of length $n$ in $\H^d_{walk}$ from $x$ to $y$.

Conversely suppose that for all $x, y\in X^{d-1}_\H$ there exists a walk of length $n$ starting at $x$ and ending at $y$. Since we can always lengthen such a walk by revisiting a configuration adjacent to $y$, it follows that for all $x, y\in X^{d-1}_\H$, $m\geq n$ there is a walk of length $m$ from $x$ to $y$.

We would like to prove that $X^{d}_\H$ is block-gluing with block-gluing distance $n$. Let $\langle a \rangle_A, \langle b\rangle_B$ be two rectangular patterns in $X^{d}_\H$ such that $d_{\infty}(A, B)=m$. Using the symmetry and isotropy in hom-shifts and translating the patterns (if necessary), by Proposition \ref{Proposition: periodic points in Hom shifts} we can assume that $A\subset \Z^{d-1}\square [-r,r]$ and $B\subset \Z^{d-1}\square[m+r, m+r+k]$ for some $r, k\in \N$. Consider $\t y\in Hom(\Z^{d-1}\square [-r,r], \H)$ and $\t z\in Hom(\Z^{d-1}\square[m+r, m+r+k], \H)$ such that $\t y|_A=a$ and $\t z|_B=b$. Then there exists a walk $p^0, p^1, \ldots, p^m$ from $\t y|_{\Z^{d-1}\square{\{r\}}}$ to $\t z|_{\Z^{d-1}\square\{m+r\}}$ in $\H^d_{walk}$. Hence we get a homomorphism $\t x\in Hom(\Z^{d-1}\square [-r,m+r+k], \H)$ such that $\t x|_{\Z^{d-1}\square[-r,r]}=\t y$ and $\t x|_{\Z^{d-1}\square[m+r, m+r+k]}=\t z$. By Proposition \ref{Proposition: periodic points in Hom shifts} there exists $x\in X^{d}_\H$ such that $x|_{A}=a$ and $x|_{B}=b$. 
\end{proof}

In the following proof by $|\cdot |_1$ we mean the $l_1$ metric on $\R^d$.

\begin{proof}[Proof of Part (\ref{prop:fromshifttowalk item 2}) of Proposition \ref{prop:fromshifttowalk}]
Suppose that $X^d_\H$ is phased block-gluing with gluing distance $n$ and gluing set $S$. Choose $m\geq n$ large enough such that $m>|\m i|_1$ for all $\mi \in S$. Let $x, y\in X^{d-1}_\H$ be given.  As before we identify $x$ and $y$ as configurations in $Hom(\Z^{d-1}\square\{0\})$ and $Hom(\Z^{d-1}\square\{m\})$ respectively. By the phased block-gluing property there exists $z\in X^d_\H$ such that $z|_{\Z^{d-1}\square\{0\}}=x$ and $\sigma^{\mi}(z)|_{\Z^{d-1}\square \{m\}}=y$ for some $\mi \in S$. Write $\mi=(\mi^f, i_d)$ where $\mi^f\in \Z^{d-1}$. Then  
$$z_{\mj, m+i_d}=y_{\mj-\m i^f}\text{ for all }\mj\in \Z^{d-1}.$$
 Thus we have obtained a walk from $x$ to $\sigma^{-\mi^f}(y)$ in $\H^d_{walk}$ of length $m+i_d$. By using the fact that $z'\sim_{\H^d_{walk}}\sigma^{\m e_j^{d-1}}(z')$ for all $1\leq j\leq {d-1}$ and $z'\in X^{d-1}_{\H}$ we get a walk from $\sigma^{-\mi^f}(y)$ to $y$ of length $|-\mi^f|_1$. Thus
$$diam(\H^d_{walk})\leq \max_{\mi \in S} (m + |\mi|_1).$$

Now let us prove the converse. Suppose $diam(\H^d_{walk})< n<\infty$. Let $1\leq j \leq d$, $S=\{\m 0, \m e^d_j\}$ and $\langle a\rangle_A, \langle b\rangle_B\in \L(X^d_\H)$ be rectangular patterns such that $d_{\infty}(A, B)=m\geq n+1$. We can assume that $A\subset \Z^{d-1}\square [-r,r]$ and $B\subset \Z^{d-1}\square[m+r, m+r+k]$ for some $r, k\in \N$. Consider $\t y\in Hom(\Z^{d-1}\square [-r,r], \H)$ and $\t z\in Hom(\Z^{d-1}\square[m+r, m+r+k], \H)$ such that $\t y|_A=a$ and $\t z|_B=b$. There is a walk of length either $m-1$ or $m$ from $\t y|_{ \Z^{d-1}\square\{r\}}$ to $\t z|_{\Z^{d-1}\square\{m+r\}}$ since there is always a walk of length $2$ from any vertex in $\H^d_{walk}$ to itself.

\begin{enumerate}[{Case }(1){: }]
\item\emph{A walk of length $m$ is found:} We get $\t x\in Hom(\Z^{d-1}\square[-r,m+r+k], \H)$ such that $\t x|_{\Z^{d-1}\square [-r,r]}= \t y$ and $\t x|_{\Z^{d-1}\square [m+r, m+r+k]}= \t z$. By Proposition \ref{Proposition: periodic points in Hom shifts} there exists $x\in X^d_\H$ such that $x|_{A}=a$ and $x|_B=b$.

\item\emph{A walk of length $m-1$ is found:} This is similar to the previous case; just replace the pattern $\t z$ by $\sigma^{-\m e_j^d}(\t z)$.

\end{enumerate}
\end{proof}
\begin{proof}[Proof of Part (\ref{prop:fromshifttowalk item 4}) of Proposition \ref{prop:fromshifttowalk}]Note that we have proved that the phased block-gluing property for $X^d_\H$ implies that  $diam(\H^d_{walk})<\infty$ and that $diam(\H^d_{walk})<\infty$ implies that $X^d_\H$ has the phased block-gluing property where the gluing set $S$ can be chosen to be $\{\m 0, \m e_i\}$ for $1\leq i\leq d$. Thus, if $X^d_\H$ is phased block-gluing then the gluing set $S$ can be chosen to be $\{\m 0,\m e_i\}$ for $1\leq i\leq d$. 
\end{proof}

\begin{proof}[Proof of Part (\ref{prop:fromshifttowalk item 3}) of Proposition \ref{prop:fromshifttowalk}]
Suppose $\H$ is a finite, undirected graph which is not bipartite and $X^d_\H$ is phased block-gluing. If $\H$ is a single vertex with a self-loop then $\H^d_{walk}$ is single configuration with a self-loop as well; there is nothing to prove. If $\H$ is not a single vertex with a self-loop then since $\H$ is not bipartite there exist cycles of even and odd length in $\H$ and (hence) in $\H^d_{walk}$. Thus the graph $\H^d_{walk}$ is aperiodic.

Moreover since $X^d_\H$ is phased block-gluing, from Part (\ref{prop:fromshifttowalk item 2}) of this Proposition we know that $\H^d_{walk}$ has finite diameter. Since $\H^d_{walk}$ is aperiodic and has finite diameter, from standard arguments (look in \cite[Lemma 6.6.3]{PTEdurret}) one can prove that the adjacency matrix of the graph $\H^d_{walk}$ is primitive, meaning, there exists $m\in \N$ such that for every $x, y\in X^{d-1}_\H$ there exists a walk of length $m$ from $x$ to $y$ in $\H^d_{walk}$. By Part (\ref{prop:fromshifttowalk item 1}) the proof is complete.
\end{proof}

In exactly the same way, the phased SI property can also be defined: A shift space $X$ is said to be \emph{phased SI} if there exists an $n\in \N$ and a finite set $S\subset \Z^d$ such that for all patterns $\l a \r_A, \l b \r_B\in \L(X)$ satisfying $d_{\infty}(A, B)\geq n$ there exists $x\in X$ such that $x|_A=a$ and $\sigma^{\m i}(x)|_B=b$ for some $\m i \in S$. $S$ will be called an \emph{SI gluing set} of $X$ and $n$ will be called an \emph{SI gluing distance}.
\begin{prop}
\label{prop:Phaed SI}
Let $\H$ be a finite, undirected graph. Then
\begin{enumerate}
\item
If $\H$ is bipartite and $X^d_{\H}$ is phased SI, the SI gluing set can be chosen to be $\{\m 0,\m e_i \}$ for all $1\leq i\leq d$.
\item
If $\H$ is not bipartite and $X^d_{\H}$ is phased SI then it is SI.
\end{enumerate}
\end{prop}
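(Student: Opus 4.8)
The plan is to mirror the argument for the phased block-gluing case (Parts (\ref{prop:fromshifttowalk item 2}), (\ref{prop:fromshifttowalk item 4}) and (\ref{prop:fromshifttowalk item 3}) of Proposition \ref{prop:fromshifttowalk}), replacing rectangular patterns by arbitrary finite-shape patterns throughout. The key observation is that the reduction from the shift space to the graph $\H^d_{walk}$ used in Proposition \ref{prop:fromshifttowalk} never really exploited that the shapes $A$ and $B$ were rectangular in an essential way once the patterns have been extended to full $(d-1)$-dimensional slabs $\Z^{d-1}\square[-r,r]$ and $\Z^{d-1}\square[m+r,m+r+k]$: Proposition \ref{Proposition: periodic points in Hom shifts} extends an arbitrary pattern on a finite shape $A$ contained in such a slab to a homomorphism on the whole slab. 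So the first step is to record that the phased SI property of $X^d_\H$ is equivalent to the following statement about $\H^d_{walk}$: there exist $n\in\N$ and a finite $S^f\subset\Z^{d-1}$ such that for all $x,y\in X^{d-1}_\H$ there is a walk in $\H^d_{walk}$ of length $m$ from $x$ to $\sigma^{\mi^f}(y)$ for some $\mi^f\in S^f$, for every $m\geq n$ — i.e. essentially a ``phased'' version of the conclusion of Part (\ref{prop:fromshifttowalk item 1}) with the added feature that one is allowed a phase in the $(d-1)$ ``horizontal'' directions as well. The extra work relative to the block-gluing case is simply that one no longer controls the diameter of $\H^d_{walk}$; one only controls distances up to a horizontal shift, which is why the gluing set $S$ must a priori contain vectors with nonzero horizontal component.

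For Part (1), assume $\H$ is bipartite and $X^d_\H$ is phased SI with gluing set $S$ and gluing distance $n$. Given $x,y\in X^{d-1}_\H$, identify them with homomorphisms on $\Z^{d-1}\square\{0\}$ and $\Z^{d-1}\square\{m\}$ for $m$ large, and use phased SI to obtain $z\in X^d_\H$ and $\mi=(\mi^f,i_d)\in S$ with $z|_{\Z^{d-1}\square\{0\}}=x$ and $\sigma^{\mi}(z)|_{\Z^{d-1}\square\{m\}}=y$; this gives a walk of length $m+i_d$ from $x$ to $\sigma^{-\mi^f}(y)$ in $\H^d_{walk}$, and then, exactly as in the proof of Part (\ref{prop:fromshifttowalk item 2}), one appends a walk of length $|\mi^f|_1$ realizing $\sigma^{-\mi^f}(y)\sim\cdots\sim y$ using $z'\sim_{\H^d_{walk}}\sigma^{\m e^{d-1}_j}(z')$. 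The crucial point now is parity: since $\H$ is bipartite, so is $\H^d_{walk}$, and every closed walk in it has even length; since the configuration $y^{\infty,d-1}$-type shift relation produces closed-up walks, the length $m+i_d+|\mi^f|_1$ of the composed walk from $x$ to $y$ has a fixed parity determined only by which partite classes $x$ and $y$ sit in. Hence $\H^d_{walk}$ restricted to each partite class is connected and, being bipartite-free on each class in the relevant sense, one gets a uniform bound $K$ so that for all $x,y$ there is a walk of length $m$ from $x$ to $y$ whenever $m\geq K$ and $m$ has the correct parity — and $\sigma^{\m e_i^d}$ flips that parity. Feeding this back through the extension-of-patterns argument (Case (1)/Case (2) as in the proof of Part (\ref{prop:fromshifttowalk item 2})) yields the SI gluing set $\{\m 0,\m e_i\}$ for any $1\leq i\leq d$.

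For Part (2), assume $\H$ is not bipartite and $X^d_\H$ is phased SI. If $\H$ is a single vertex with a self-loop the statement is trivial, so assume otherwise; then $\H^d_{walk}$ contains closed walks of both even and odd length, hence is aperiodic. The phased SI hypothesis, via the reformulation above, gives that for all $x,y\in X^{d-1}_\H$ there is a walk of every sufficiently large length from $x$ to $\sigma^{\mi^f}(y)$ for some $\mi^f$ in a fixed finite set; combining this with aperiodicity and a primitivity argument as in \cite[Lemma 6.6.3]{PTEdurret} (the same one invoked for Part (\ref{prop:fromshifttowalk item 3})), one upgrades this to: there exists $m$ such that for all $x,y$ there is a walk of length exactly $m$ from $x$ to $y$ — here one absorbs the bounded horizontal phase $\mi^f$ into the walk using $z'\sim_{\H^d_{walk}}\sigma^{\m e^{d-1}_j}(z')$ and then uses aperiodicity to restore a common length. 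Then the extension argument (now for arbitrary finite shapes $A$, $B$, using Proposition \ref{Proposition: periodic points in Hom shifts} to fill in the slabs) shows $X^d_\H$ is SI with gluing distance $m$ and gluing set $\{\m 0\}$.

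I expect the main obstacle to be the bookkeeping in Part (1): one must be careful that the parity of the walk length in $\H^d_{walk}$ from $x$ to $y$ is genuinely an invariant (depending only on the partite classes of $x$ and $y$ inside $X^{d-1}_\H$, which is itself a disjoint union of two pieces by Corollary \ref{corollary:biparite2zdactiontransitive}), and that the horizontal phase correction $\sigma^{-\mi^f}$ contributes a length $|\mi^f|_1$ whose parity is consistent with this — otherwise the reduction to the gluing set $\{\m 0,\m e_i\}$ would fail. The non-bipartite Part (2), by contrast, should be a routine transcription of the Part (\ref{prop:fromshifttowalk item 3}) argument with ``rectangular'' deleted.
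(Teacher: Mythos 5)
There is a genuine gap, and it is at the very first step of your reduction. The claimed equivalence between phased SI of $X^d_\H$ and your ``phased walk condition'' on $\H^d_{walk}$ is false in the direction you actually use. The walk condition you state (walks of every length $m\geq n$ between any $x,y\in X^{d-1}_\H$, up to a bounded shift) is just a reformulation of (phased) block-gluing, i.e.\ of Parts (\ref{prop:fromshifttowalk item 1})--(\ref{prop:fromshifttowalk item 2}) of Proposition \ref{prop:fromshifttowalk}, and block-gluing does not imply phased SI: the graph of Figure \ref{figure:SInotlbock} has $diam(\H^d_{walk})\leq 8$, so it satisfies your walk condition, yet $X^2_\H$ is not phased SI (Subsection \ref{subsection:Why is Four-Cycle Hom-Free Condition Necessary}); likewise $K_4$ is non-bipartite with $diam((K_4)^3_{walk})\leq 4$ (Proposition \ref{Prop: 4colouringfinitediameter}), so your Part (2) argument (walk condition plus aperiodicity) would force $X^3_{K_4}$ to be SI, whereas it is not even phased SI. The reason rectangularity was essential in Proposition \ref{prop:fromshifttowalk} is geometric: two rectangular shapes at $l_\infty$-distance at least $n$ can be separated by a coordinate hyperplane and hence placed in disjoint parallel slabs $\Z^{d-1}\square[-r,r]$ and $\Z^{d-1}\square[m+r,m+r+k]$, after which gluing is exactly a walk in $\H^d_{walk}$. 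Two arbitrary finite shapes at distance $n$ need not be separable this way (take $A$ a box and $B$ a shell surrounding it), so a walk between slab configurations gives no way to glue them, and the closing steps of both of your parts (``feeding this back through the extension-of-patterns argument'', ``the extension argument, now for arbitrary finite shapes'') do not go through. A smaller inaccuracy: Proposition \ref{Proposition: periodic points in Hom shifts} only extends patterns on rectangular (box-shaped) domains; for a pattern on an arbitrary finite shape, membership in $\L(X^d_\H)$ already provides an extension to a configuration, but this does not rescue the reduction.

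The paper deliberately abandons the walk graph here. It invokes a different characterization of SI for nearest neighbour SFTs: $X$ is SI if and only if there is $N$ such that for all $n\geq N$ and finite $A\subset\Z^d$, any globally allowed pattern on $A$ can be glued to any globally allowed pattern on the shell $\partial_n A\setminus\partial_{n-1}A$. The shift-removal arguments of Proposition \ref{prop:fromshifttowalk} (parity considerations in the bipartite case, even and odd cycles and aperiodicity in the non-bipartite case) are then run on this shell formulation, which handles the surrounding geometry that defeats the slab reduction. To repair your proof you would need to apply the phased SI hypothesis directly to such annular configurations rather than translating everything into distances in $\H^d_{walk}$; your parity bookkeeping for Part (1) is a secondary concern compared to this.
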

Since the arguments for the proof of this proposition are similar to those in the proof of Proposition \ref{prop:fromshifttowalk}, we will not repeat them here. Roughly speaking, in Proposition \ref{prop:fromshifttowalk} we obtained the result by translating the question into one about walks on the auxiliary graphs $\H^d_{walk}$. For SI we can use the following simple equivalence instead: Given a set $A\subset \Z^d$ let 
$$\partial_r A\:=\{\mi \in \Z^d\setminus A~:~|\mi-\mj|_1\leq r\text{ for some }\mj\in A\}.$$
A nearest neighbour SFT $X$ is SI if and only if there is an $N\in \N$  such that for all $n\geq N$, finite $A\subset \Z^d$ and $\langle a\rangle_A, \langle b\rangle_{\partial_n A\setminus \partial_{n-1} A}\in \L(X)$, there exists $x\in X$ such that $x|_{A}=a$ and $x|_{\partial_n A\setminus \partial_{n-1} A}=b$.

As in Corollary \ref{corollary:biparite2zdactiontransitive} we can also conclude:
\begin{corollary}\label{corollary:bipartite2zdactionblock}
Let $\H$ be a bipartite finite undirected graph. If $X^d_\H$ is phased block-gluing/phased SI then $X^d_\H$ is a union of two disjoint conjugate SFTs with respect to the $(2\Z)^d$ action which are block-gluing/SI respectively.
\end{corollary}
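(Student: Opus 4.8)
The plan is to follow the recipe sketched just before Corollary~\ref{corollary:biparite2zdactiontransitive}, with ``mixing''/``connectivity of $\H^d_{n,walk}$'' replaced by ``block-gluing''/``finiteness of $diam(\H^d_{walk})$'' (and, for the SI clause, by SI together with the bounded-width gluing statement recalled after Proposition~\ref{prop:Phaed SI}). Write $V_1,V_2$ for the partite classes of $\H$ and set $X_i:=\{x\in X^d_\H : x_{\m 0}\in V_i\}$. Since the $\H$-class of $x_\mi$ is determined by $x_{\m 0}$ and the parity of $\mi$, one has $X^d_\H=X_1\sqcup X_2$, each $X_i$ is closed and invariant under the $(2\Z)^d$-action, $\sigma^{\m e_1}$ carries $X_1$ onto $X_2$ and intertwines the $(2\Z)^d$-actions, and after the standard recoding of the $(2\Z)^d$-action by $2\times\dots\times 2$ blocks each $X_i$ becomes a nearest neighbour SFT (the constraint ``$x_{\m 0}\in V_i$'' becomes a restriction on single recoded symbols). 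So it suffices to show that $X_1$ is block-gluing, respectively SI, for the $(2\Z)^d$-action. The only new structural input compared with the non-bipartite case is that $\H^d_{walk}$ is itself a connected bipartite graph, with vertex classes $Y_i:=\{z\in X^{d-1}_\H : z_{\m 0}\in V_i\}$; since $diam(\H^d_{walk})<\infty$ in either case (Part~(\ref{prop:fromshifttowalk item 2}) of Proposition~\ref{prop:fromshifttowalk}, noting that phased SI implies phased block-gluing), the square of its adjacency operator restricted to $Y_1$ (and to $Y_2$) is aperiodic of finite diameter, hence primitive by exactly the argument used in Part~(\ref{prop:fromshifttowalk item 3}) of that proposition (which invokes \cite[Lemma 6.6.3]{PTEdurret}). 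I would record this as: there is an even $n_0$ such that any two configurations in a common class $Y_i$ are joined in $\H^d_{walk}$ by a walk of length exactly $\ell$ for every even $\ell\ge n_0$ (and hence any two configurations in different classes by a walk of every odd length $\ge n_0+1$).

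For the block-gluing clause I would take $n_0$ as the gluing distance and imitate the proof of Part~(\ref{prop:fromshifttowalk item 1}) of Proposition~\ref{prop:fromshifttowalk}. Given rectangular patterns $\langle a\rangle_A,\langle b\rangle_B\in\L(X_1)$ of $X_1$ over $(2\Z)^d$ — so $A,B$ are rectangles all of whose corners are even points of $\Z^d$ — with $d_\infty(A,B)\ge n_0$, note first that $m:=d_\infty(A,B)$ is even. A coordinate automorphism of $\Z^d$ (these fix $\m 0$, preserve $X^d_\H$ and $X_1$, and send the pair to an equivalent one) followed by an even translation lets me assume $A\subset\Z^{d-1}\square[0,r]$ and $B\subset\Z^{d-1}\square[r+m,r+m+k]$ with $r$ even. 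I extend $a,b$ to homomorphisms $\t a,\t b$ on those two slabs by Proposition~\ref{Proposition: periodic points in Hom shifts}; since $r$ and $r+m$ are both even, $\t a|_{\Z^{d-1}\square\{r\}}$ and $\t b|_{\Z^{d-1}\square\{r+m\}}$ lie in the same class $Y_i$, so by the previous paragraph they are joined in $\H^d_{walk}$ by a walk of length exactly $m$. Assembling this walk with $\t a,\t b$ as in Part~(\ref{prop:fromshifttowalk item 1}) produces a homomorphism on $\Z^{d-1}\square[0,r+m+k]$, which Proposition~\ref{Proposition: periodic points in Hom shifts} extends to some $x\in X^d_\H$ with $x|_A=a$ and $x|_B=b$; and $x\in X_1$, because the value $x_{\mi_0}=a_{\mi_0}$ at any chosen $\mi_0\in A$ pins the single colour bit of $x\in X^d_\H$ to the one characterising $X_1$. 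Then $X_2=\sigma^{\m e_1}(X_1)$ is block-gluing, being conjugate to $X_1$.

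For the SI clause I would argue in the same way, using the boundary characterisation of SI recalled after Proposition~\ref{prop:Phaed SI}: phased SI of $X^d_\H$ bounds the width of an annulus $\partial_n A\setminus\partial_{n-1}A$ across which a pattern on $A$ and a pattern on that annulus must be glued, and combining this bounded width with the primitivity/parity input above upgrades it to SI of $X_1$ for the $(2\Z)^d$-action, exactly as phased SI was upgraded to SI in the non-bipartite case of Proposition~\ref{prop:Phaed SI}; replacing a slab by an annular region changes nothing in the combinatorics. The step I expect to be the only real obstacle is the parity bookkeeping forced by allowing only even translations: one must check that the relevant separation between the two patterns is always of the correct (necessarily even) parity and at least $n_0$, and that the configuration produced by Proposition~\ref{Proposition: periodic points in Hom shifts} lands in $X_1$ and not $X_2$. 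Both become routine once the single colour bit of a configuration of $X^d_\H$ is tracked, but that is where all the care in the proof goes.
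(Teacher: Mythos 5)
Your block-gluing clause is correct, but it follows a different route from the paper's. The paper's (deliberately terse) proof is simply to invoke Part (\ref{prop:fromshifttowalk item 4}) of Proposition \ref{prop:fromshifttowalk} and Part (1) of Proposition \ref{prop:Phaed SI}: the gluing set/SI gluing set can be taken to be $\{\m 0,\m e_i\}$, and then for two patterns of $X_1$ the translate $\m e_i$ is excluded by parity. Indeed, if $\l a\r_A,\l b\r_B\in\L(X_1)$ and $x\in X^d_\H$ satisfies $x|_A=a$, then $x\in X_1$ (the class of $x_{\m 0}$ is forced through connectedness of $\H$), so $x_{\mj+\m e_i}$ lies in the partite class of opposite parity to $\mj$, while $b_\mj$ lies in the class of the parity of $\mj$; hence $\sigma^{\m e_i}(x)|_B=b$ is impossible and the gluing must occur with translate $\m 0$, giving block-gluing/SI of $X_1$ for the $(2\Z)^d$ action in one stroke. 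You instead go back to the walk graph: bipartiteness and finite diameter of $\H^d_{walk}$ give an even $n_0$ such that same-class configurations are joined by walks of every even length $\geq n_0$ (no primitivity machinery is really needed here, only the extend-a-walk-by-two trick, since $\H^d_{walk}$ is infinite and \cite[Lemma 6.6.3]{PTEdurret} is a finite-matrix statement), and you then reassemble slabs as in Part (\ref{prop:fromshifttowalk item 1}). This works: the parity of the required walk length automatically agrees with the parity of the two boundary slices once both patterns extend to configurations of $X_1$, so your insistence that the separation be even is harmless bookkeeping, and the constructed configuration does land in $X_1$ as you say. What it buys is independence from Part (\ref{prop:fromshifttowalk item 4}); what it costs is that it only treats rectangular patterns.

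The SI clause is where your write-up has a genuine soft spot. Your walk/parity input is intrinsically a statement about gluing across a slab $\Z^{d-1}\square[0,k]$, whereas for SI the shapes $A,B$ are arbitrary; the sentence ``replacing a slab by an annular region changes nothing in the combinatorics'' is precisely the step that is not justified, since the annulus statement recalled after Proposition \ref{prop:Phaed SI} is a criterion for SI, not something your slab walks produce, and the non-bipartite upgrade in Proposition \ref{prop:Phaed SI} that you appeal to is itself not spelled out in the paper. The repair is the parity argument of the first paragraph applied directly to the phased SI statement (Part (1) of Proposition \ref{prop:Phaed SI}) rather than to a walk construction: this is exactly the ``tracking the colour bit'' you mention at the end, it needs no restriction on the shapes, and it handles both clauses of the corollary uniformly.
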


This follows from the fact that for a phased block-gluing/phased SI hom-shift, the gluing set/SI gluing set can be chosen to be $\{0, \m e_i\}$ for all $1\leq i \leq d$. The proof is left to the reader.

We will need the following `monotonicity' result:

\begin{prop}\label{proposition: mononotinicity of dimensiosn}
Let $\H$ be a finite undirected graph and $d_1<d_2$. If $X^{d_1}_\H$ is not phased block-gluing/phased SI then $X^{d_2}_\H$ is not phased block-gluing/phased SI.
\end{prop}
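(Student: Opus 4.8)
The plan is to argue the contrapositive in the natural direction: I will show that if $X^{d_2}_\H$ is phased block-gluing (resp.\ phased SI), then so is $X^{d_1}_\H$, for any $d_1 < d_2$. By induction it suffices to treat the case $d_2 = d_1 + 1$, i.e.\ to show that the phased block-gluing (resp.\ phased SI) property passes from dimension $d+1$ down to dimension $d$. The key observation is that $X^d_\H$ embeds into $X^{d+1}_\H$ as the subsystem of configurations that are constant in the last coordinate: given $x \in X^d_\H$, define $\iota(x) \in X^{d+1}_\H$ by $\iota(x)_{(\mi, k)} := x_{\mi}$ for $\mi \in \Z^d$, $k \in \Z$; this is a valid homomorphism since the extra edges in the $\m e_{d+1}$ direction connect a vertex of $\H$ to itself only when $\H$ has a self-loop at that vertex — wait, more carefully, $\iota(x)_{(\mi,k)} = \iota(x)_{(\mi,k+1)}$, so we need a self-loop. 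So this naive embedding does not work directly; instead I would use the ``doubling'' trick: lay down $x$ on the even layers and a fixed walk-neighbour of $x$ on the odd layers, or more simply observe that patterns of $X^d_\H$ correspond exactly to patterns of $X^{d+1}_\H$ supported on a single $\Z^d$-slice $\Z^d \square \{0\}$. That is, $\L_A(X^d_\H) = \L_{A \times \{0\}}(X^{d+1}_\H)$ for every finite $A \subset \Z^d$, which is immediate from Proposition~\ref{Proposition: periodic points in Hom shifts} (any homomorphism on a $d$-dimensional rectangular pattern extends to a configuration in $X^{d+1}_\H$, viewing it as a degenerate $(d+1)$-dimensional rectangular pattern of thickness one).

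With this identification in hand, the argument is as follows. Suppose $X^{d+1}_\H$ is phased block-gluing with gluing distance $n$ and gluing set $S \subset \Z^{d+1}$. Let $\l a \r_A, \l b \r_B$ be rectangular patterns in $X^d_\H$ with $d_\infty(A,B) \geq n$, where $A, B \subset \Z^d$. View them as rectangular patterns $\l a \r_{A \times \{0\}}, \l b \r_{B \times \{0\}}$ in $X^{d+1}_\H$; since $d_\infty(A \times \{0\}, B \times \{0\}) = d_\infty(A,B) \geq n$, phased block-gluing in dimension $d+1$ gives $z \in X^{d+1}_\H$ and $\mi \in S$ with $z|_{A \times \{0\}} = a$ and $\sigma^{\mi}(z)|_{B \times \{0\}} = b$. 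Restricting $z$ to the slice $\Z^d \square \{0\}$ (if the last coordinate of $\mi$ is zero) gives a configuration in $X^d_\H$ gluing $a$ to a shift of $b$ by the projection $\mi^f$ of $\mi$ onto $\Z^d$; in general one restricts to the slice $\Z^d \square \{0\}$ for $a$ and uses that the relevant portion of $b$ lives on a slice at height $i_{d+1}$, then projects down — the point being that the $(d+1)$-st coordinate is irrelevant once we only care about matching patterns on $d$-dimensional shapes. Hence $X^d_\H$ is phased block-gluing with gluing set the projection $\pi(S) \subset \Z^d$ and the same gluing distance. The phased SI case is identical, using arbitrary (not just rectangular) shapes $A, B$, and the same slicing-and-projecting.

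I expect the main obstacle to be purely bookkeeping: making the correspondence $\L_A(X^d_\H) \leftrightarrow \L_{A \times \{0\}}(X^{d+1}_\H)$ and the ``forget the last coordinate'' projection of configurations completely precise, in particular handling the case where the shift $\mi \in S$ has a nonzero last component (one must check that restricting $z$ and $\sigma^{\mi}(z)$ to a common $\Z^d$-slice still witnesses the gluing, which it does because the two patterns $a$ and $b$ are each supported on a single slice and we are free to choose which slice of $z$ to read each from). There is no real analytic or combinatorial difficulty here — the content is just that a $d$-dimensional hom-shift sits inside a $(d+1)$-dimensional one as a coordinate slice, and that all the phased mixing properties are inherited by such slices. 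The inductive step from $d_2 = d_1+1$ to general $d_1 < d_2$ is then trivial.
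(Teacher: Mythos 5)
There is a genuine gap at precisely the point you dismiss as bookkeeping: the case where the gluing shift $\mi=(\mi^f,i_{d+1})\in S\subset\Z^{d+1}$ has $i_{d+1}\neq 0$. In that case $z|_{A\times\{0\}}=a$ places $a$ on the slice $\Z^{d}\square\{0\}$, while $\sigma^{\mi}(z)|_{B\times\{0\}}=b$ says $z_{(\mj+\mi^f,\,i_{d+1})}=b_{\mj}$, i.e.\ the shifted copy of $b$ sits on the slice $\Z^{d}\square\{i_{d+1}\}$. To conclude that $X^{d}_\H$ is phased block-gluing you must exhibit a \emph{single} configuration $x\in X^{d}_\H$ with $x|_A=a$ and $\sigma^{\mj}(x)|_B=b$ for some $\mj$ in a fixed finite subset of $\Z^{d}$. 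Reading $a$ from one slice of $z$ and $b$ from another produces two different elements of $X^{d}_\H$, and nothing forces any one slice of $z$ (or, a priori, any configuration of $X^{d}_\H$ at all) to contain both; ``projecting down'' is not an operation on configurations, only slicing is, so the last coordinate is not irrelevant. Indeed the whole content of the proposition is that the extra direction could conceivably obstruct gluing. The same gap occurs verbatim in your phased SI argument.

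Your route can be repaired, but only by invoking nontrivial results of the paper: by Parts (3) and (4) of Proposition \ref{prop:fromshifttowalk} (and Proposition \ref{prop:Phaed SI} for the SI case), once $X^{d+1}_\H$ is phased block-gluing/phased SI the gluing set may be taken to be $\{\m 0,\m e_1\}$, whose elements have last coordinate $0$; then both patterns land on the slice $\Z^{d}\square\{0\}$ and restricting $z$ to that slice does witness the gluing. The paper avoids this normalization by arguing through the walk graphs instead: if $X^{d_1}_\H$ is not phased block-gluing then $diam(\H^{d_1}_{walk})=\infty$ by Part (2) of Proposition \ref{prop:fromshifttowalk}; far-apart $x,y\in X^{d_1-1}_\H$ are extended (Proposition \ref{Proposition: periodic points in Hom shifts}) to $x^1,y^1\in X^{d_2-1}_\H$ agreeing with them on the slice $\Z^{d_1-1}\square\{\m 0\}$, and since adjacency in $\H^{d_2}_{walk}$ is coordinatewise, any walk from $x^1$ to $y^1$ restricts slice by slice to a walk from $x$ to $y$ in $\H^{d_1}_{walk}$, giving $d^w_\H(x^1,y^1)\geq d^w_\H(x,y)$ and hence $diam(\H^{d_2}_{walk})=\infty$. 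That restriction-of-walks mechanism (or the gluing-set normalization) is what your proposal is missing; your identification $\L_A(X^{d}_\H)=\L_{A\times\{0\}}(X^{d+1}_\H)$ is correct but does not by itself transfer the phased gluing.
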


Let us see this for the phased block-gluing property; the proof for the phased SI property uses similar ideas. Suppose $X^{d_1}_\H$ is not phased block-gluing. Fix $n \in \N$. By Proposition \ref{prop:fromshifttowalk} we know that $diam(\H^{d_1}_{walk})=\infty$. Thus there exists $x, y\in X^{d_1-1}_\H$ such that $d^w_\H(x,y)\geq n$. By Proposition \ref{Proposition: periodic points in Hom shifts} there exists $x^1,  y^1 \in X^{d_2-1}_\H$ such that $x^1_{(\mi, \m 0)}=x_\mi$ and $y^1_{(\mi, \m 0)}=y_\mi$ for all $\mi \in \Z^{d_1-1}$. Now given a walk (if it exists) $x^1, x^2, \ldots, x^k=y^1$ from $x^1$ to $y^1$ in $\H^{d_2}_{walk}$,
$$x^1|_{\Z^{d_1-1}\square \{\m 0\}}, x^2|_{\Z^{d_1-1}\square \{\m 0\}}, \ldots, x^k|_{\Z^{d_1-1}\square \{\m 0\}}$$
 is a walk in $\H^{d_1}_{walk}$ (up to identification of $\Z^{d_1-1}\square \{\m 0\}$ with $\Z^{d_1-1}$). Hence $d^w_\H(x^1, y^1)\geq n$. Since $n$ was arbitrary we have proven that $diam(\H^{d_2}_{walk})=\infty$ proving that $X^{d_2}_\H$ is not phased block-gluing.

We end this section with a few minor structural remarks. Let $C_n$ denote the $n$-cycle with vertices $\{0,1,2, \ldots, n-1\}$. The phased SI/phased block-gluing property for transitive hom-shifts is not stable under containment:  For instance we will prove that $X^{2}_{C_3}$ is not phased block-gluing in Theorem \ref{theorem:fcfreecharacterisation}. However $X^2_{Edge}$ and $X^2_{K_4}$ are both phased SI \cite{Raimundo2014} where $Edge$ is the induced subgraph on a pair of vertices in $C_3$ and $C_3$ is isomorphic to an induced subgraph of $K_4$. The mixing properties are however preserved under certain products:

The tensor product of graphs $\H_1=(\V_1, \E1_1)$ and $\H_2=(\V_2,\E1_2)$, denoted by $\H_1\times\H_2$ is the graph with vertex set $\V_1\times\V_2$ and $(v_1,v_2)\sim_{\H_1\times\H_2}(w_1,w_2)$ if $v_1\sim_{\H_1}w_1$ and $v_2\sim_{\H_2}w_2$.

\begin{prop}\label{proposition: graph products}
Let $\H_1$ and $\H_2$ be graphs such that $X^d_{\H_1}$ and $X^d_{\H_2}$ are phased SI/phased block-gluing. Let $\H$ be a connected component of $\H_1\times \H_2$. Then $X^d_{\H}$ is also phased SI/phased block-gluing.
\end{prop}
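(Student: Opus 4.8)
The plan is to work with the walk-graph characterization from Proposition \ref{prop:fromshifttowalk} (and its SI analogue), translating the statement about $X^d_\H$ into a statement about the diameter of $\H^d_{walk}$, and then relating $\H^d_{walk}$ to $(\H_1)^d_{walk}$ and $(\H_2)^d_{walk}$. The key observation is a compatibility between the tensor product and the walk-graph construction: if $\H$ is a connected component of $\H_1 \times \H_2$, then $X^{d-1}_\H$ embeds naturally into $X^{d-1}_{\H_1} \times X^{d-1}_{\H_2}$ (a configuration $x \in X^{d-1}_\H$ has coordinates $x_{\mi} = (x^1_{\mi}, x^2_{\mi})$ with $x^1 \in X^{d-1}_{\H_1}$ and $x^2 \in X^{d-1}_{\H_2}$, because adjacency in $\H_1 \times \H_2$ projects to adjacency in each factor), and that two configurations $x, y \in X^{d-1}_\H$ are adjacent in $\H^d_{walk}$ if and only if $x^1 \sim y^1$ in $(\H_1)^d_{walk}$ and $x^2 \sim y^2$ in $(\H_2)^d_{walk}$. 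So a walk of length $k$ from $x$ to $y$ in $\H^d_{walk}$ is precisely a pair of walks of common length $k$, one from $x^1$ to $y^1$ in $(\H_1)^d_{walk}$ and one from $x^2$ to $y^2$ in $(\H_2)^d_{walk}$, with the extra requirement that the pair stays inside the connected component corresponding to $\H$ at every step (which is automatic once the endpoints lie in that component, since any walk from a vertex of a component stays in the component).

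First I would set up this dictionary carefully: verify the projections $\H_1 \times \H_2 \to \H_i$ are graph homomorphisms, deduce the coordinatewise decomposition of configurations in $X^{d-1}_\H$, and check the adjacency equivalence in $\H^d_{walk}$. Next, using Proposition \ref{prop:fromshifttowalk} Part (\ref{prop:fromshifttowalk item 1}), the hypothesis that $X^d_{\H_i}$ is phased block-gluing together with Parts (\ref{prop:fromshifttowalk item 4}) and (\ref{prop:fromshifttowalk item 3}) gives finite diameter of each $(\H_i)^d_{walk}$; more precisely, after possibly passing to the appropriate sublattice in the bipartite case (Corollary \ref{corollary:bipartite2zdactionblock}), each $(\H_i)^d_{walk}$ — or each of its two bipartite halves — has the property that any two configurations are joined by a walk of length exactly $m_i$ for all sufficiently large $m_i$ (this is the primitivity/aperiodicity argument of Part (\ref{prop:fromshifttowalk item 3}) in the non-bipartite case, and the $\{0, \m e_i\}$-translate trick of Part (\ref{prop:fromshifttowalk item 4}) in the bipartite case, which provides walks of length $m$ or $m+1$). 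Given $x, y \in X^{d-1}_\H$ with decompositions $x = (x^1, x^2)$, $y = (y^1, y^2)$, I would find a common length $k$ (large enough, and of the right parity class in each factor, which can be arranged since the bipartite factors allow a $\pm 1$ adjustment and the non-bipartite factors allow all large lengths) and walks of that length in each factor connecting $x^i$ to $y^i$; pairing them coordinatewise yields a walk from $x$ to $y$ in $\H^d_{walk}$, and since the endpoints lie in the $\H$-component, the whole walk does. Hence $diam(\H^d_{walk}) < \infty$, so by Proposition \ref{prop:fromshifttowalk} Part (\ref{prop:fromshifttowalk item 2}), $X^d_\H$ is phased block-gluing. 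The phased SI case is identical except that instead of Proposition \ref{prop:fromshifttowalk} one uses Proposition \ref{prop:Phaed SI} and the boundary-annulus reformulation of SI given after it; the same coordinatewise pairing of fillings works because a homomorphism into $\H_1 \times \H_2$ landing in the $\H$-component is exactly a pair of compatible homomorphisms into $\H_1$ and $\H_2$.

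The main obstacle I expect is the parity bookkeeping when one or both of $\H_1, \H_2$ is bipartite. In that case $(\H_i)^d_{walk}$ is itself bipartite (or disconnected into two bipartite-action pieces), so walks between a fixed pair of configurations only exist for lengths in a fixed residue class mod $2$ (up to a universal constant), and to pair two such walks into a single walk in $\H^d_{walk}$ their lengths must agree on the nose. One has to check that the residue class forced in factor $1$ and the one forced in factor $2$ are compatible with some common $k$ — and here the point is that $\H$ being a connected component of $\H_1 \times \H_2$ constrains exactly which combinations of partite classes occur, so the ``diagonal'' parity is consistent; concretely, the $\{0, \m e_j\}$-translation allowed by the phased property in the bipartite factors lets one shift a walk's length by $1$ to force agreement. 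I would handle this by treating three cases (both factors non-bipartite; one bipartite; both bipartite) and in each case explicitly producing the common length $k$, citing Corollary \ref{corollary:biparite2zdactiontransitive} and Corollary \ref{corollary:bipartite2zdactionblock} for the structure of the bipartite walk-graphs. The rest — the homomorphism bookkeeping and the appeal to Proposition \ref{Proposition: periodic points in Hom shifts} to turn finite patterns back into configurations — is routine.
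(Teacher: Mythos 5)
Your proposal is correct and follows essentially the same route as the paper: both rest on the observation that a configuration valued in the component $\H$ of $\H_1\times\H_2$ is exactly a compatible pair of configurations in $X^d_{\H_1}$ and $X^d_{\H_2}$, so one glues (or walks) in each factor separately and re-pairs, with a three-case analysis on bipartiteness in which the component structure of the tensor product supplies the needed parity/translation consistency. The only difference is packaging: the paper glues patterns directly using the SI/block-gluing of the factors (writing out only the non-bipartite case and noting the others are similar), while you route the block-gluing half through the walk-graph diameter criterion of Proposition \ref{prop:fromshifttowalk}, an equivalent reformulation.
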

We understand the case of the cartesian product to a much lesser extent and might be of interest for future work.
\begin{proof} 

There are three separate cases to consider: neither $\H_1$ nor $\H_2$ is bipartite, exactly one of $\H_1$ and $\H_2$ are bipartite and finally both $\H_1$ and $\H_2$ are bipartite. The proofs for the three cases are similar given the following well-known observations: If $\H_1$ and $\H_2$ are connected graphs which are not bipartite then $\H_1\times \H_2$ is connected and bipartite. If exactly one of $\H_1$ and $\H_2$ is bipartite and both are connected then $\H_1\times\H_2$ is also bipartite and connected. If both $\H_1$ and $\H_2$ are bipartite and connected then $\H_1\times\H_2$ has two graph components, both are connected bipartite graphs.

Since these three cases are very similar we shall only prove the theorem for the case where both $\H_1$ and $\H_2$ are not bipartite. Let $X^d_{\H_1}$ and $X^d_{\H_2}$ be phased SI (and hence SI given Proposition \ref{prop:Phaed SI}). Let $(x^1, y^1) ,(x^2,y^2)\in X^d_{\H_1\times\H_2}$. Let $n$ be the maximum of the SI gluing distances for $X^d_{\H_1}$ and $X^d_{\H_2}$. Let $A, B\subset \Z^d$ such that they are separated by distance $n$. Then there exists $(x, y)\in X^d_{\H_1\times \H_2}$ such that $x|_{A}=x^1|_A$, $x|_B=x^2|_B$, $y|_A=y^1|_A$ and $y|_B=y^2|_B$. The proof for the block-gluing property follows the same idea; we need to restrict to rectangular shapes $A$ and $B$.
\end{proof}

Finally we observe that the lack of the block-gluing property is equivalent to the graph $\H^d_{walk}$ being disconnected:

\begin{prop}\label{prop: disconnection of graphs}
Let $\H$ be a finite undirected graph. Then $diam(\H^d_{walk})=\infty$ if and only if $\H^d_{walk}$ is disconnected.
\end{prop}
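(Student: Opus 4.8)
The plan is to prove the contrapositive of the nontrivial direction: if $\H^d_{walk}$ is connected then it has finite diameter. (The reverse implication is immediate, since a disconnected graph has a pair of vertices at distance $\infty$, hence infinite diameter.) So suppose $\H^d_{walk}$ is connected. I would first reduce to a compactness statement about the approximating graphs $\H^d_{n,walk}$. Recall the remark in the excerpt that $diam(\H^d_{walk})=\infty$ if and only if $\lim_{n\to\infty} diam(\H^d_{n,walk})=\infty$; so it suffices to produce a uniform bound on $diam(\H^d_{n,walk})$ over all $n$, or equivalently to rule out the possibility that these diameters blow up while the limiting graph stays connected.

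The key observation is that each $\H^d_{n,walk}$ has a bounded number of vertices in the following restricted sense: although $|Hom(B^{d-1}_n,\H)|$ grows with $n$, what matters for the distance between two configurations $x,y\in X^{d-1}_\H$ is captured by finitely much data. The main step I would carry out is: take $x,y\in X^{d-1}_\H$ with $d^w_\H(x,y)$ finite (which holds for all pairs by the connectedness hypothesis, once one checks connectedness of $\H^d_{walk}$ forces $d^w_\H(x,y)<\infty$ for every pair — this is exactly what "connected" means here). Now suppose for contradiction that $diam(\H^d_{walk})=\infty$. Then there are pairs $x^{(k)},y^{(k)}$ with $d^w_\H(x^{(k)},y^{(k)})\to\infty$. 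By compactness of $X^{d-1}_\H$, pass to a subsequence so that $x^{(k)}\to x^{\infty}$ and $y^{(k)}\to y^{\infty}$ in $X^{d-1}_\H$. Since $\H^d_{walk}$ is connected, $d^w_\H(x^{\infty},y^{\infty})=:N<\infty$, witnessed by a homomorphism $c\colon \Z^{d-1}\square[0,N]\to\H$ with $c|_{\cdot,0}=x^{\infty}$, $c|_{\cdot,N}=y^{\infty}$. The point is that this walk only "sees" $c$ on a finite window $B^{d-1}_n\square[0,N]$; for $k$ large enough, $x^{(k)}$ and $y^{(k)}$ agree with $x^\infty,y^\infty$ on $B^{d-1}_n$ (for any fixed $n$). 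The obstacle is that agreeing on a finite window is not enough to conclude a short walk exists between $x^{(k)}$ and $y^{(k)}$ globally — the walk $c$ might not extend to a homomorphism on all of $\Z^{d-1}\square[0,N]$ matching $x^{(k)}$ and $y^{(k)}$ outside the window.

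To overcome this, I would instead argue directly with the graphs $\H^d_{n,walk}$ and a diagonal/König's-lemma argument. Fix a putative sequence of pairs in $\H^d_{n_k,walk}$ realizing $diam(\H^d_{n_k,walk})\to\infty$; by compactness extract limits $x^\infty, y^\infty \in X^{d-1}_\H$ as above. Connectedness of $\H^d_{walk}$ gives a finite walk of some length $N$ from $x^\infty$ to $y^\infty$. This walk, restricted to the box $B^{d-1}_{n}$, is a walk of length $N$ in $\H^d_{n,walk}$ from $x^\infty|_{B^{d-1}_n}$ to $y^\infty|_{B^{d-1}_n}$. The issue of "boundary mismatch" disappears at the level of $\H^d_{n,walk}$ because there is no boundary — configurations there are just homomorphisms on the finite box $B^{d-1}_n$, and two such boxes are connected by a walk iff they cobound a homomorphism on $B^{d-1}_n\square[0,N]$, which a suitable restriction of $c$ provides. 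So for each fixed $n$, every pair in $\H^d_{n,walk}$ that arises as a restriction of a pair from $X^{d-1}_\H$ lies at bounded distance — but every vertex of $\H^d_{n,walk}$ is by definition a homomorphism on $B^{d-1}_n$, which by Proposition \ref{Proposition: periodic points in Hom shifts} extends to an element of $X^{d-1}_\H$. Hence: for each fixed $n$, $diam(\H^d_{n,walk})$ is bounded by the supremum of $d^w_\H$ over $X^{d-1}_\H\times X^{d-1}_\H$ restricted to the box. The remaining (and I expect main) difficulty is making the quantifiers uniform in $n$: one needs that the length $N$ of the connecting walk can be chosen uniformly once the restrictions to $B^{d-1}_n$ are fixed, which again follows by a compactness argument on $X^{d-1}_\H$ applied once — since $X^{d-1}_\H$ is compact and the "distance $\le N$" relation is closed, the function $(x,y)\mapsto d^w_\H(x,y)$ on the compact space $X^{d-1}_\H\times X^{d-1}_\H$, if everywhere finite (by connectedness), is bounded. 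I would then note $diam(\H^d_{walk}) = \sup_{x,y} d^w_\H(x,y)$ is exactly this bounded quantity, so it is finite. Contrapositively, if $diam(\H^d_{walk})=\infty$ then $\H^d_{walk}$ must be disconnected, completing the proof.

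The one delicate point worth flagging: the claim "$d^w_\H$ finite everywhere on a compact space implies $d^w_\H$ bounded" requires that for each $k$, the set $\{(x,y): d^w_\H(x,y)\le k\}$ is closed in $X^{d-1}_\H\times X^{d-1}_\H$; this is true because "there is a homomorphism $\Z^{d-1}\square[0,k]\to\H$ interpolating $x$ and $y$" is a closed condition by the usual compactness/finite-window argument for SFTs. Granting this, $X^{d-1}_\H\times X^{d-1}_\H = \bigcup_k\{d^w_\H\le k\}$ is a countable union of closed sets covering a compact space, and since the sets are nested, one of them is everything — giving the uniform bound. That is the crux, and it is exactly the same mechanism underlying the "left to the reader" remark equating $diam(\H^d_{walk})=\infty$ with $\lim_n diam(\H^d_{n,walk})=\infty$.
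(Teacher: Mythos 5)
Your reduction of the nontrivial direction to the claim ``if $\H^d_{walk}$ is connected then $d^w_\H$ is bounded on $X^{d-1}_\H\times X^{d-1}_\H$'' is fine, and your observation that each sublevel set $F_k=\{(x,y): d^w_\H(x,y)\le k\}$ is closed is correct: it is a finite union over $j\le k$ of continuous images of the compact spaces $Hom(\Z^{d-1}\square[0,j],\H)$. The gap is the step you yourself flag as the crux. It is false that a compact space covered by a nested countable family of closed sets must equal one of them. Closedness of sublevel sets only says $d^w_\H$ is lower semicontinuous, and a lower semicontinuous, everywhere finite, integer-valued function on a compact space can be unbounded: on the compact set $\{0\}\cup\{1/n : n\in\N\}$ put $f(0)=0$ and $f(1/n)=n$; every sublevel set is finite, hence closed, the sets $\{f\le k\}$ are nested and cover the space, yet none of them is the whole space. (Baire's theorem only yields some $F_k$ with nonempty interior, which is far from a uniform bound, and it is not clear how to upgrade a cylinder-set's worth of bounded pairs to all pairs using topology alone.) So ``connected $\Rightarrow$ finite diameter'' cannot be extracted from compactness of $X^{d-1}_\H$ by itself; some dynamical input is needed, and your proposal never supplies it.

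The missing input is mixing, and that is exactly how the paper argues (the same direction as yours, but constructively rather than by a soft compactness principle). Assuming $diam(\H^d_{walk})=\infty$ and the graph connected, take pairs $x^n,y^n$ with $d^w_\H(x^n,y^n)\ge n$, cut out finite windows $a^n,b^n\in\H^d_{k_n,walk}$ whose distance in $\H^d_{k_n,walk}$ is already at least $n$, and use mixing of the hom-shift (Proposition \ref{prop: transitivity hom-shifts}, for $\H$ connected and not bipartite; the bipartite case needs the obvious modification) to build a single configuration $x$ containing all the $a^n$ and a single $y$ containing all the $b^n$ at the same translates $\mi_n$. Any walk of length $m$ from $x$ to $y$ in $\H^d_{walk}$ restricts on the window $\mi_n+B^{d-1}_{k_n}$ to a walk of length $m$ from $a^n$ to $b^n$, which is impossible once $n>m$; hence $d^w_\H(x,y)=\infty$ and $\H^d_{walk}$ is disconnected. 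To repair your write-up you need this splicing step (or some other mechanism converting ``pairs at arbitrarily large finite distance'' into ``one pair at infinite distance''); the purely topological argument you propose does not close the gap.
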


\begin{proof} We will prove the proposition in the case when $\H$ is not bipartite; the proof for the bipartite case is similar and left to the reader. Let $diam(\H^{d}_{walk})=\infty$. Then either $\H^{d}_{walk}$ is disconnected or for all $n\in \N$ there exist configurations $x^n, y^n\in X^{d-1}_\H$ such that $d^w_{\H}(x^n, y^n)\geq n$. By choosing a large enough subpattern from these configurations it follows that there exists $k_n\in \N$ and $a^n, b^n\in \H^d_{k_n, walk}$ such that the shortest walk from $a^n$ to $b^n$ is of length greater than or equal to $n$. Since $\H$ is not bipartite, by Proposition \ref{prop: transitivity hom-shifts}, the hom-shift $X^d_{\H}$ is mixing. Thus there exist $x,y\in X^{d-1}_{\H}$ such that there exists $\mi_n\in \Z^{d-1}$ satisfying 
$$\sigma^{\mi_n}(x)|_{B^{d-1}_{k_n}}=a^n,\sigma^{\mi_n}(y)|_{B^{d-1}_{k_n}}=b^n\text{ for all }n\in\N.$$
It follows that $d^w_{\H}(x,y)=\infty$ implying that $\H^d_{walk}$ is disconnected. 

For the other direction, if $\H^d_{walk}$ is disconnected then its diameter is infinite; this follows from the definition of the diameter.
\end{proof}

\section{Phased Mixing Properties for Four-Cycle Hom-Free Graphs}\label{section:Phased Mixing Properties for $C_4$-Hom-Free Graphs}

We say that an undirected graph $\H$ is a \emph{four-cycle hom-free graph} if for all graph homomorphisms $f:C_4\longrightarrow \H$ either $f(0)=f(2)$ or $f(1)=f(3)$. Let us begin by unravelling the definition.

\begin{prop}\label{prop: when H fchom-free}
An undirected graph $\H$ is four-cycle hom-free if and only if $C_4$ is not a subgraph of $\H$ and if $v\in \H$ has a self-loop then $w_1, w_2\sim_\H v$ and $w_1, w_2\neq v$ implies $w_1\not\sim_\H w_2$.
\end{prop}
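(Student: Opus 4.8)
The plan is to argue both implications directly from the definition, using the elementary fact that a graph homomorphism $f:C_4\longrightarrow\H$ is the same thing as a closed walk $f(0)\sim_\H f(1)\sim_\H f(2)\sim_\H f(3)\sim_\H f(0)$ of length $4$ in $\H$. Thus $\H$ fails to be four-cycle hom-free precisely when there is such a closed walk with $f(0)\neq f(2)$ \emph{and} $f(1)\neq f(3)$, and the strategy in each direction is to exhibit or rule out such a walk.

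For the forward direction, assume $\H$ is four-cycle hom-free. If $C_4$ were a subgraph of $\H$, the four distinct cycle vertices $v_0\sim_\H v_1\sim_\H v_2\sim_\H v_3\sim_\H v_0$ would give a homomorphism $i\mapsto v_i$ with $v_0\neq v_2$ and $v_1\neq v_3$, a contradiction; hence $C_4$ is not a subgraph. Next suppose $v$ has a self-loop and $w_1,w_2\sim_\H v$ with $w_1,w_2\neq v$, and suppose toward a contradiction that $w_1\sim_\H w_2$. Then $f=(v,w_1,w_2,v)$ is a homomorphism $C_4\to\H$ (its four required edges are $v\sim_\H w_1$, $w_1\sim_\H w_2$, $w_2\sim_\H v$, and the self-loop $v\sim_\H v$), yet $f(0)=v\neq w_2=f(2)$ and $f(1)=w_1\neq v=f(3)$, contradicting four-cycle hom-freeness. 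Hence $w_1\not\sim_\H w_2$. (Note that taking $w_1=w_2$ here already forces that no neighbour, other than itself, of a self-loop vertex can itself carry a self-loop, which is the case I expect to need below.)

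For the converse, assume $C_4$ is not a subgraph and the self-loop condition holds; let $f:C_4\to\H$ be any homomorphism and write $a=f(0)$, $b=f(1)$, $c=f(2)$, $d=f(3)$, so $a\sim_\H b\sim_\H c\sim_\H d\sim_\H a$. Assume toward a contradiction that $a\neq c$ and $b\neq d$. If $a,b,c,d$ are pairwise distinct then these four edges form a $C_4$ subgraph, a contradiction; otherwise, since $a\neq c$ and $b\neq d$, one of the cyclically adjacent pairs must coincide, i.e. $a=b$, $b=c$, $c=d$ or $d=a$. Precomposing $f$ with a rotation of $C_4$ (an automorphism which interchanges the two diagonals $\{0,2\}$ and $\{1,3\}$, hence preserves the assumptions $a\neq c$, $b\neq d$), I may assume $a=b$. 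Then $a$ has a self-loop ($a\sim_\H b=a$), and from $b\neq d$ we get $a\neq d$, while $c,d\sim_\H a$ and $c\neq a$. If $c\neq d$, the self-loop hypothesis applied with $v=a$, $w_1=c$, $w_2=d$ gives $c\not\sim_\H d$, contradicting $c\sim_\H d$; if $c=d$, the same hypothesis with $w_1=w_2=c$ gives $c\not\sim_\H c$, contradicting the self-loop $c\sim_\H d=c$. Either way we reach a contradiction, so $a=c$ or $b=d$, proving $\H$ is four-cycle hom-free.

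The only point requiring care is the reduction ``some cyclically adjacent pair of images coincides $\Rightarrow$ WLOG $a=b$'': one should check that cyclic rotation is a graph automorphism of $C_4$, that it permutes the diagonal pairs so the hypotheses $a\neq c$ and $b\neq d$ are rotation-invariant, and that ``not all four distinct, together with $a\neq c$ and $b\neq d$'' genuinely forces one of $a=b,b=c,c=d,d=a$. Everything else is a short finite verification, and I do not anticipate a real obstacle; anyone wishing to avoid the symmetry step can simply run the identical short argument in each of the four cases separately at the cost of mild repetition.
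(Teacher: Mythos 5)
Your proof is correct and follows essentially the same approach as the paper: the forward direction uses the same homomorphism through the looped vertex, and your converse is simply the case analysis the paper leaves to the reader as ``similar in nature.'' You also rightly observe that the self-loop condition must be read as allowing $w_1=w_2$ (so a non-loop neighbour of a looped vertex cannot itself be looped), which is exactly what the $c=d$ subcase of your converse needs.
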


\begin{proof} Let us see the forward direction; the arguments for the backward direction are similar in nature and left to the reader. Suppose $\H$ is four-cycle hom-free. Since there exists no graph homomorphism $f\in Hom(C_4, \H)$ which is an embedding, the graph $C_4$ is not a subgraph of $\H$. Now suppose the vertex $v\in \H$ has a self-loop, $w_1, w_2\sim_\H v$ and $w_1, w_2\neq v$. Consider the map $f': C_4\longrightarrow \H$ given by $f'(0)=f'(1):=v$, $f'(2):=w_1, f'(3):=w_2$; it is a graph homomorphism if and only if $w_1\sim_\H w_2$. But for the map $f'$, $f'(0)\neq f'(2)$ and $f'(1)\neq f'(3)$. Thus by the four-cycle hom-free property of $\H$ it follows that $f'$ is not a graph homomorphism from where it follows that $w_1\not\sim_\H w_2$.
\end{proof}

It follows from Proposition \ref{prop: when H fchom-free} that a graph $\H$ without self-loops is four-cycle hom-free if and only if it is a \emph{four-cycle free graph} in the sense of \cite{MR3645513}, that is, $C_4$ is not a subgraph of $\H$. It was observed in \cite{MR3645513} that a homomorphism from $\Z^d$ to $\H$ can be lifted to the universal cover $\H_{uni}$ (defined below). This includes graphs $\H$ which are trees and cycles $C_n$ for $n\neq 4$. A particular case is that of $n=3$; $X^d_{C_3}$ is the space of proper $3$-colourings of $\Z^d$.

This condition was studied in \cite{Marcinfourcyclefree2014} in the context of reconfiguration problems; we remark that the so-called fundamental groupoid in that paper is intimately related to the universal cover of $\H$. If $\H=C_3$ then the lifts correspond to the so called height functions (\cite{lieb}).

In addition it follows from Proposition \ref{prop: when H fchom-free} that the graph for the hard square shift (Figure \ref{figure:hardsquaregraph}) satisfies the hypothesis. For trees with loops, we refer to \cite{Raimundopavlov2016} (Proposition 8.1 and its corollaries) for related results.

In this section we describe a procedure for deciding the mixing conditions of $X^d_{\H}$ for a four-cycle-hom-free graph. For this we require a notion of folding in graphs: We say that a vertex $v$ \emph{folds} into $w$ if $N_{\H}(v)\subset N_\H(w)$. In this case $\H\setminus\{v\}$ is called a \emph{fold} of the graph $\H$. A graph is called \emph{stiff} if it does not have any non-trivial folds. Starting with a finite graph $\H$ we can obtain a stiff graph by a sequence of folds; stiff graphs thus obtained are the same up to graph isomorphism \cite[Theorem 4.4]{brightwell2000gibbs}. A graph $\H$ is called \emph{dismantlable} if there exists a sequence of graphs $\H=\H_1, \H_2, \ldots, \H_n$ such that $\H_{i+1}$ is a fold of the graph $\H_i$  for every $i$ and $\H_n$ is a vertex with or without self-loop. If $\H$ is a connected dismantlable graph which is not an isolated vertex then it follows that the stiff graph obtained by successive folds of $\H$ is a vertex with a self-loop. A graph $\H$ is called \emph{bipartite-dismantlable} if there exists a sequence of graphs $\H=\H_1, \H_2, \ldots, \H_n$ such that $\H_{i+1}$ is a fold of the graph $\H_i$  for every $i$ and $\H_n$ is either a single edge or a single vertex with a self-loop. Graph folding was introduced in \cite{Nowwinkler} to study cop-win graphs; later in \cite{brightwell2000gibbs} it was observed that folding preserves a lot of properties of the graphs. Since a fold of a graph $\H$ is bipartite if and only if $\H$ is bipartite it follows that if a graph $\H$ is bipartite-dismantlable, then it is dismantlable if and only if $\H$ is not bipartite.

The following proposition essentially follows from arguments similar to those in the proof of Theorem 4.1 in \cite{brightwell2000gibbs} and we omit them here:

\begin{prop}\label{Proposition: phased SI for dismantlable graphs}
Let $\H$ be a bipartite-dismantlable graph. Then $X^d_\H$ is phased SI. If $\H$ is bipartite-dismantlable and $X^d_\H$ is SI then $\H$ is dismantlable.
\end{prop}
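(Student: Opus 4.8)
The plan is to prove the two assertions separately, in both cases following the folding technology of \cite{brightwell2000gibbs} and adapting it to the phased setting. For the first assertion, I would argue by induction on the length of a bipartite-dismantling sequence $\H=\H_1, \H_2, \ldots, \H_n$. The base case is $\H_n$ equal to a single edge or a single vertex with a self-loop; in either situation $X^d_{\H_n}$ is trivially phased SI (for the single edge, $X^{d}_{\H_n}$ consists of the two checkerboard configurations, and the single self-loop gives a fixed point, so the gluing set $\{\m 0, \m e_1\}$ and any gluing distance work). For the inductive step, suppose $\H_{i+1}$ is a fold of $\H_i$, obtained by folding a vertex $v$ into a vertex $w$ with $N_{\H_i}(v)\subseteq N_{\H_i}(w)$, and suppose $X^d_{\H_{i+1}}$ is phased SI. The key point is that folding induces a retraction-type relationship between $X^d_{\H_i}$ and $X^d_{\H_{i+1}}$: the inclusion $\H_{i+1}\hookrightarrow \H_i$ gives $X^d_{\H_{i+1}}\subseteq X^d_{\H_i}$, and the fold map $\phi:\H_i\to\H_{i+1}$ sending $v\mapsto w$ and fixing everything else is a graph homomorphism, hence induces $\Phi: X^d_{\H_i}\to X^d_{\H_{i+1}}$ by postcomposition. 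I would then, given two globally allowed patterns $\langle a\rangle_A,\langle b\rangle_B$ in $X^d_{\H_i}$ with $d_\infty(A,B)$ large, push them forward by $\Phi$ to patterns in $X^d_{\H_{i+1}}$, glue them there using phased SI of $X^d_{\H_{i+1}}$ to get $z\in X^d_{\H_{i+1}}$ with $z|_A=\Phi(a)$ and $\sigma^{\m i}(z)|_B=\Phi(b)$ for some $\m i\in S$, and then correct $z$ back: on $A$ and $\sigma^{-\m i}(B)$ the configuration $z$ already agrees with the $\phi$-image of the original patterns, so I need to modify $z$ so that it equals $a$ on $A$ and $\sigma^{\m i}(\cdot)$ equals $b$ on $B$ while staying in $X^d_{\H_i}$. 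This local correction is exactly where the condition $N_{\H_i}(v)\subseteq N_{\H_i}(w)$ is used: any occurrence of $w$ adjacent to the image of $a$ or $b$ can be replaced by $v$ without violating adjacency, so one surgically restores the original values on $A$ and $B$. This is the step I expect to be the main obstacle — making the surgery precise while controlling that the gluing set and gluing distance stay uniform along the induction — though it is essentially the content of the proof of Theorem 4.1 in \cite{brightwell2000gibbs}, adapted to carry the phase $S$ along unchanged.

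For the second assertion, suppose $\H$ is bipartite-dismantlable and $X^d_\H$ is SI; I must show $\H$ is dismantlable. Recall (from the discussion just before the statement) that since a fold of $\H$ is bipartite iff $\H$ is, a bipartite-dismantlable graph is dismantlable if and only if it is not bipartite. So it suffices to show that $\H$ is not bipartite. But if $\H$ were bipartite and connected, then $X^d_\H$ is not even transitive — it splits into the two disjoint nonempty pieces $X_1, X_2$ indexed by which partite class the origin maps into (as noted in Corollary \ref{corollary:biparite2zdactiontransitive} and the surrounding discussion) — hence certainly not SI. If $\H$ is bipartite but disconnected, the same obstruction applies a fortiori. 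Therefore $X^d_\H$ SI forces $\H$ non-bipartite, and combined with bipartite-dismantlability this yields dismantlability.

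One subtlety to address in the writeup: the phrase ``bipartite-dismantlable'' allows the final graph $\H_n$ to be a single edge (the bipartite base case) or a single vertex with self-loop (the non-bipartite base case), so the induction for the first assertion must handle both base cases, and in the disconnected situation one should note that the argument applies component-by-component, with the gluing set and distance taken to be the common refinement over the finitely many components. With these points in place the proposition follows. Since, as the paper states, the argument is parallel to \cite[Theorem 4.1]{brightwell2000gibbs}, the exposition can legitimately be kept brief, emphasizing only the modifications needed to track the phase.
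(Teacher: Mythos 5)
Your overall architecture --- induction along a bipartite-dismantling sequence with the two base cases, plus reducing the second assertion to non-bipartiteness --- is the right shape, and it is the folding argument the paper itself invokes (the paper omits the details, citing Theorem 4.1 of Brightwell and Winkler). However, the step you flag as the main obstacle is genuinely wrong as written, and it is the heart of the matter. The fold condition $N_{\H_i}(v)\subseteq N_{\H_i}(w)$ licenses replacing $v$ by $w$, not $w$ by $v$; your surgery requires the latter, namely restoring $v$ at the sites of $A$ where $a$ took the value $v$, and the neighbours of such a site in the glued configuration $z$ are only guaranteed to carry values in $N_{\H_i}(w)$, not in $N_{\H_i}(v)$. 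Concretely, take $\H_i$ with vertices $v,w,b,u$ and edges $v\sim b$, $w\sim b$, $w\sim u$, and fold $v$ into $w$: a site of $A$ that must be restored to $v$ can be adjacent, in $z$, to a site carrying $u\not\sim_{\H_i}v$, so the restored configuration is not a homomorphism. The missing idea is a spatial interpolation between a configuration and its fold image. One clean fix within your scheme: extend $a,b$ to configurations $\t x,\t y\in X^d_{\H_i}$, glue the pushforwards $\phi\circ\t x$ and $\phi\circ\t y$ on the enlarged sets $A\cup\partial_1 A$ and $B\cup\partial_1 B$ (not on $A$ and $B$ themselves), and only then overwrite $A$ by $a$ and, after the shift, $B$ by $b$. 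This works because of the correctly oriented adjacency fact $\t x_{\m j}\sim_{\H_i}(\phi\circ\t x)_{\m j'}$ for all adjacent $\m j,\m j'$: every neighbour of a restored site lies in the collar and carries a value of $\phi\circ\t x$, and $N_{\H_i}(v)\subseteq N_{\H_i}(w)$ is used exactly there. (Equivalently one can run the Brightwell--Winkler argument directly: fold shell by shell around $A$ and around $B$ until both configurations become the terminal graph's checkerboard or constant configuration and patch, the unit shift $\m e_i$ absorbing the parity mismatch in the bipartite case.) Either way the gluing distance grows by a bounded amount per fold, which is harmless since there are finitely many folds --- but this collar/shell mechanism and the correct direction of the adjacency argument are precisely what your writeup lacks.

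Two smaller points. In the second assertion, your claim that a connected bipartite $\H$ makes $X^d_\H$ non-transitive contradicts Proposition \ref{prop: transitivity hom-shifts}: transitivity is equivalent to connectivity, and the sets $X_1,X_2$ are not shift-invariant ($\sigma^{\m e_1}$ interchanges them), so they do not obstruct transitivity. The correct obstruction is mixing: SI implies mixing, and a bipartite $\H$ is never mixing (or argue directly that for odd $k$ the single-site patterns $u$ at $\m 0$ and $u$ at $k\m e_1$ can never coexist), so SI forces $\H$ to be non-bipartite, and the reduction you quote then gives dismantlability. Finally, the component-by-component caveat is vacuous: a fold never merges components, so a disconnected graph is never bipartite-dismantlable.
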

We can now state the main result of this section.
\begin{thm}\label{theorem:fcfreecharacterisation}
Let $\H$ be a four-cycle hom-free graph. The following are equivalent:
\begin{enumerate}[(a)]
\item
 $X^d_\H$ is phased SI.\label{enum:phased SI}
\item
$X^d_\H$ is phased block-gluing.\label{enum:phased block-gluing}
\item
$\H$ is bipartite-dismantlable. \label{enum: bipartite dismantlable}
\end{enumerate}
\end{thm}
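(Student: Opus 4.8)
The plan is to establish the cycle of implications $(\ref{enum: bipartite dismantlable})\Rightarrow(\ref{enum:phased SI})\Rightarrow(\ref{enum:phased block-gluing})\Rightarrow(\ref{enum: bipartite dismantlable})$. The first implication is already in hand: Proposition \ref{Proposition: phased SI for dismantlable graphs} says that a bipartite-dismantlable graph $\H$ gives a phased SI hom-shift $X^d_\H$. The second implication $(\ref{enum:phased SI})\Rightarrow(\ref{enum:phased block-gluing})$ is immediate, since SI-type gluing on all shapes is formally stronger than block-gluing on rectangular shapes (the same witness configuration and gluing set work). So the entire content is the contrapositive of $(\ref{enum:phased block-gluing})\Rightarrow(\ref{enum: bipartite dismantlable})$: if $\H$ is four-cycle hom-free but \emph{not} bipartite-dismantlable, then $X^d_\H$ is not phased block-gluing, i.e. by Proposition \ref{prop:fromshifttowalk}(\ref{prop:fromshifttowalk item 2}) we must show $diam(\H^d_{walk})=\infty$. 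By the monotonicity result, Proposition \ref{proposition: mononotinicity of dimensiosn}, it suffices to treat $d=2$; so the goal reduces to exhibiting, for every $n$, a pair of bi-infinite walks $x,y\in X^1_\H$ with $d^w_\H(x,y)\ge n$.

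The key tool, as the introduction signals, is the universal cover. Since $\H$ is four-cycle hom-free, by Proposition \ref{prop: when H fchom-free} it is four-cycle free away from self-loops, and homomorphisms $\Z^d\to\H$ lift to the universal cover $\H_{uni}$ (a tree, possibly with a self-loop at a single vertex in the bipartite-with-loop case; following \cite{chandgotia2015entropyminimal,Marcinfourcyclefree2014}). The strategy is: first reduce $\H$ to its stiff form $\H'$ by folding. Folding does not change the phased block-gluing status (one should check this, using that a fold $\H\setminus\{v\}\hookrightarrow\H$ and the retraction $\H\to\H\setminus\{v\}$ both induce maps on walk-graphs that are $1$-Lipschitz-ish on distances, so finite diameter is preserved in both directions — this is the folding-preserves-mixing philosophy of \cite{brightwell2000gibbs}), and it does not change bipartite-dismantlability. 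Since $\H$ is not bipartite-dismantlable, its stiff form $\H'$ is neither a single edge nor a single vertex with a self-loop; being stiff and four-cycle hom-free, $\H'$ must contain a cycle $C_m$ with $m\ne 4$ (a stiff four-cycle-free graph with no cycles would be a stiff tree, hence a single edge; and $m\ne 4$ by the four-cycle-free hypothesis) — more precisely $\H'$ contains an "essential" cycle that survives to the universal cover. Then in $\H'_{uni}$ one builds two walks that wind around this cycle "out of phase": concretely, lift a long periodic walk going around $C_m$ repeatedly, and a second walk obtained by shifting the winding; any path between them in $\H'^2_{walk}$ corresponds to a homomorphism $\Z\square[0,k]\to\H'$ whose lift must "unwind" the height/winding discrepancy one unit at a time, forcing $k$ to grow with the discrepancy. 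This is where one uses that $\H'$ is stiff: there is no folding move available to shortcut the unwinding, so the winding number genuinely is a Lipschitz obstruction to short walks.

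I expect the main obstacle to be making the "winding number obstruction" fully rigorous: defining the right integer-valued invariant on elements of $X^1_{\H'}$ (or on finite walks appearing in them), showing it changes by at most a bounded amount under a single step in $\H'^2_{walk}$, and showing it can be made arbitrarily large on genuine configurations of $X^1_{\H'}$ — the last point requires Proposition \ref{Proposition: periodic points in Hom shifts} to guarantee the periodic winding walks extend to actual points of the hom-shift, and requires care that the invariant is well-defined modulo the residual self-loop in the bipartite-with-loop subcase. A clean way to package this: pass to $\H'_{uni}$, where a "walk" is a walk in a tree(-with-one-loop), use the tree distance / the retraction onto the lifted cycle, and note that the projection of a bi-infinite walk to $\H'$ that winds $N$ times around $C_m$ cannot be connected in fewer than $\sim N$ steps to one that winds $0$ times, because each step in the walk-graph changes the winding by at most $1$ (a homotopy across a single square of $\Z\square[0,k]$ can alter the winding by at most one, by the four-cycle-free condition which prevents a single square from "jumping over" the cycle). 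One should also handle separately the two degenerate cases excluded from "bipartite-dismantlable": $\H'$ a stiff graph that is a single vertex with a self-loop is dismantlable (excluded), and a single edge is bipartite-dismantlable (excluded), so every remaining stiff $\H'$ indeed carries such an essential non-four-cycle, completing the argument.
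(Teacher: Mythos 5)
Your reduction to the single implication (\ref{enum:phased block-gluing})$\Rightarrow$(\ref{enum: bipartite dismantlable}) is right (the other two implications are handled exactly as in the paper), and you correctly identify the universal cover as the relevant tool, but the execution you sketch has a genuine gap. First, the structural dichotomy you rely on is false: you claim that the stiff form $\H'$ of a non-bipartite-dismantlable, four-cycle hom-free graph must contain an essential cycle $C_m$, $m\neq 4$, "since a stiff four-cycle-free graph with no cycles would be a stiff tree, hence a single edge". This ignores self-loops, which the four-cycle hom-free class explicitly allows. The barbell graphs $Bar_n$ ($n>2$) are stiff, four-cycle hom-free, contain no cycle subgraph $C_m$ at all, and are not bipartite-dismantlable; they are exactly the kind of graph the theorem must handle (the paper notes $(Bar_n)_{uni}=\Z$ and that $X^d_{Bar_n}$ fails block-gluing). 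Here the obstruction is two self-loops joined by a path, not a cycle, so your winding-around-$C_m$ invariant has nothing to wind around; your parenthetical remarks about a "residual self-loop" do not repair this, because this case is not degenerate or exceptional. Second, the step you yourself flag as the main obstacle — that one step in $\H^2_{walk}$ changes the winding/height by at most one, "because a square cannot jump over the cycle" — is precisely the nontrivial content and is never established; stiffness, which you invoke to rule out "shortcuts", is not the mechanism that makes it work.

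For comparison, the paper does not pass through the stiff form or any winding invariant. It proves a unique-lifting statement (Proposition \ref{prop:four-cycle hom-free graph lift to universal covers}): for four-cycle hom-free $\H$, every $x\in X^{d-1}_\H$ lifts to $X^{d-1}_{\H_{uni}}$ uniquely up to the choice of $x'_{\m 0}$, and $\t\phi_{uni}$ is a covering map of the walk graphs. From this, Corollary \ref{corollary: universal cover infinite no block-gluing} gives $diam(\H^d_{walk})<\infty$ if and only if $\H_{uni}$ is finite: when $\H_{uni}$ is infinite one takes $x'$ whose restriction to $\N$ never revisits a vertex and compares its projection with a checkerboard $(v,w)^{\infty,1}$; any connecting walk would lift, but in the tree $\H_{uni}$ the lifted configuration escapes to infinity while the lifted checkerboard sits on a single edge, so the walk distance is infinite (this replaces, and makes rigorous, your per-step Lipschitz claim — and it covers the barbell case automatically). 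Finally, when $\H_{uni}$ is a finite tree the paper shows bipartite-dismantlability of $\H$ directly by descending folds: a fold $v'\mapsto w'$ in $\H_{uni}$, together with the normality of the cover (Proposition \ref{prop:universality of universal covers}), forces $\phi_{uni}(v')$ to fold into $\phi_{uni}(w')$ in $\H$, and folding all preimages of $\phi_{uni}(v')$ produces $(\H\setminus\{\phi_{uni}(v')\})_{uni}$, so one concludes by induction on $|\H|$. If you want to salvage your contrapositive route, you would need to (i) replace "essential cycle $C_m$" by "infinite universal cover" as the dichotomy (your folding-invariance observations can be used to reduce to the stiff case, but they are not needed), and (ii) prove the lifting/covering statement, at which point you are essentially reconstructing the paper's argument.
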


 The four-cycle hom-free condition is necessary for these equivalences; we will discuss this further after the proof of Theorem \ref{theorem:fcfreecharacterisation}.

Since phased SI is stronger than phased block-gluing, clearly (\ref{enum:phased SI}) implies (\ref{enum:phased block-gluing}) and by Proposition \ref{Proposition: phased SI for dismantlable graphs}, (\ref{enum: bipartite dismantlable}) implies (\ref{enum:phased SI}). To complete the proof of the theorem we need to prove (\ref{enum:phased block-gluing}) implies (\ref{enum: bipartite dismantlable}). For this we need to introduce the universal cover. For more details, look at \cite{MR3645513} and references within (mainly \cite{Angluin80,Stallingsgraph1983}).

A graph homomorphism $\phi: \H' \longrightarrow \H$ is called a \emph{graph covering} if it is surjective and for all $v\in \H$, the restricted map $\phi|_{N_{\H'}(v)}$ is bijective onto $N_\H(\phi(v))$; the induced map from $X^d_{\H'}$ to $X^d_{\H}$ is denoted by $\t\phi$. There is some subtlety here. Undirected graphs $\H$ can be viewed as $1$-CW-complexes where the vertices form $0$-cells and the edges form the $1$-cells of the complex. If $\H$ has no self-loops, then clearly the condition for a map $\phi:\H'\longrightarrow \H$ to be a graph covering implies that it is a topological covering as well. However a topological covering space of a graph $\H$ viewed as a $1$-CW-complex may be different from the covering graph of $\H$ when $\H$ has a self-loop. For instance, let $\H$ be a graph with a single vertex and a self-loop and $\H'$ be a graph with exactly one edge connecting two vertices; $\H'$ is a covering graph of $\H$ however $\H$ is homeomorphic to $S^1$ as a CW-complex and its only covering spaces are itself and $\R$; neither of these are homeomorphic to $\H'$. 

To avoid confusion, by a covering space of $\H$ we mean the usual topological covering space of $\H$ and by a covering graph of $\H$ we mean it in the sense as defined above; these two notions coincide if $\H$ has no self-loops.

A \emph{universal covering graph} of $\H$, denoted by $\H_{uni}$ is a covering graph of $\H$ which is a tree; this is unique up to graph isomorphism. Alternatively it can be defined as the connected covering graph $(\H_{uni}, \phi_{uni})$ satisfying the following (universal) property: Given a covering graph map $\phi: \H'\longrightarrow \H$ there exists a covering graph map $\phi':\H_{uni}\longrightarrow \H'$ such that $\phi\circ \phi'=\phi_{uni}$. There is an explicit construction of these graphs: A \emph{non-backtracking walk} in a graph $\H$ is a finite walk in which subsequent steps do not use the same edge, that is, walks $p_1, p_2, \ldots, p_n$ such that $(p_i, p_{i+1})\neq (p_{i+2}, p_{i+1})$. Fix a vertex $v\in \H$. $\H_{uni}$ is the graph where the vertex set is the set of non-backtracking walks in $\H$ starting at the vertex $v$ and two non-backtracking walks $p$ and $q$ are adjacent in the graph if one extends the other by a single step. Choosing a different starting vertex $v$ gives us a graph isomorphic to $\H_{uni}$. It is a tree and the covering graph map $\phi_{uni}: \H_{uni}\longrightarrow \H$ is given by 
$$\phi_{uni}(p):=\text{terminal vertex of }p.$$

Let us look at a few examples: Non-backtracking walks in a tree cannot visit the same vertex twice and there is a unique non-backtracking walk joining two distinct vertices. Hence the universal cover of a tree $\H$ is isomorphic to $\H$. The non-backtracking walks in the graph $C_n$ starting at $0$ are the finite prefixes of the periodic walks
$$0,1, 2, 3, \ldots, n-1, 0, 1, \ldots,\text{ and }0,n-1, n-2, \ldots, 1, 0, n-1, \ldots,$$
Thus the universal covering graph of $C_n$ is $\Z$ and the covering graph map is $\Mod n:\Z\longrightarrow C_n$. 

Another important class of examples are the \emph{barbell graphs} $Bar_n$ for $n>2$ with vertices $\{1,2,3,\ldots,n\}$ and edges $\{(1,1), (1,2),(2,3),\ldots,(n-1,n),(n,n)\}$ (Figure \ref{figure:4-barbell}). The non-backtracking walks on $Bar_n$ starting at $1$ are the finite prefixes of the periodic walks
$$(1,1,2,3,\ldots, n-1, n,n, n-1, n-2, \ldots,2,1,1,\ldots )\text{ and }(1,2,3,\ldots, n-1, n,n, n-1, n-2, \ldots,2,1,1,\ldots)$$ 
proving that $(Bar_n)_{uni}=\Z$.
Thus though the cycles $C_n$ and the barbells $Bar_n$ seem unrelated a priori, their universal covers are the same. By Proposition \ref{prop:four-cycle hom-free graph lift to universal covers} it will follow that the corresponding hom-shifts are related to each other. The fact that $Bar_n$ does not satisfy the block-gluing property has been essentially observed in \cite{Raimundopavlov2016}.
\begin{figure}
\includegraphics[angle=0,
width=.2\textwidth]{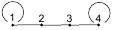}
\caption{Barbell graph for $n=4$\label{figure:4-barbell}}
\end{figure}

Let $\H$ be the graph for the hard square shift (given by Figure \ref{figure:hardsquaregraph}). The non-backtracking walks starting at the vertex $1$ are $(1)$, $(1,0)$, $(1,0,0)$ and $(1,0,0,1)$. Thus $\H_{uni}$  is isomorphic to the graph in Figure \ref{figure:liftofhardsquaregraph}.

\begin{figure}
\includegraphics[angle=0,
width=.1\textwidth]{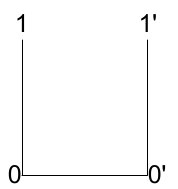}
\caption{Graph for the lift of the hard square shift\label{figure:liftofhardsquaregraph}}
\end{figure}

The universal covers of a graph are so-called normal covers \cite[Chapter 1]{hatchertopo}:

\begin{prop}\label{prop:universality of universal covers}
Let $\H$ be a finite undirected graph. For all $v',v''\in \H_{uni}$ satisfying $\phi_{uni}(v')=\phi_{uni}(v'')$ there is an automorphism $\psi$ of $\H_{uni}$ such that $\phi_{uni}\circ \psi=\phi_{uni}$ and $\psi(v')=v''$.
\end{prop}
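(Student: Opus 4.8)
The plan is to use the explicit model of $\H_{uni}$ as the graph of non-backtracking walks in $\H$. Fix the basepoint $v \in \H$ used in the construction, so that vertices of $\H_{uni}$ are non-backtracking walks starting at $v$, and $\phi_{uni}$ sends such a walk to its terminal vertex. Given $v', v'' \in \H_{uni}$ with $\phi_{uni}(v') = \phi_{uni}(v'')$, I want to build an automorphism $\psi$ of $\H_{uni}$ carrying $v'$ to $v''$ and commuting with $\phi_{uni}$. The idea is: $v'$ and $v''$ are both non-backtracking walks in $\H$ ending at the same vertex $u := \phi_{uni}(v')=\phi_{uni}(v'')$. Concatenating the reverse of $v'$ with $v''$ and reducing (removing backtracking) gives a canonical non-backtracking walk $w$ from $v'$'s terminal vertex back to $v$ and then out to $u$; more cleanly, think of $\psi$ as ``prepend/splice the deck transformation determined by the loop $v'^{-1} \ast v''$.'' Concretely, for a non-backtracking walk $p = (v = p_0, p_1, \ldots, p_m)$ representing an arbitrary vertex of $\H_{uni}$, one defines $\psi(p)$ to be the non-backtracking walk obtained by taking the walk $v'' \ast (v'^{-1} \ast p)$ (go out along $v''$, come back along the reverse of $v'$, then follow $p$) and reducing it by cancelling backtracking pairs. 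One checks this reduction is well-defined and that $\psi(v') = v''$.

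The key steps, in order: (1) Set up the ``reduce'' operation that takes an arbitrary walk in $\H$ and cancels successive backtracking steps, and verify it is well-defined (Church--Rosser / confluence of the cancellation, which for walks in a graph is elementary since cancellations of disjoint backtracking pairs commute and overlapping ones coincide). (2) Define $\psi$ as above and its candidate inverse $\psi^{-1}$ using $v' \ast (v''^{-1} \ast \cdot)$; check $\psi \circ \psi^{-1} = \psi^{-1}\circ\psi = \mathrm{id}$ using that $v'^{-1}\ast v'$ and $v''^{-1}\ast v''$ reduce to the trivial walk. (3) Check $\psi$ is a graph homomorphism: if $p, q$ are adjacent in $\H_{uni}$, i.e.\ one extends the other by a single step, then $\psi(p), \psi(q)$ differ by a single step after reduction — here one must be slightly careful that appending one step to $p$ either adds a step to the reduced form of $v'' \ast v'^{-1} \ast p$ or cancels one, in either case keeping $\psi(p)$ and $\psi(q)$ adjacent. (4) Finally verify $\phi_{uni}\circ\psi = \phi_{uni}$: the terminal vertex of the reduction of $v'' \ast v'^{-1}\ast p$ equals the terminal vertex of $p$, because reduction only removes backtracking pairs and, crucially, $v'^{-1}$ ends at $\phi_{uni}(v') = u = \phi_{uni}(v'')$ which is where $v''$ ends, so the splice point is consistent and the final vertex is unchanged. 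Evaluating at $p = v'$ gives $\psi(v')$ = reduction of $v'' \ast v'^{-1} \ast v' = $ reduction of $v''$ appended with something that fully cancels $= v''$.

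Alternatively, and perhaps more cleanly for write-up, I would invoke the standard covering-space/deck-transformation theory: since $\H_{uni}$ is a tree, it is simply connected, hence the covering $\phi_{uni}$ is the universal cover, and universal covers are normal (regular) with deck transformation group acting transitively on each fiber — this is \cite[Chapter 1]{hatchertopo}, already cited. The only wrinkle, already flagged in the excerpt, is the discrepancy between ``covering graph'' and ``topological covering space'' when $\H$ has self-loops; so I would first observe that the graph-theoretic universal cover agrees with the topological one on the subdivided graph (replace each edge, including self-loops, by a path of length $2$ or $3$ subdividing it into a genuine $1$-complex without loops), note that subdivision does not change fundamental group, normality, or the fiber of $\phi_{uni}$ over vertices, and then quote the topological statement.

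The main obstacle I expect is bookkeeping rather than conceptual: making the ``reduce/splice'' construction of $\psi$ precise and checking it respects adjacency (step 3) and the self-loop corner cases cleanly. If one takes the topological route, the main obstacle collapses to justifying carefully that the graph cover and the topological cover of the subdivision have literally the same deck group acting the same way on vertex-fibers — routine, but needs the self-loop subdivision remark to be stated explicitly.
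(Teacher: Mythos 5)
Your primary, combinatorial route is correct, but note that the paper itself gives no argument at all for this proposition: it simply cites the standard fact that universal covers are normal (\cite[Chapter 1]{hatchertopo}). So your splice-and-reduce construction is a genuinely different, self-contained proof: defining $\psi(p)$ as the reduction of $v''\ast (v')^{-1}\ast p$ on the explicit model of $\H_{uni}$ as non-backtracking walks, with the inverse given by $v'\ast (v'')^{-1}\ast\cdot$, and checking that elementary cancellations preserve both endpoints (so $\phi_{uni}\circ\psi=\phi_{uni}$) and that appending a single step to $p$ either appends to or deletes the last step of the reduced word (so adjacency is preserved). This is exactly the deck-transformation construction transplanted into the graph category, and its advantage over the citation is that it handles self-loops with no caveat, since the non-backtracking condition $(p_i,p_{i+1})\neq(p_{i+2},p_{i+1})$ and the reduction operation make sense verbatim for loops. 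One small slip in your step (4): $(v')^{-1}$ starts at $u=\phi_{uni}(v')$ and ends at the basepoint $v$; the splice is consistent because $v''$ ends where $(v')^{-1}$ begins and $(v')^{-1}$ ends where $p$ begins, not because $(v')^{-1}$ ends at $u$.

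Your alternative, topological route, however, has a genuine gap in the self-loop case, which is precisely the wrinkle the paper warns about. Subdividing edges and quoting normality of the topological universal cover proves a statement about the universal cover of the subdivided graph, but when $\H$ has self-loops the graph-theoretic cover $\H_{uni}\to\H$ is not obtained from that object: for a single vertex with a self-loop, $\H_{uni}$ is a single edge, while the subdivision is a copy of $C_3$ whose (topological and graph) universal cover is $\Z$; likewise for the hard-square graph $\H_{uni}$ is a finite path although the topological universal cover of (any subdivision of) $\H$ is infinite. So ``subdivision does not change the fiber of $\phi_{uni}$'' is false with self-loops present, and the deck group of the subdivided cover says nothing directly about automorphisms of $\H_{uni}$. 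If you want a clean write-up, drop the topological alternative (or restrict it explicitly to loop-free $\H$) and keep the combinatorial argument, which already covers all cases.
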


A \emph{lift} of a configuration $x\in X^d_\H$ is a configuration $x'\in X^d_{\H_{uni}}$ such that $\t\phi_{uni}(x')=x$.

\begin{prop}\label{prop:four-cycle hom-free graph lift to universal covers}
Let $\H$ be a four-cycle hom-free graph. For all homomorphisms $x\in X^{d-1}_\H$, there exists a unique lift $x'\in X^{d-1}_{\H_{uni}}$ up to a choice of $x'_{\m 0}$. Further the induced map $\t \phi_{uni}$ is a graph covering map from $(\H_{uni})^{d}_{walk}$ to $\H^d_{walk}$.
\end{prop}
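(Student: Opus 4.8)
The plan is to prove the two assertions separately, using the explicit description of $\H_{uni}$ as non-backtracking walks and the ``height-function'' philosophy behind four-cycle hom-free graphs. For the existence and uniqueness of the lift of a single configuration $x \in X^{d-1}_\H$: first I would fix a base point $\m 0 \in \Z^{d-1}$ and a choice of $x'_{\m 0} \in \phi_{uni}^{-1}(x_{\m 0})$, and then try to build $x'$ by induction along $\ell^1$-balls around $\m 0$ (or, equivalently, by defining $x'$ along paths in $\Z^{d-1}$ from $\m 0$ and checking well-definedness). Given a neighbour $\mi \sim \mj$ in $\Z^{d-1}$ with $x'_{\mi}$ already defined, the covering property of $\phi_{uni}$ says that $\phi_{uni}$ restricts to a bijection $N_{\H_{uni}}(x'_{\mi}) \to N_\H(x_{\mi})$, so there is a \emph{unique} vertex in $N_{\H_{uni}}(x'_{\mi})$ lying over $x_{\mj}$; this forces the value $x'_{\mj}$. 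The content is that this ``parallel transport'' is independent of the path chosen in $\Z^{d-1}$, and since $\Z^{d-1}$ is generated by unit squares, it suffices to check consistency around a single four-cycle $\mi_0 \sim \mi_1 \sim \mi_2 \sim \mi_3 \sim \mi_0$ in $\Z^{d-1}$. Here the four-cycle hom-free hypothesis on $\H$ enters decisively: the restriction $x|_{C_4}$ is a homomorphism $C_4 \to \H$, so either $x_{\mi_0} = x_{\mi_2}$ or $x_{\mi_1} = x_{\mi_3}$; in either case one checks that transporting $x'_{\mi_0}$ around the square returns the same value, because in a tree ($\H_{uni}$ is a tree) a closed non-backtracking-like excursion of length $4$ that projects to a degenerate square must itself be degenerate. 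I expect this monodromy-triviality step — formalizing ``four-cycle hom-free $\Rightarrow$ $\Z^{d-1}$-monodromy into $\H_{uni}$ is trivial'' — to be the main obstacle, and the place where one must be careful about self-loops and about the distinction between covering graphs and topological covers flagged in the text; Proposition \ref{prop:universality of universal covers} on normality of $\H_{uni}$ should help pin down the ambiguity to exactly the choice of $x'_{\m 0}$.

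Given existence and uniqueness of lifts of configurations, I would deduce the statement about $\t\phi_{uni} : (\H_{uni})^d_{walk} \to \H^d_{walk}$ in three steps. First, $\t\phi_{uni}$ is surjective: by the lifting result every $x \in X^{d-1}_\H$ has a preimage. Second, $\t\phi_{uni}$ is a graph homomorphism: if $x', y' \in X^{d-1}_{\H_{uni}}$ are adjacent in $(\H_{uni})^d_{walk}$, i.e. $x'_{\mi} \sim_{\H_{uni}} y'_{\mi}$ for all $\mi \in \Z^{d-1}$, then applying $\phi_{uni}$ (a graph homomorphism) pointwise gives $\t\phi_{uni}(x')_{\mi} \sim_\H \t\phi_{uni}(y')_{\mi}$ for all $\mi$, which is adjacency in $\H^d_{walk}$. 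Third, and this is the real covering-map condition, I must show that for each fixed $x' \in X^{d-1}_{\H_{uni}}$ the map $\t\phi_{uni}$ restricts to a \emph{bijection} from $N_{(\H_{uni})^d_{walk}}(x')$ onto $N_{\H^d_{walk}}(\t\phi_{uni}(x'))$. Write $x = \t\phi_{uni}(x')$. A neighbour of $x$ in $\H^d_{walk}$ is some $y \in X^{d-1}_\H$ with $y_{\mi} \sim_\H x_{\mi}$ for all $\mi$; I want exactly one neighbour $y'$ of $x'$ lying over it.

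For that last step I would use the lifting result again, but now lifting $y$ ``relative to'' $x'$ rather than from scratch: fix the base point and choose $y'_{\m 0}$ to be the unique neighbour of $x'_{\m 0}$ in $\H_{uni}$ lying over $y_{\m 0}$ (exists and is unique by the covering property of $\phi_{uni}$, since $y_{\m 0} \sim_\H x_{\m 0}$). By the uniqueness-up-to-basepoint part of the lifting statement, this determines a unique lift $y' \in X^{d-1}_{\H_{uni}}$ of $y$; one then checks $y'_{\mi} \sim_{\H_{uni}} x'_{\mi}$ for \emph{all} $\mi$ (not just $\mi = \m 0$) by the same square-by-square monodromy argument — indeed the pair $(x, y)$ defines a homomorphism $\Z^{d-1}\square[0,1] \to \H$, which is again four-cycle hom-free-controlled, and its lift starting at $(x'_{\m 0}, y'_{\m 0})$ is unique, hence equals the pair $(x', y')$ we built, so the edges $x'_{\mi} \sim y'_{\mi}$ are present. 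Injectivity of $\t\phi_{uni}$ on $N(x')$: if $y'$ and $y''$ are both neighbours of $x'$ lying over the same $y$, then $y'_{\m 0}, y''_{\m 0} \in N_{\H_{uni}}(x'_{\m 0})$ both lie over $y_{\m 0}$, so $y'_{\m 0} = y''_{\m 0}$ by the covering property, and then $y' = y''$ by uniqueness of lifts. This completes the verification that $\t\phi_{uni}$ is a graph covering map. The only genuinely delicate point throughout remains the monodromy-triviality lemma for four-cycle hom-free graphs; everything else is bookkeeping with the definition of covering graph.
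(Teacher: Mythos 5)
Your proposal is correct and takes essentially the same route as the paper, which for the first part invokes the path-lifting/trivial-monodromy argument of \cite[Proposition 6.2]{chandgotia2015entropyminimal} and for the covering statement uses precisely your pair-lifting device, i.e.\ viewing an edge $x\sim_{\H^d_{walk}}y$ as a homomorphism $\Z^{d-1}\square[0,1]\longrightarrow\H$ and lifting it. One small remark: the square-monodromy check needs only local bijectivity of $\phi_{uni}$ on neighbourhoods combined with the four-cycle hom-free condition (if $x_{\mi_0}=x_{\mi_2}$, the transported vertex over $x_{\mi_2}$ is forced to equal $x'_{\mi_0}$, and symmetrically in the other case), rather than the tree structure of $\H_{uni}$ that you appeal to.
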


The proof of the first part of the proposition can be found in \cite[Proposition 6.2]{MR3645513}; the proof there is for four-cycle free graphs but it carries over for four-cycle hom-free graphs. For the second part, the same approach works with the added observation that $x\sim_{\H^{d}_{walk}}y$ if and only the configuration $z:\Z^{d-1}\square[0,1]\longrightarrow \H$ given by
$$z_{\mi, t}:=\begin{cases}
x_{\mi}\text{ if }t=0\\
y_\mi\text{ if }t=1
\end{cases}$$
is a graph homomorphism.

The proposition has immediate consequences for the phased block-gluing property:
\begin{corollary}\label{corollary: universal cover infinite no block-gluing}
Let $\H$ be a four-cycle hom-free graph. Then $diam(\H^d_{walk})<\infty$ if and only if $\H_{uni}$ is finite.
\end{corollary}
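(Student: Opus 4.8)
The plan is to exploit Proposition \ref{prop:four-cycle hom-free graph lift to universal covers}, which says that $\t\phi_{uni}$ is a graph covering map from $(\H_{uni})^d_{walk}$ to $\H^d_{walk}$, and that lifts of configurations in $X^{d-1}_\H$ exist and are unique up to the choice of the base symbol. Since $\H_{uni}$ is a tree, it is four-cycle hom-free as well, so everything we know about $\H^d_{walk}$ applies to $(\H_{uni})^d_{walk}$ too. The overall strategy is: reduce the finiteness of $diam(\H^d_{walk})$ to the finiteness of $diam((\H_{uni})^d_{walk})$ via the covering map, and then argue that the latter is finite if and only if $\H_{uni}$ itself is a finite graph.

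First I would handle the direction ``$\H_{uni}$ finite $\Rightarrow$ $diam(\H^d_{walk})<\infty$''. If $\H_{uni}$ is finite it is a finite tree, hence dismantlable (repeatedly fold leaves into their neighbours), hence bipartite-dismantlable; by Proposition \ref{Proposition: phased SI for dismantlable graphs} the shift $X^d_{\H_{uni}}$ is phased SI, in particular phased block-gluing, so by Part (\ref{prop:fromshifttowalk item 2}) of Proposition \ref{prop:fromshifttowalk} $diam((\H_{uni})^d_{walk})<\infty$. Now I push this down through the covering map: given any $x,y\in X^{d-1}_\H$, lift $x$ to some $x'\in X^{d-1}_{\H_{uni}}$, and since $\t\phi_{uni}$ is a covering map of the walk-graphs, a walk from $x'$ to any preimage $y'$ of $y$ of length $diam((\H_{uni})^d_{walk})$ projects to a walk from $x$ to $y$ of the same length. (Here one must check that some preimage $y'$ of $y$ is reachable from $x'$; this is automatic because $\H_{uni}$ is connected, $\t\phi_{uni}$ surjective, and in a finite connected graph every vertex is reachable.) Hence $diam(\H^d_{walk})\le diam((\H_{uni})^d_{walk})<\infty$.

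For the converse, ``$\H_{uni}$ infinite $\Rightarrow diam(\H^d_{walk})=\infty$'', I would argue by contradiction: suppose $diam(\H^d_{walk})=n<\infty$. Using Proposition \ref{prop:universality of universal covers} (normality of the universal cover: the deck transformations act transitively on each fibre of $\phi_{uni}$), I claim $diam((\H_{uni})^d_{walk})$ is also finite. Indeed, for $x',y'\in X^{d-1}_{\H_{uni}}$, project to $x=\t\phi_{uni}(x')$, $y=\t\phi_{uni}(y')$, take a walk from $x$ to $y$ of length $\le n$, and lift it starting at $x'$ using the unique-lift property of Proposition \ref{prop:four-cycle hom-free graph lift to universal covers}; this produces a walk from $x'$ to some lift $y''$ of $y$, and a deck transformation $\psi$ with $\psi(y'')=y'$ and $\psi\circ(\text{the walk})$ gives a walk of the same length from $\psi(x')$ to $y'$; iterating / combining these (the fibre over the base vertex is countable but one only needs the particular pair $x',y'$, so a single application on the correctly chosen initial lift suffices) shows $diam((\H_{uni})^d_{walk})\le n$. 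But $\H_{uni}$ is a tree, and an infinite tree cannot have $X^d_{\H_{uni}}$ phased block-gluing: by the already-established ``finite $\Rightarrow$'' direction combined with Theorem \ref{theorem:fcfreecharacterisation} applied to $\H_{uni}$, phased block-gluing of $X^d_{\H_{uni}}$ would force $\H_{uni}$ to be bipartite-dismantlable, and an infinite tree has no finite dismantling sequence terminating in a single edge — one can make this concrete by noting that an infinite tree contains arbitrarily long induced paths, and the two endpoint-constant configurations on an induced path of length $2m$ are at walk-distance $\ge m$ in $(\H_{uni})^d_{walk}$ (a folding/height-function argument, or directly: any homomorphism from $\Z^{d-1}\square[0,k]$ interpolating these two configurations forces $k\ge m$ by monotonicity of the distance in the tree along the last coordinate). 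This contradicts $diam((\H_{uni})^d_{walk})<\infty$.

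The main obstacle I anticipate is the careful bookkeeping in the converse: making the lift-and-deck-transformation argument rigorous, in particular checking that the covering-map correspondence of Proposition \ref{prop:four-cycle hom-free graph lift to universal covers} lets one lift a \emph{walk} in $\H^d_{walk}$ (not just an edge) to a walk in $(\H_{uni})^d_{walk}$ of the same length with a prescribed starting vertex, and that the deck transformations of $\H_{uni}$ induce automorphisms of $(\H_{uni})^d_{walk}$ preserving walk-lengths. The other delicate point is the quantitative lower bound showing an infinite tree gives arbitrarily large walk-distances; the cleanest route is to invoke that an infinite tree contains induced paths $P_{2m+1}$ of every length and that, for a path graph, the distance in the walk-graph between the two ``opposite-constant'' configurations grows linearly, which is a small self-contained computation (or a direct appeal to the folding characterization, since a long path folds only down to a shorter path, never to an edge in fewer than linearly many steps).
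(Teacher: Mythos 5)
Your forward direction ($\H_{uni}$ finite $\Rightarrow diam(\H^d_{walk})<\infty$) is correct and is exactly the paper's argument: finite tree $\Rightarrow$ bipartite-dismantlable $\Rightarrow$ phased SI $\Rightarrow$ $diam((\H_{uni})^d_{walk})<\infty$, then project a walk between lifts down through $\t\phi_{uni}$. The converse, however, has a genuine gap. A graph covering lets you lift a walk in $\H^d_{walk}$ with a \emph{prescribed starting vertex} $x'$; the terminal vertex of the lift is then determined, and you cannot also prescribe it to be your given $y'$. Applying a deck transformation $\psi$ moves the whole lifted walk, so it gives a walk from $\psi(x')$ to $y'$, not from $x'$ to $y'$; choosing ``the correctly chosen initial lift'' only shows that \emph{some} lift of $x$ is within distance $n$ of $y'$. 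So the step ``$diam(\H^d_{walk})=n<\infty \Rightarrow diam((\H_{uni})^d_{walk})\le n$'' is not established, and in fact the inequality you can legitimately extract goes the other way (as in the forward direction). Concretely, for $\H=C_3$, $\H_{uni}=\Z$: the checkerboards $(0,1)^{\infty,1}$ and $(3m,3m+1)^{\infty,1}$ are at walk-distance growing with $m$ upstairs yet project to the \emph{same} configuration downstairs, so exhibiting two far-apart configurations upstairs (your long-induced-path witness) does not produce a far-apart pair downstairs. In addition, invoking Theorem \ref{theorem:fcfreecharacterisation} for $\H_{uni}$ is circular -- its implication (\ref{enum:phased block-gluing})$\Rightarrow$(\ref{enum: bipartite dismantlable}) is proved in the paper \emph{using} this corollary, and its proof is an induction on $|\H|$, so it is only available for finite graphs -- and the ``endpoint-constant'' configurations you mention do not exist in a loopless tree (you would need checkerboards on edges near the endpoints).

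The repair is to choose the downstairs witness so that its lift is far from \emph{every} lift of the target, which is what the paper does: take $x'\in X^1_{\H_{uni}}$ whose restriction to $\N$ never repeats a vertex (a ray, which exists since $\H_{uni}$ is an infinite, bounded-degree tree), set $x:=\t\phi_{uni}(x')$ and $y:=(v,w)^{\infty,1}$ for an edge $v\sim_\H w$. Any finite walk from $x$ to $y$ in $\H^2_{walk}$ lifts, starting at $x'$, to a finite walk from $x'$ to some $(v',w')^{\infty,1}$; but since $\H_{uni}$ is a tree, $d_{\H_{uni}}(x'_i,v')$ grows linearly in $i$, whereas a walk of length $k$ in $(\H_{uni})^2_{walk}$ displaces every coordinate by at most $k$, so $d^w_{\H_{uni}}(x',(v',w')^{\infty,1})=\infty$ for every such lift -- contradiction. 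One then reduces the general $d$ to $d=2$ by the monotonicity result (Proposition \ref{proposition: mononotinicity of dimensiosn}), rather than arguing upstairs in dimension $d$ directly.
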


The proof shows that $diam(\H^d_{walk})<\infty$ for some $d\geq 2$ if and only if $diam(\H^d_{walk})<\infty$ for all $d\geq 2$; look also at Subsection \ref{subsection: Dependence on Dimension}.
\begin{proof}
Suppose $\H_{uni}$ is a finite graph (and hence a finite tree). By Proposition \ref{Proposition: phased SI for dismantlable graphs} and Part (\ref{prop:fromshifttowalk item 2}) of Proposition \ref{prop:fromshifttowalk} we get that $diam((\H_{uni})^d_{walk})<\infty$. Let $x, y \in X^{d-1}_\H$ and $x',y'$ be lifts of $x,y$ in $\H_{uni}$. There is a finite walk from $x'$ to $y'$ in $(\H_{uni})^d_{walk}$. By applying the induced map $\t \phi_{uni}$ to each step of the walk we get a walk of the same length from $x$ to $y$ in $\H^d_{walk}$. Thus $diam(\H^d_{walk})\leq diam((\H_{uni})^d_{walk})<\infty$.

Now suppose that $\H_{uni}$ is an infinite graph (and hence an infinite tree). By Proposition \ref{proposition: mononotinicity of dimensiosn} it is sufficient to prove that $diam(\H^2_{walk})=\infty$. Consider $x'\in X^{1}_{\H_{uni}}$ such that $x'|_{\N}$ does not visit the same vertex twice; since $\H_{uni}$ is a bounded degree infinite graph such an $x'$ exists. Let $x:=\t\phi_{uni}(x')$ and consider $y:=(v,w)^{\infty,1}$ for some edge $v\sim_\H w$. Suppose that there is a walk from $x$ to $(v,w)^{\infty, 1}$ in $\H^2_{walk}$. By Proposition \ref{prop:four-cycle hom-free graph lift to universal covers} it lifts to a unique walk from $x'$ to $y'=(v',w')^{\infty, 1}$ in $(\H_{uni})^2_{walk}$ for some $v', w'\in \H_{uni}$. 

Let $i_0\in \N$ be such that $d_{\H_{uni}}(x'_{i_0},v'):=\min_{i \in \N}d_{\H_{uni}}(x'_i,v')=:t$. Since $\H_{uni}$ is a tree it follows that $d_{\H_{uni}}(x'_{i_0},x'_i)=i-i_0$ for all $i\geq i_0$ and in fact
$$d_{\H_{uni}}(x'_{i},v')=i-i_0+t$$
 for all for all $i\geq i_0$. Therefore $$d^w_{\H_{uni}}(x',(v',w')^{\infty,1})=\infty$$
which leads to a contradiction and completes the proof.
\end{proof}

\begin{proof}[Proof of Theorem \ref{theorem:fcfreecharacterisation}]
Let $\H$ be a four-cycle hom-free graph.We are left to prove that (\ref{enum:phased block-gluing}) implies (\ref{enum: bipartite dismantlable}). By Corollary \ref{corollary: universal cover infinite no block-gluing} it is sufficient to prove that if $\H_{uni}$ is finite then $\H$ is bipartite-dismantlable. 

Now suppose that $\H_{uni}$ is a finite tree and hence is bipartite-dismantlable. We want to prove that $\H$ is bipartite-dismantlable. Suppose $v'$ folds into $w'$ in $\H_{uni}$, that is, $N_{\H_{uni}}(v')\subset N_{\H_{uni}}(w')$. Let $v:=\phi_{uni}(v')$ and $w:=\phi_{uni}(w')$. By Proposition \ref{prop:universality of universal covers} it follows that for all $v''\in \H_{uni}$ satisfying $\phi_{uni}(v'')=v$ there is an automorphism $\psi$ of $\H_{uni}$ for which $\phi_{uni}\circ \psi=\phi_{uni}$ and $\psi(v')=v''$. Thus for $w'':=\psi(w')$ we have that $\phi_{uni}(w'')=w$ and $v''$ folds into $w''$. Since $v'$ and $w'$ have common neighbours and $\phi_{uni}$ is a covering map it follows that $v\neq w$; in fact that $v$ folds into $w$. By folding all $v''$ which satisfy $\phi_{uni}(v'')=v$ we get $(\H\setminus\{v\})_{uni}$. The proof can be completed by induction on $|\H|$.
\end{proof}

\subsection{Why is the Four-Cycle Hom-Free Condition Necessary?}\label{subsection:Why is Four-Cycle Hom-Free Condition Necessary}\hfill\\

Some of the implications of Theorem \ref{theorem:fcfreecharacterisation} fail without the four-cycle hom-free assumption. We know that (\ref{enum:phased SI}) implies (\ref{enum:phased block-gluing}) for all shift spaces and by Proposition \ref{Proposition: phased SI for dismantlable graphs}, (\ref{enum: bipartite dismantlable}) implies (\ref{enum:phased SI}). Let us see why the other implications do not hold:

\begin{enumerate}
\item
\emph{(\ref{enum:phased SI})/(\ref{enum:phased block-gluing}) does not imply (\ref{enum: bipartite dismantlable}):} Here we see why the phased SI property in hom-shifts does not imply that the corresponding graph is bipartite-dismantlable. Let $K_n$ denote the complete graph with $n$ vertices, $1, 2, \ldots, n$. It is mentioned in \cite{Raimundo2014} that $X^d_{K_n}$ is SI for $n\geq 2d+1$; note that there is no folding possible in $K_n$ and hence it is not bipartite-dismantlable (except for $n=2$). Yet $X^d_{K_n}$ is block-gluing for $n\geq 4$ and $d\in \N$; this is proved in the following proposition. The argument given here is by Ronnie Pavlov; similar arguments appear in Section 4.4 of \cite{schmidt_cohomology_SFT_1995}.

A vertex in $\Z^{d-1}$ is called \emph{even} if it is in the same partite class as $\m 0$ and \emph{odd} otherwise.
\begin{prop} \label{Prop: 4colouringfinitediameter}
For $n \geq 4$, $diam((K_n)^d_{walk})\leq 4$.
\end{prop}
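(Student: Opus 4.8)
The plan is to show that any configuration $x \in X^{d-1}_{K_n}$ can be transformed, in at most two steps of the walk graph $(K_n)^d_{walk}$, into one of a small number of standard ``target'' configurations, so that any two configurations are joined by a walk of length at most $4$. The natural targets are the two-colour checkerboard configurations $(a,b)^{\infty,d-1}$ for distinct colours $a,b$ (these lie in $X^{d-1}_{K_n}$ since adjacent sites get distinct colours). So the key claim is: for any $x \in X^{d-1}_{K_n}$ there is a checkerboard configuration $y$ with $d^w_{K_n}(x,y) \le 2$. Granting this, if $x, x' \in X^{d-1}_{K_n}$ and $y, y'$ are checkerboards with $d^w(x,y), d^w(x',y') \le 2$, it remains only to check $d^w(y, y') \le 0$ is false in general but $d^w(y,y')$ is small; in fact any two checkerboards on $\ge 3$ colours are adjacent (if $y = (a,b)^{\infty}$ and $y' = (c,d)^{\infty}$ we need $a \ne c$ and $b \ne d$ — which may fail — but since $n \ge 4$ we can interpose a third checkerboard, giving $d^w(y,y') \le 2$). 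Combining, $d^w(x,x') \le 2 + 2 + 2 = 6$; to sharpen to $4$ one instead argues that $x$ reaches a checkerboard in \emph{one} step and uses $n \ge 4$ more carefully, or reaches a \emph{constant-looking} intermediate — I'd expect the bookkeeping here to be where the constant $4$ (rather than $5$ or $6$) is won.

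The heart of the matter is the one- or two-step reduction of an arbitrary proper colouring $x$ of $\Z^{d-1}$ (with $n \ge 4$ colours) to a checkerboard. Here is how I would do it. Fix colours $1,2,3,4$. First step: define $x^1$ by recolouring every even site to colour $1$ unless its original colour was $1$, and similarly handle odd sites — more precisely, I want $x \sim_{(K_n)^d_{walk}} x^1$, which requires $x^1_{\mi} \ne x_{\mi}$ at every site. The trick is to move all even sites toward $\{1,2\}$ and all odd sites toward $\{3,4\}$: for an even site $\mi$ with $x_{\mi} \notin \{1,2\}$ set $x^1_{\mi} = 1$; if $x_{\mi} = 1$ set $x^1_{\mi} = 2$; if $x_{\mi} = 2$ set $x^1_{\mi} = 1$. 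Do the symmetric thing on odd sites with $\{3,4\}$. Then $x^1_{\mi} \ne x_{\mi}$ everywhere (so the move is legal as a step in the walk graph — one must double-check that $x^1$ is itself a proper colouring, which holds because even sites get colours in $\{1,2\}$ and odd sites get colours in $\{3,4\}$, disjoint sets), and $x^1$ now takes values in $\{1,2\}$ on the even sublattice and $\{3,4\}$ on the odd sublattice. Second step: from such an $x^1$, recolour every even site to $3$ and every odd site to $1$; this is a legal step since $x^1$ avoids $3$ on even sites and avoids $1$ on odd sites, and the result is the checkerboard $(3,1)^{\infty,d-1}$. This gives $d^w(x, (3,1)^{\infty}) \le 2$.

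Having both $x$ and $x'$ within distance $2$ of checkerboards $(3,1)^{\infty}$ (the \emph{same} target, by the construction above — I would run the identical procedure on $x'$), we immediately get $d^w(x,x') \le 2 + 2 = 4$, which is exactly the claimed bound; the fact that the procedure lands every configuration at the \emph{same} checkerboard is what avoids the extra interposition step and pins the constant at $4$. So the overall structure is: (i) define the two explicit recolourings, (ii) verify at each stage that the new configuration is a genuine element of $X^{d-1}_{K_n}$ (proper colouring) and that consecutive configurations differ at every site (legality of the walk step), (iii) conclude by concatenating the length-$2$ walk from $x$ to $(3,1)^{\infty}$ with the reverse of the length-$2$ walk from $x'$ to $(3,1)^{\infty}$. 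The main obstacle, and the only place needing care, is step (ii): ensuring the intermediate $x^1$ is simultaneously a proper colouring \emph{and} everywhere differs from $x$ — this is precisely where $n \ge 4$ is used (one needs two ``spare'' colours for the even sublattice and two more for the odd sublattice, or an argument that three suffice), and getting the case analysis on $x_{\mi} \in \{1,2,3,4\}$ versus $x_{\mi} \notin \{1,2,3,4\}$ clean is the bulk of the work.
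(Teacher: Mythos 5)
Your proposal is correct and follows essentially the same route as the paper: the identical one-step recolouring sending even sites into $\{1,2\}$ and odd sites into $\{3,4\}$, followed by a second step to the fixed checkerboard $(3,1)^{\infty,d-1}$, so that every configuration lies within distance $2$ of a common target and hence any two are within distance $4$. The exploratory hedging in your first paragraph is superseded by your own second and third paragraphs, which match the paper's argument.
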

By Proposition \ref{prop:fromshifttowalk} this implies that $X^d_{K_n}$ is block-gluing for $n\geq 4$.
\begin{proof}
Let $x\in X^{d-1}_{K_n}$. Let $y \in X^{d-1}_{K_n}$ be a homomorphism given by 
$$y_{\m i}=\begin{cases}
1\text { if }\m i \text{ is even and }x_{\m i}\neq 1\\
2\text { if }\m i \text{ is even and }x_{\m i}=1\\
3\text{ if }\m i \text{ is odd and }x_{\m i}\neq 3\\
4\text{ if }\m i \text{ is odd and }x_{\m i}=3.
\end{cases}•$$
Clearly $x\sim_{\H^d_{walk}}y$ and $y\sim_{\H^d_{walk}} (3,1)^{\infty, d-1}$ (the checkerboard pattern in $3$ and $1$ which is 3 at $\m 0$).
Hence $d^w_{K_n}(x,(3,1)^{\infty, d-1})\leq 2$. Hence $diam((K_n)^d_{walk})\leq 4$.
\end{proof}
\item\label{item: phased block-gluing not phased SI}
\emph{(\ref{enum:phased block-gluing}) does not imply (\ref{enum:phased SI}): } Here we show the existence of a hom-shift which is phased block-gluing but not phased SI. It was mentioned to the authors by Raimundo Brice\~no \cite{Rai4cb3d} that $X^3_{K_4}$ is not phased SI (while by Proposition \ref{Prop: 4colouringfinitediameter} it is phased block-gluing). Here we shall give another example; this will be an instance of a large class of hom-shifts with the phased block-gluing property (Subsection \ref{Subsection:G-gluing property}). Let $\H$ be the graph given by Figure \ref{figure:SInotlbock}. We will prove that $X^d_{\H}$ is phased block-gluing for all $d\geq 2$ but not phased SI even for $d=2$.
\begin{figure}
\includegraphics[angle=0,
width=.5\textwidth]{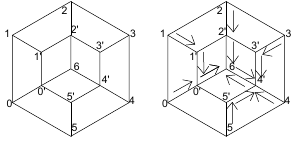}
\caption{On the left: Graph $\H$ for a hom-shift which is phased block-gluing but not phased SI. On the right: A graph homomorphism $f: \H \longrightarrow \H$ such that $f(v)\sim_\H v$ and $f^3(\H)$ is a single edge. \label{figure:SInotlbock}}
\end{figure}
Let us first observe why is $X^2_{\H}$ not phased SI. Fix $n\in \N$ and let $L$ be the shape given by
$$L:=\{(i,0), (n,i)~:~0\leq i\leq n\}.$$

Let $x\in X^2_\H$ be given by
$$x_{(j,k)}:=j+k\!\! \Mod 6.$$
Observe that for all $i\in \Z$, $i+1\!\! \Mod 6$ is the unique vertex in $\H$ adjacent to both $i\!\! \Mod 6, i+2\!\! \Mod 6$. It follows that $x_{(j+1,k)}$ is the unique vertex adjacent to $x_{(j,k)}$ and $x_{(j+1,k+1)}$ for all $(j,k)\in \Z^2$ which implies that if $y\in X^2_\H$ is a configuration such that $x|_{L}=y|_{L}$ then $x|_{[0,n]\square[0,n]}=y|_{[0,n]\square[0,n]}$. Thus $X^2_\H$ is not phased SI.
 
Now we will prove that $X^d_{\H}$ is phased block-gluing for all $d\geq 2$. Consider the map $f:\H\longrightarrow \H$ given by Figure \ref{figure:SInotlbock} and $d\geq 2$: For all $v\in \H$, $f(v)$ is defined to be the head of the arrow starting at $v$. Observe that $f$ is a graph homomorphism such that $f(v)\sim_{\H}v$ for all $v\in \H$ and $f^3(\H)$ is the edge joining vertices $4'$ and $6$. Thus for all $x\in X^{d-1}_\H$,
$f\circ x\sim_{\H^d_{walk}} x$ and $f^3\circ x$ is either $(4',6)^{\infty, d-1}$ or $(6,4')^{\infty, d-1}$ proving
$$d^w_{\H}(x, (4',6)^{\infty, d-1})\leq 4 \text{ and hence }diam(\H^d_{walk})\leq 8.$$
\end{enumerate}

\section{Further Directions}\label{section: open question}

\subsection{Decidability of the Fixed Block-Gluing Distance}\label{subsection:Decidability of Block-Gluing Distance} \hfill\\

\noindent\textbf{Question: }Fix $n \in \N$ and $d\geq 2$. Is there an algorithm to decide whether $diam(\H^d_{walk})=n$ for undirected graphs $\H$?

Let us see how such an algorithm may be constructed for certain dimensions. Fix $n\in \N$ and a graph $\H$. Recall, as in Section \ref{Section: Some Mixing Conditions for Hom-Shifts} the graph $\H^2_{n,walk}$ for which the vertices are homomorphisms from $[-n,n]$ to $\H$; two such homomorphisms $x,y$ are adjacent if $x_i\sim_{\H}y_i$ for all $i$. Consider the $d-1$ dimensional hom-shift constructed using this graph: $X^{d-1}_{\H^2_{n,walk}}$. Since this makes the notation onerous we will denote these shift spaces by $X^{d-1}_{\H,n}$. Let 
$$X^{d-1}_{\H,TB}:=\{(x,y)\in X^{d-1}_\H\times X^{d-1}_\H~:~\text{there is a walk of even length from }x_{\m 0}\text{ to }y_{\m 0}\}.$$
Observe that if $\H$ is not bipartite then $X^{d-1}_{\H, TB}=X^{d-1}_{\H}\times X^{d-1}_\H$; if it is bipartite then we further require that $x_{\m 0}$ and $y_{\m 0}$ are in the same partite class. There is a natural map from $\pi^{d-1}_{\H,n}:X^{d-1}_{\H,n}\longrightarrow X^{d-1}_{\H, TB}$ given by 
$\pi^{d-1}_{\H,n}(z):=(x,y)$ where 
\begin{eqnarray*}
x_{\mi}&:=&z_{\mi}(n)\\
y_{\mi}&:=&z_{\mi}(-n).
\end{eqnarray*}
This construction is related with the phased block-gluing property via the following proposition:
\begin{prop}\label{prop: block-gluing iff sofic at distance n}
Let $\H$ be an undirected graph. Then $X^d_{\H}$ is phased block-gluing for some block-gluing distance $2n$ if and only if the map $\pi^{d-1}_{\H,n}$ is surjective.
\end{prop}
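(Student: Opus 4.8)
The plan is to unwind the definition of $\pi^{d-1}_{\H,n}$ and match it term-by-term with the statement of Part~(\ref{prop:fromshifttowalk item 2}) of Proposition \ref{prop:fromshifttowalk}, but keeping careful track of the precise gluing distance $2n$. The key observation is that a vertex $z$ of the hom-shift $X^{d-1}_{\H,n}$ is exactly a homomorphism from $\Z^{d-1}\square[-n,n]$ to $\H$: at each $\mi\in\Z^{d-1}$ we have a homomorphism $z_\mi:[-n,n]\longrightarrow\H$ (a walk of length $2n$ in $\H$), and the adjacency condition in $X^{d-1}_{\H,n}$ — namely $z_\mi\sim_{\H^2_{n,walk}}z_\mj$ whenever $\mi\sim\mj$ — is precisely the requirement $(z_\mi)_t\sim_\H(z_\mj)_t$ for all $t\in[-n,n]$, which is the homomorphism condition on $\Z^{d-1}\square[-n,n]$. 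Under this identification, $\pi^{d-1}_{\H,n}(z)=(x,y)$ with $x=z|_{\Z^{d-1}\square\{n\}}$ and $y=z|_{\Z^{d-1}\square\{-n\}}$; so a homomorphism $\Z^{d-1}\square[-n,n]\longrightarrow\H$ is exactly (via the correspondence between homomorphisms on $\Z^{d-1}\square[0,k]$ and walks in $\H^d_{walk}$ recorded earlier in the excerpt) a walk of length $2n$ in $\H^d_{walk}$ joining $y$ to $x$.

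With this dictionary in place the proof is short. First I would check that $\pi^{d-1}_{\H,n}$ does land in $X^{d-1}_{\H,TB}$: a walk of length $2n$ in $\H^d_{walk}$ from $y$ to $x$ restricts (evaluate at $\m 0\in\Z^{d-1}$, as in the proof of Proposition \ref{proposition: mononotinicity of dimensiosn}) to a walk of even length $2n$ from $y_{\m 0}$ to $x_{\m 0}$ in $\H$, so $(x,y)\in X^{d-1}_{\H,TB}$ by definition; and the image is really all of $X^{d-1}_{\H,n}$'s data, so surjectivity of $\pi^{d-1}_{\H,n}$ is equivalent to: for every $(x,y)\in X^{d-1}_{\H,TB}$ there is a homomorphism $\Z^{d-1}\square[-n,n]\longrightarrow\H$ with the prescribed boundary, i.e.\ a walk of length exactly $2n$ in $\H^d_{walk}$ from $y$ to $x$. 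The forward direction: if $X^d_\H$ is block-gluing with gluing distance $2n$, then by Part~(\ref{prop:fromshifttowalk item 1}) of Proposition \ref{prop:fromshifttowalk} for every $x,y\in X^{d-1}_\H$ there is a walk of length $2n$ in $\H^d_{walk}$ from $y$ to $x$, which in particular produces a preimage of any $(x,y)$; note Part~(\ref{prop:fromshifttowalk item 1}) is stated for a single $n$ so applying it with $2n$ is immediate. The reverse direction: if $\pi^{d-1}_{\H,n}$ is surjective, then for every $x,y$ lying in a common class of $X^{d-1}_{\H,TB}$ there is a walk of length $2n$ from $y$ to $x$ in $\H^d_{walk}$; one then needs to produce walks of all lengths $m\geq 2n$ of the correct parity, which is done exactly as in the proof of Part~(\ref{prop:fromshifttowalk item 2}), by padding the walk using the fact that $z'\sim_{\H^d_{walk}}\sigma^{\m e^{d-1}_j}(z')$ and that there is always a walk of length $2$ from any configuration to itself, and then one glues rectangular patterns together via Proposition \ref{Proposition: periodic points in Hom shifts} as in the proof of Part~(\ref{prop:fromshifttowalk item 1}); the phase shift by a coordinate vector accounts for the bipartite case exactly as in Part~(\ref{prop:fromshifttowalk item 4}).

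The only genuinely delicate point — and the step I expect to be the main obstacle — is the bookkeeping around the constraint $X^{d-1}_{\H,TB}$ and parity. When $\H$ is not bipartite, $\H^d_{walk}$ is aperiodic, $X^{d-1}_{\H,TB}=X^{d-1}_\H\times X^{d-1}_\H$, and everything goes through cleanly; but when $\H$ is bipartite one must be careful that ``block-gluing with distance $2n$'' for $X^d_\H$ really should be read, in the bipartite case, relative to the $(2\Z)^d$-structure (cf.\ Corollary \ref{corollary:bipartite2zdactionblock}), so that the statement is about gluing patterns whose supports are separated by $2n$ and whose ``phases'' are compatible. I would make this precise by noting that $x,y\in X^{d-1}_\H$ can be joined by a walk of length exactly $2n$ in $\H^d_{walk}$ if and only if $(x,y)\in X^{d-1}_{\H,TB}$ and $2n$ is large enough — the ``large enough'' being absorbed into the hypothesis that the distance is $2n$ — and conversely surjectivity of $\pi^{d-1}_{\H,n}$ gives exactly this for all compatible pairs. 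Once the equivalence ``walk of length $2n$ between all compatible pairs in $\H^d_{walk}$'' $\Leftrightarrow$ ``$\pi^{d-1}_{\H,n}$ surjective'' is isolated, the remaining implications to and from block-gluing with distance $2n$ are routine repetitions of the arguments already given for Proposition \ref{prop:fromshifttowalk}, so I would state them briefly and refer back rather than reproving them.
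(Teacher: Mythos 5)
Your overall route is the same as the paper's: identify preimages of $\pi^{d-1}_{\H,n}$ with homomorphisms on $\Z^{d-1}\square[-n,n]$, i.e.\ with walks of length exactly $2n$ in $\H^d_{walk}$, check by a parity computation that the image of $\pi^{d-1}_{\H,n}$ lies in $X^{d-1}_{\H,TB}$, and transfer back and forth to phased block-gluing at distance $2n$ by reusing the arguments of Proposition \ref{prop:fromshifttowalk}. The converse direction (surjectivity gives length-$2n$ walks between all compatible pairs, the $\m e_1$-shift absorbing the parity mismatch, then glue as in Parts (\ref{prop:fromshifttowalk item 1}) and (\ref{prop:fromshifttowalk item 4})) matches the paper and is fine at the same level of detail as the paper's own ``by the proof of Proposition \ref{prop:fromshifttowalk}''.

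The gap is in the forward direction. As written you derive the length-$2n$ walks from Part (\ref{prop:fromshifttowalk item 1}), whose hypothesis is the \emph{unphased} block-gluing property; when $\H$ is bipartite this hypothesis never holds, and your proposed repair --- rereading ``gluing distance $2n$'' relative to the $(2\Z)^d$-action as in Corollary \ref{corollary:bipartite2zdactionblock} --- proves a modified statement rather than the one asserted, which concerns phased block-gluing of $X^d_\H$ itself (with gluing set $\{\m 0,\m e_1\}$, as permitted by Part (\ref{prop:fromshifttowalk item 4})). The paper closes exactly this point with a one-line parity exclusion for which you already have every ingredient: phased gluing at distance $2n$ yields, for each pair $(x,y)$, a walk of length $2n$ in $\H^d_{walk}$ from $x$ to $y$ or to $\sigma^{\m e_1}(y)$; if $(x,y)\in X^{d-1}_{\H,TB}$ and $\H$ is bipartite, the second alternative is impossible, because a length-$2n$ walk forces the values of its two endpoints at $\m 0$ to be joined by a walk of even length (this is precisely your computation showing that $\pi^{d-1}_{\H,n}$ maps into $X^{d-1}_{\H,TB}$), whereas $x_{\m 0}$ and $\left(\sigma^{\m e_1}(y)\right)_{\m 0}=y_{\m e_1}$ lie in opposite partite classes. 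Hence the gluing must produce an honest preimage of $(x,y)$, with no reinterpretation of the hypothesis; adding this exclusion step turns your sketch into the paper's proof.
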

\begin{proof}
By the proof of Proposition \ref{prop:fromshifttowalk}, $X^d_{\H}$ is phased block-gluing for distance $2n$ if and only if for all $x,y\in X^{d-1}_\H$ there exists a walk either from $x$ to $y$ or from $x$ to $\sigma^{\m e_1} (y)$ of length $2n$; equivalently, for all $x,y\in X^{d-1}_\H$ there exists $z\in X^{d-1}_{\H,n}$ such that either $\pi^{d-1}_{\H,n}(z)=(x,y)$ or $\pi^{d-1}_{\H,n}(z)=(x,\sigma^{\m e_1}(y))$. Consider a pair $(x',y')\in X^{d-1}_{\H,TB}$. The distance between $x'$ and $y'$ is even. Hence for $z'\in X^{d-1}_{\H,n}$, $\pi^{d-1}_{\H,n}(z')\neq (x,\sigma^{\m e_1}(y))$. Thus there exists $z''\in X^{d-1}_{\H,n}$ such that $\pi^{d-1}_{\H,n}(z'')=(x,y)$ completing the proof.
\end{proof}
\begin{thm}\label{thm: decidability in 2 dimensions}
It is decidable whether a hom-shift in two dimensions is block-gluing for distance $n$. 
\end{thm}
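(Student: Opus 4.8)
The plan is to reduce the question ``is $X^d_\H$ block-gluing for distance $n$?'' in dimension $d=2$ to a decidable property of a one-dimensional sofic shift, using Proposition~\ref{prop: block-gluing iff sofic at distance n}. First I would dispose of the parity issue: $X^2_\H$ is block-gluing for distance $n$ if and only if it is block-gluing for distance $2\lceil n/2\rceil$ (since a walk of length $m$ can always be lengthened by $2$ in $\H^d_{walk}$, but lengthening by an odd amount is what parity forbids), so without loss of generality $n=2k$ is even, and we must decide whether the map $\pi^{1}_{\H,k}\colon X^{1}_{\H,k}\longrightarrow X^{1}_{\H,TB}$ is surjective. Both of these are \emph{one-dimensional} shifts of finite type (or sofic shifts): $X^1_{\H,k}$ is the vertex shift of the explicitly constructible finite graph $\H^2_{k,walk}$, and $X^1_{\H,TB}$ is an SFT on the alphabet of pairs of vertices of $\H$ lying in a common partite class, cut out by the nearest-neighbour constraint that both coordinates follow edges of $\H$ — together with the global constraint about walks of even length, which I address next.

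The second step is to realise $X^1_{\H,TB}$ itself as a sofic shift presented by an explicit finite labelled graph. The set of pairs $(a,b)$ of vertices of $\H$ joined by a walk of even length is computable (it is the reachability relation in the graph $\H\times\H$ restricted to the diagonal-parity component, or equivalently one reads off the idempotent/period structure of the adjacency matrix of $\H$); call this set $D\subseteq \V(\H)^2$. Then $X^1_{\H,TB}$ is exactly the $\Z$-SFT on alphabet $D$ whose allowed transitions $(a,b)\to(a',b')$ are those with $a\sim_\H a'$ and $b\sim_\H b'$ — one checks that a bi-infinite sequence of such transitions automatically keeps $(x_i,y_i)\in D$ for all $i$ once it holds for one $i$, because edges preserve the bipartite class in the relevant sense, so $D$-membership propagates; hence this is a genuine finite-type description and is directly constructible from $\H$. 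Similarly the image $\pi^{1}_{\H,k}(X^1_{\H,k})$ is the image of an SFT under a $1$-block code, hence a sofic shift, for which a finite presentation (the label graph obtained by labelling each vertex $z$ of $\H^2_{k,walk}$ by the pair $(z(k),z(-k))$) is again explicitly constructible.

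The third and final step is the decidability of sofic-shift containment/equality in one dimension. Surjectivity of $\pi^{1}_{\H,k}$ is the statement $\pi^{1}_{\H,k}(X^1_{\H,k}) = X^1_{\H,TB}$, i.e.\ equality of two one-dimensional sofic shifts each given by an explicit finite presentation; equivalently, since $\pi^1_{\H,k}(X^1_{\H,k})\subseteq X^1_{\H,TB}$ always holds, it is the containment $X^1_{\H,TB}\subseteq \pi^1_{\H,k}(X^1_{\H,k})$. Language inclusion for sofic shifts (equivalently, for regular languages given by finite automata) is decidable — pass to deterministic presentations by the subset construction and test emptiness of the difference — so the whole procedure terminates. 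I expect the only real subtlety, and the step to write out carefully, to be the second one: verifying that $X^1_{\H,TB}$ as defined by the \emph{global} even-walk condition really does coincide with the \emph{local} SFT on the alphabet $D$ (that the parity constraint, once imposed at a single coordinate, is preserved by all admissible transitions), together with the bookkeeping that keeps everything uniformly constructible from $\H$; the reduction to $n$ even and the appeal to decidability of regular-language inclusion are then routine.
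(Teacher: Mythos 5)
Your treatment of the even case is essentially the paper's own argument: invoke Proposition~\ref{prop: block-gluing iff sofic at distance n} to reduce to surjectivity of $\pi^{1}_{\H,k}$, observe that $X^{1}_{\H,TB}$ is a one-dimensional SFT and that the image of $\pi^{1}_{\H,k}$ is a sofic shift with an explicitly constructible presentation, and then use decidability of equality (or, as you note, of one containment, since $\pi^{1}_{\H,k}(X^1_{\H,k})\subseteq X^1_{\H,TB}$ always holds) for one-dimensional sofic shifts. The extra detail you supply -- the alphabet $D$ of even-distance pairs, the check that membership in $D$ propagates along admissible transitions, and the labelled-graph presentation of the image -- is exactly the bookkeeping the paper leaves implicit, and it is correct.

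The genuine gap is your first step, the reduction to even $n$. The claim that $X^2_\H$ is block-gluing for distance $n$ if and only if it is block-gluing for distance $2\lceil n/2\rceil$ is false for odd $n$: being block-gluing with gluing distance $m$ is monotone in the wrong direction for your argument (distance $m$ implies distance $m+1$, never conversely), and the ``lengthen a walk by $2$'' observation only shows that the set of valid gluing distances is upward closed, not that odd and even thresholds coincide. A concrete counterexample, already in the paper, is the hard square shift with $n=1$: it is block-gluing (indeed SI) for distance $2$, but two patterns each consisting of a single $1$ on adjacent sites are at $d_\infty$-distance $1$ and cannot be glued, so it is not block-gluing for distance $1$. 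Hence deciding the property at odd $n$ is not the same question as at $n+1$, and your algorithm gives the wrong answer there. The fix (this is what the paper's ``for odd $n$, the proof is similar'' amounts to) is to run the same machinery with an odd-length window: take the vertex shift on homomorphisms from an interval of length $n$ to $\H$, map each such configuration to its pair of boundary rows, and compare the resulting sofic image with the SFT of pairs $(x,y)\in X^1_\H\times X^1_\H$ whose zero coordinates are joined by a walk of odd length (the odd analogue of $X^1_{\H,TB}$, presented over the computable alphabet of odd-distance pairs exactly as you did for $D$); equality of these two effectively presented one-dimensional sofic shifts is again decidable.
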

Recall, a shift space is called a \emph{sofic shift} if it is the image of an SFT under a sliding block-code.
\begin{proof}We will verify this only in the case when $n$ is even; for odd $n$, the proof is similar. By Proposition \ref{prop: block-gluing iff sofic at distance n} it is equivalent to verify that $Image(\pi^{1}_{\H,\frac{n}{2}})=X^{1}_{\H, TB}$. Now $X^{1}_{\H, TB}$ is an SFT (and hence sofic) and $Image(\pi^{1}_{\H,\frac{n}{2}})$ is sofic; there are well known algorithms to decide whether two sofic shifts are the same (\cite[Theorem 3.4.13]{LM}). This proves that it is decidable whether a hom-shift in two dimension is block-gluing for block-gluing distance $n$.
\end{proof}

Since it is undecidable whether a higher dimensional SFT is non-empty it automatically follows that that it is undecidable whether two $d-1$ dimensional sofic shifts are equal for $d\geq 3$. However even for $d=2$ we do not know the answer to the following questions:
\\

\noindent\textbf{Question:} Fix $n\in \N$. Is it decidable whether the SI gluing distance for a hom-shift is less than or equal to $n$?
\\

\noindent\textbf{Question:} Is the phased block-gluing/phased SI property decidable for hom-shifts?

\subsection{The gluing property for general boards $\G$}\label{Subsection:G-gluing property} \hfill\\

Our construction of the graph $\H^d_{walk}$ was motivated by the study of the block-gluing property. The question whether $diam(\H^d_{walk})<\infty$ can be viewed as a certain `reconfiguration' problem. A natural extension of the question is the following: Let $\G$ be a connected undirected graph without self-loops. Consider the graph
$$\H^\G_{walk}:=(Hom(\G, \H), \E1^\G_{walk})\text{ where }\E1^\G_{walk}:=\{(x,y)~:~x_i\sim_{\H}y_i\text{ for all }i \in \G\}.$$
\textbf{Question: }For which graphs $\H$ is $diam(\H^\G_{walk})<\infty$ for all undirected graphs $\G$?

For a reconfiguration problem of a similar nature, a characterisation was given in \cite{brightwell2000gibbs}: We say that $Hom(\G,\H)$ satisfies the pivot property if for all $x, y\in Hom(\G, \H)$ which differ only at finitely many sites there exists a sequence $x=x^1, x^2, \ldots, x^n=y\in Hom(\G, \H)$ such that $x^i, x^{i+1}$ differ at most at one site. Brightwell and Winkler proved that the pivot property is satisfied by $Hom(\G, \H)$ for all graphs $\G$ if and only if $\H$ is dismantlable. We wonder if a characterisation of similar nature exists in our case as well. In the following we provide a large class of graphs $\H$ for which $diam(\H^\G_{walk})<\infty$ for all connected undirected graphs $\G$.

We say that $\H$ is \emph{collapsible} if there exists a graph homomorphism $f:\H\longrightarrow \H$ such that $f(v)\sim_{\H}v$ for all $v\in \H$ and there exists $n \in \N$ such that $f^n(\H)$ is either an edge or a vertex with a self-loop; $f$ is called a \emph{collapsing map}. If $\H$ is a collapsible graph,  $diam(\H^\G_{walk})<\infty$ for all graphs $\G$ ((\ref{item: phased block-gluing not phased SI}), Subsection \ref{subsection:Why is Four-Cycle Hom-Free Condition Necessary}).

While one may feel that the proof that $diam((K_n)^d_{walk})<\infty$ for all $n \geq 4$ in Proposition \ref{Prop: 4colouringfinitediameter} is of a very different nature from that for the collapsible graphs, it can be shown that they are intimately related. Consider the covering graph map $\phi:\H\longrightarrow K_4$ given by $\phi(v')=\phi(v'')=v$ for all $v\in [1,4]$ where $\H$ is given by Figure \ref{figure:cover of K4}. As in Proposition \ref{prop:four-cycle hom-free graph lift to universal covers}, it is easy to see that for all homomorphisms $x\in X^{d-1}_{K_4}$, there exists a unique lift $x'\in X^{d-1}_{\H}$ up to a choice of $x'_{\m 0}$. Further the induced map $\t \phi$ is a graph covering map from $(\H)^{d}_{walk}$ to $(K_4)^d_{walk}$. One can thereby conclude that $diam(\H^{d}_{walk})<\infty$ if and only if $diam((K_4)^d_{walk})<\infty$. But the map $f:\H\longrightarrow \H$ given by Figure \ref{figure:cover of K4} is a collapsing map proving that $diam((K_4)^d_{walk})<\infty$.

\begin{figure}
\includegraphics[angle=0,
width=.5\textwidth]{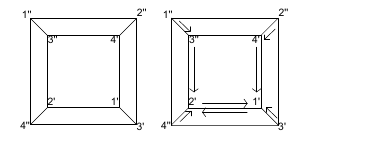}
\caption{A cover of $K_4$ on the left and its collapsing map on the right.\label{figure:cover of K4}}
\end{figure}

\subsection{The growth rate of the diameter of $\H^d_{n ,walk}$}\label{subsection: growth of radius} \hfill\\

We write that a sequence $a_n=\Theta(n)$ if there exists $c, C>0$ such that $cn\leq a_n\leq Cn$. 
\\

\noindent\textbf{Conjecture:} If $\H$ is a finite undirected graph $diam(\H^d_{walk})=\infty$ if and only if $diam(\H^d_{n,walk})=\Theta(n)$.

This was also conjectured by Ronnie Pavlov and Michael Schraudner who showed that this is true in several examples \cite{schraudnerpavlovblockgluingdist}. From Equation \ref{equation: nwalk d to d+1} we get a natural upper bound on the diameter:
$$diam(\H^{d}_{n,walk})\leq diam(\H)+2n(d-1).$$
If $\H$ is a four-cycle hom-free graph and $d\geq 2$ then it can be proved that $diam(\H^d_{walk})=\infty$ if and only if $diam(\H^{d}_{n,walk})=\Theta(n)$. We will prove the conjecture in the case when $\H$ is a four-cycle hom-free graph.

Suppose that $diam(\H^{d}_{n,walk})=\Theta(n)$. Since $diam(\H^{d}_{n,walk})$ is increasing in $n$ and converges to $diam(\H^d_{walk})$, it follows that $diam(\H^d_{walk})=\infty$. 

For the other direction assume that $diam(\H^d_{walk})=\infty$. Since $diam(\H^d_{n, walk})$ is increasing in $d$, it is sufficient to prove that $diam(\H^{d}_{n,walk})=\Theta(n)$ for $d=2$. By Corollary \ref{corollary: universal cover infinite no block-gluing}, $\H_{uni}$ is infinite. As in the proof of corollary let $x'\in X^1_{\H_{uni}}$ be such that $x'|_{\N}$ does not visit the same vertex twice and let $x:=\t\phi_{uni}(x')$. Then $d_{\H_{uni}}(x'_i, x'_j)=|i-j|$ for all $i, j \in \N$ implying that for all vertices $v'\in \H_{uni}$, there exists $i\in [0,2n]$ such that $d_{\H_{uni}}(x_i,v')\geq n$. This implies that the shortest walk in $\H^d_{n,walk}$ from $x|_{[0,2n]}$ to $(v,w)^{\infty,1}|_{[0,2n]}$ for all edges $v\sim_{\H}w $ is of length at least $n$. This proves that $diam(\H^{2}_{n,walk})=\Theta(n)$.

\subsection{Dependence on dimension} \label{subsection: Dependence on Dimension} \hfill\\

\noindent\textbf{Problem: } Construct a graph $\H$ for which $diam(\H^2_{walk})<\infty$ but $diam(\H^3_{walk})=\infty$.

In this paper we mention two large collection of graphs for which the $diam(\H^d_{walk})<\infty$ for all $d$: bipartite-dismantlable graphs (as in Section \ref{section:Phased Mixing Properties for $C_4$-Hom-Free Graphs}) and collapsible graphs (as in 
Subsection \ref{Subsection:G-gluing property}). However in all such examples, we find that $diam(\H^d_{walk})<\infty$ for all $d$. To find examples for the problem above, we would have to find a way to prove that $diam(\H^2_{walk})<\infty$ in a fundamentally different way.

By Proposition \ref{prop:fromshifttowalk}, the problem stated above is equivalent to the problem of finding a graph $\H$ for which $X^2_\H$ is block gluing but $X^3_\H$ is not block-gluing. We note that the answer to the analogue of this problem for SI is known: $X^2_{K_4}$ is SI \cite{Raimundo2014} but $X^3_{K_4}$ is not SI \cite{Rai4cb3d}.

\subsection{Block-gluing for periodic points}\label{subsection: block-gluing for periodic points}\hfill\\

\noindent\textbf{Problem: }Construct a graph $\H$ such that $d^w_\H(x,y)<\infty$ for all periodic points $x,y\in X^{d-1}_\H$ but $diam(\H^d_{walk})=\infty$.

If $diam(\H^d_{walk})=\infty$, by Proposition \ref{prop: disconnection of graphs} there exists some $x,y\in X^{d-1}_\H$ such that $d^w_\H(x,y)=\infty$ however it is not clear if $x,y$ can be chosen periodic. Such periodic points can be chosen if $\H$ is four-cycle free: By Corollary \ref{corollary: universal cover infinite no block-gluing}, $\H_{uni}$ is infinite and $\H$ is not a tree. Let $u_0,u_1, \ldots, u_{k-1}, u_k=u_0$ be a simple cycle in $\H$ for some $k>2$. Consider $x\in X^2_\H$ given by $x_i:=u_{i\Mod{k}}$ for all $i\in \Z$; $x$ is periodic. Let $x'\in X^1_{\H_{uni}}$ be any lift of $x$. Since $x_i\neq x_{i+2}$ for all $i \in \Z$ it follows that $x'_i\neq x'_{i+2}$ for all $i \in \Z$; because $\H_{uni}$ is a tree, this implies that $x'$ does not visit the same vertex twice. As in the proof of Corollary \ref{corollary: universal cover infinite no block-gluing} it follows that $d^w_\H(x,(v,w)^{\infty,1})=\infty$ for all $v\sim_\H w$.

\subsection{Measures of maximal entropy and Markov chains on $\H^2_{walk}$}\label{subsection: measures of maximal entropy}\hfill\\

 Given a shift space $X$ and $b\in \L_B(X)$ for some $B\subset \Z^2$, denote by 
$$[b]_B:=\{x\in X~:~x|_B=b\}\text{ the corresponding cylinder set}.$$ One of the motivations for studying the graph $\H^d_{walk}$ is also to understand the measures of maximal entropy on the space $X^d_{\H}$. Let us talk about the case $d=2$. There is a natural correspondence between stochastic processes $\nu$ on $\H^2_{walk}$ and probability measures $\mu$ on $X^2_\H$ given by
$$\nu(X^i_j=a_{i,j}\text{ for }(i,j)\in B):=\mu([a]_B)\text{ for }B\subset{\Z^2}\text{ finite and }a\in \L_B(X^2_\H).$$

For this subsection the necessary background for measures of maximal entropy can be gathered from \cite{Rue,burtonsteiffnonuniquesft} and for Markov chains from \cite[Chapter 6]{PTEdurret}. 
 Let $\H$ be a finite undirected graph and $\mu$ be an ergodic measure of maximal entropy for $X^2_\H$. Consider the Markov chain $\nu$ on $\H^2_{walk}$ obtained by the ``Markovisation'' of $\mu$ (look also at \cite[Chapter 1]{boweneqlecturenotes}):  Let $\pi$ be the probability measure on $X^1_\H$ given by marginalising $\mu$ to the vertical line $\{0\}\square \Z$. Consider the probability (also called Markov) kernel on $(\H^2_{walk}, \B)$, $\kappa: X^1_{walk}\times \B\longrightarrow [0,1]$ given by
$$\kappa(x,[y]_{-n,n}):=\mu(X_{(1,i)}=y_i\text{ for }i\in [-n,n]\vert X_{(0,i)}=x_i\text{ for }i\in\Z);$$
it is well-defined for $\pi$-almost every $x$.

Since $\mu$ is a shift-invariant probability measure it follows that $\pi$ is a stationary measure for the kernel $\kappa$. It can be proved that the measure $\t \mu$ on $X^2_\H$ corresponding to the Markov chain $\nu$ is also a measure of maximal entropy.
\\

\noindent\textbf{Conjecture: } Let $\H$ be a finite undirected graph and $\mu$ be an ergodic measure of maximal entropy on $X^2_\H$. Then the stochastic process on $\H^2_{walk}$ corresponding to $\mu$ is a Markov chain.

A study of random walks on the graph $(C_3)^2_{n,walk}$ can be found in \cite{norrisdiffuse2015}. 

\subsection{When is an SFT conjugate to a hom-shift}\hfill\\
\label{subsection: homshift sft conjugate}
\smallskip

\noindent\textbf{Question:} Let $d=1$. Is it decidable whether an SFT is conjugate to a hom-shift?

For $d\geq 2$ we have already observed in Corollary \ref{corollary: decidability conjugate to hom-shift} that it is undecidable whether an SFT is conjugate to a hom-shift.

\section{Acknowledgements}
The first author would like to thank Prof. Neeldhara Mishra for introducing him to \cite{Marcinfourcyclefree2014} and Marcin Wroncha for further discussions regarding the four-cycle hom-free condition. He would like to thank Prof. Yuval Peres for introducing him to \cite{norrisdiffuse2015}. He would like to thank Prof. Peter Winkler for hosting him and discussing the open problems as stated in Section \ref{section: open question}. He would also like to thank Prof. Lior Silberman, Prof. Omer Angel for many lively discussions and Raimundo Brice\~no, Prof. Michael Schraudner and Prof. Ronnie Pavlov for giving a patient ear to his ideas. Part of this research was funded by ERC starting grant LocalOrder, Israel Science Foundation grant 861/15, 7599/13 and ICERM. Both the authors would like to thank the careful referee whose comments greatly improved the quality of the exposition.
\bibliographystyle{abbrv}
\bibliography{blockgluing}

\begin{thebibliography}{10}

\bibitem{Angluin80}
D.~Angluin.
\newblock Local and global properties in networks of processors (extended
  abstract).
\newblock In {\em Proceedings of the 12th Annual {ACM} Symposium on Theory of
  Computing, April 28-30, 1980, Los Angeles, California, {USA}}, pages 82--93,
  1980.

\bibitem{bergerundecidable}
R.~Berger.
\newblock The undecidability of the domino problem.
\newblock {\em Mem. Amer. Math. Soc. No.}, 66:72, 1966.

\bibitem{norrisdiffuse2015}
E.~Boissard, S.~Cohen, T.~Espinasse, and J.~Norris.
\newblock Diffusivity of a random walk on random walks.
\newblock {\em Random Structures Algorithms}, 47(2):267--283, 2015.

\bibitem{boweneqlecturenotes}
R.~Bowen.
\newblock {\em Equilibrium states and the ergodic theory of {A}nosov
  diffeomorphisms}, volume 470 of {\em Lecture Notes in Mathematics}.
\newblock Springer-Verlag, Berlin, revised edition, 2008.
\newblock With a preface by David Ruelle, Edited by Jean-Ren{\'e} Chazottes.

\bibitem{boyle2010multidimensional}
M.~Boyle, R.~Pavlov, and M.~Schraudner.
\newblock Multidimensional sofic shifts without separation and their factors.
\newblock {\em Trans. Amer. Math. Soc.}, 362(9):4617--4653, 2010.

\bibitem{Rai4cb3d}
R.~Brice\~no.
\newblock Personal communication, 2014.

\bibitem{Raimundo2014}
R.~Brice{\~n}o.
\newblock The topological strong spatial mixing property and new conditions for
  pressure approximation.
\newblock http://arxiv.org/abs/1411.2289, 2014.

\bibitem{Raimundopavlov2016}
R.~Brice{\~n}o and R.~Pavlov.
\newblock Strong spatial mixing in homomorphism spaces.
\newblock http://arxiv.org/abs/1510.01453, 2015.

\bibitem{brightwell2000gibbs}
G.~R. Brightwell and P.~Winkler.
\newblock Gibbs measures and dismantlable graphs.
\newblock {\em J. Combin. Theory Ser. B}, 78(1):141--166, 2000.

\bibitem{burtonsteiffnonuniquesft}
R.~Burton and J.~E. Steif.
\newblock Non-uniqueness of measures of maximal entropy for subshifts of finite
  type.
\newblock {\em Ergodic Theory Dynam. Systems}, 14(2):213--235, 1994.

\bibitem{MR3645513}
N.~Chandgotia.
\newblock Four-cycle free graphs, height functions, the pivot property and
  entropy minimality.
\newblock {\em Ergodic Theory Dynam. Systems}, 37(4):1102--1132, 2017.

\bibitem{PTEdurret}
R.~Durrett.
\newblock {\em Probability: theory and examples}.
\newblock Cambridge Series in Statistical and Probabilistic Mathematics.
  Cambridge University Press, Cambridge, fourth edition, 2010.

\bibitem{hatchertopo}
A.~Hatcher.
\newblock {\em Algebraic topology}.
\newblock Cambridge University Press, Cambridge, 2002.

\bibitem{lieb}
E.~Lieb.
\newblock Residual entropy of square ice.
\newblock {\em Physical Review}, 162:162 -- 172, 1967.

\bibitem{LM}
D.~Lind and B.~Marcus.
\newblock {\em An introduction to symbolic dynamics and coding}.
\newblock Cambridge University Press, 1995, reprinted 1999.

\bibitem{Nowwinkler}
R.~Nowakowski and P.~Winkler.
\newblock Vertex-to-vertex pursuit in a graph.
\newblock {\em Discrete Math.}, 43(2-3):235--239, 1983.

\bibitem{schraudnerpavlovblockgluingdist}
R.~Pavlov and M.~Schraudner.
\newblock Personal communication, 2015.

\bibitem{Robinson1971}
R.~M. Robinson.
\newblock Undecidability and nonperiodicity for tilings of the plane.
\newblock {\em Invent. Math.}, 12:177--209, 1971.

\bibitem{Rue}
D.~Ruelle.
\newblock {\em Thermodynamic formalism}.
\newblock Cambridge Mathematical Library. Cambridge University Press,
  Cambridge, second edition, 2004.
\newblock The mathematical structures of equilibrium statistical mechanics.

\bibitem{schmidt_cohomology_SFT_1995}
K.~Schmidt.
\newblock The cohomology of higher-dimensional shifts of finite type.
\newblock {\em Pacific J. Math.}, 170(1):237--269, 1995.

\bibitem{schimdt_fund_cocycle_98}
K.~Schmidt.
\newblock Tilings, fundamental cocycles and fundamental groups of symbolic
  {${\Bbb Z}^d$}-actions.
\newblock {\em Ergodic Theory Dynam. Systems}, 18(6):1473--1525, 1998.

\bibitem{Stallingsgraph1983}
J.~R. Stallings.
\newblock Topology of finite graphs.
\newblock {\em Invent. Math.}, 71(3):551--565, 1983.

\bibitem{Marcinfourcyclefree2014}
M.~Wrochna.
\newblock {Homomorphism Reconfiguration via Homotopy}.
\newblock In E.~W. Mayr and N.~Ollinger, editors, {\em 32nd International
  Symposium on Theoretical Aspects of Computer Science (STACS 2015)}, volume~30
  of {\em Leibniz International Proceedings in Informatics (LIPIcs)}, pages
  730--742, Dagstuhl, Germany, 2015. Schloss Dagstuhl--Leibniz-Zentrum fuer
  Informatik.

\end{thebibliography}

\end{document}